\documentclass[12pt,reqno]{amsart}
\usepackage[T1]{fontenc} 
\usepackage{amsmath,amssymb,ifthen}
\usepackage{fullpage,enumerate}
\usepackage{graphicx,psfrag,subfigure}
\usepackage{color}
\usepackage{moreverb}

\def\Tref{T_{\rm ref}}
\def\H{\widetilde{H}}
\def\P{{\mathbb P}}
\def\R{{\mathbb R}}
\def\N{{\mathbb N}}

\def\PP{{\mathcal P}}

\def\SS{{\mathcal S}}
\def\TT{{\mathcal T}}

\def\diam{{\rm diam}}

\def\norm#1#2{\|#1\|_{#2}}
\def\seminorm#1#2{\vert #1\vert_{#2}}
\def\abs#1{\seminorm{#1}{}}

\def\set#1#2{\big\{#1\,:\,#2\big\}}

\def\eps{\varepsilon}
\def\dual#1#2{\langle#1\,,\,#2\rangle}

\def\q{\mathbf{q}}

\def\mesh{\TT}

\def\wat#1{\widehat #1}

\def\slp{\mathfrak{V}} 
\def\dlp{\mathfrak{K}} 
\def\hyp{\mathfrak{W}} 

\newcounter{constantsnumber}
\def\namec#1#2{%
  \ifthenelse{\equal{#1}{inv}}{C_{\rm inv}}{%
  \ifthenelse{\equal{#1}{invtilde}}{\widetilde C_{\rm inv}}{%
  \ifthenelse{\equal{#1}{lipschitz}}{C_{\rm lip}}{%
  \ifthenelse{\equal{#1}{monotone}}{C_{\rm mon}}{%
  \ifthenelse{\equal{#1}{reliability}}{C_{\rm rel}}{%
  \ifthenelse{\equal{#1}{caccioppoli}}{C_{\rm cacc}}{%
  \ifthenelse{\equal{#1}{xxx}}{\widetilde C_{\rm near}}{%
  \ifthenelse{\equal{#1}{nearfield}}{C_{\rm near}}{%
  \ifthenelse{\equal{#1}{farfield}}{C_{\rm far}}{%
  \ifthenelse{\equal{#1}{cea}}{C_{\mbox{\scriptsize C\'ea}}}{%
  \ifthenelse{\equal{#2}{newcounter}}{\refstepcounter{constantsnumber}\label{const#1}}{}C_{\ref{const#1}}}%
}}}}}}}}}}
\def\setc#1{\namec{#1}{newcounter}}
\def\c#1{\namec{#1}{reference}}

\newtheorem{theorem}{Theorem}[section]
\newtheorem{proposition}[theorem]{Proposition}
\newtheorem{lemma}[theorem]{Lemma}
\newtheorem{corollary}[theorem]{Corollary}

\newtheorem{remark}[theorem]{Remark}
\newtheorem{definition}[theorem]{Definition}
\newtheorem{facts}[theorem]{Facts}

\numberwithin{equation}{section}
\newtheorem{example}[theorem]{Example}

\newcommand{\eremk}{\hbox{}\hfill\rule{0.8ex}{0.8ex}}

\def\subsection#1
{
 \bigskip

 \refstepcounter{subsection}
 {\noindent\bf\arabic{section}.\arabic{subsection}.~#1.~}
}

\def\interior{{\mathrm{int}}}

\def\e{{\rm ext}}
\def\normal{{\boldsymbol\nu}}
\def\mfrac#1#2{\mbox{$\frac{1}{2}$}}
\def\loc{{\ell oc}}

\def\gint{ {\gamma_1^{\rm int}}}
\def\gext{ {\gamma_1^{\rm ext}}}
\def\supp{{\rm supp}}
\def\trace{ {\gamma_0^{\rm int}}}
\def\trext{ {\gamma_0^{\rm ext}}}
  \def\S{\mathbb{S}}

\def\near{{\rm near}}
\def\far{{\rm far}}
\def\el{T}
\def\NN{\mathcal N}
\def\FF{\mathcal F}

\def\wght{w_\index}
\def\P{\mathbb P}

\def\meshsize{h}
\def\index{h}

\begin{document}

\title[Inverse Estimates for Boundary Integral Operators]%
{Local Inverse Estimates for\\ Non-Local Boundary Integral Operators}
\date{\today}

\author{M.~Aurada}
\author{M.~Feischl}
\author{T.~F\"uhrer}
\author{M.~Karkulik}
\author{J.\ M.~Melenk}
\author{D.~Praetorius}
\address{Institute for Analysis and Scientific Computing,
       Vienna University of Technology,
       Wiedner Hauptstra\ss{}e 8-10,
       A-1040 Wien, Austria}
       \email{\{Michael.Feischl,\,Melenk,\,Dirk.Praetorius\}@tuwien.ac.at}

\address{Facultad de Matem\'aticas, Pontificia Universidad Cat\'olica de Chile,
Avenida Vicu\~{n}a Mackenna 4860,
Santiago, Chile}
\email{\{tofuhrer,mkarkulik\}@mat.puc.cl}

\keywords{boundary element method; inverse estimates; $hp$-finite element spaces}
\subjclass[2000]{65J05, 65R20, 65N38}

\begin{abstract}
We prove local inverse-type estimates for the four non-local
boundary integral  operators associated with the Laplace operator 
on a bounded Lipschitz domain $\Omega$ in $\R^d$ for $d\ge2$ with piecewise smooth boundary.
For piecewise polynomial ansatz spaces and $d \in \{2,3\}$, the inverse
estimates are explicit in both  the local mesh width and the approximation order.
An application to efficiency estimates in {\sl a posteriori} error estimation in boundary element methods is given. 
\end{abstract}


\maketitle

\section{Introduction}

\noindent
Inverse estimates are general tools for the numerical analysis of discretizations
of partial differential equations (PDEs). They provide a means to bound a stronger
(semi-) norm of a discrete function by a weaker norm up to some negative power of the mesh width. 
For example, in the context of finite element methods, it is textbook knowledge that
\begin{align}\label{intro:fem}
 \norm{\meshsize\nabla V_\index}{L^2(\Omega)} \le C\,\norm{V_\index}{L^2(\Omega)}
 \quad\text{for all continuous $\TT_\index$-piecewise polynomials $V_\index$}.
\end{align}
The constant $C>0$ depends only on the shape regularity of the underlying
triangulation $\TT_\index$ of $\Omega\subset\R^d$ and the polynomial degree of
$V_\index$. Here, $\meshsize\in L^\infty(\Omega)$ is the local mesh width function 
defined by $\meshsize|_T := \diam(T)$ for $T\in\TT_\index$. Inverse
estimates have also been derived for fractional-order Sobolev spaces~\cite{ghs,dfghs}.
The usual proof of inverse estimates like~\eqref{intro:fem} 
relies on scaling arguments, i.e., the powers of $h$ arise by elementwise, i.e., {\em local}
considerations and transformations to reference configurations. 

In the present work we consider the four classical boundary integral operators (BIOs) associated 
with the Laplacian, e.g., the 3D simple-layer integral operator
\begin{align}
 \slp\phi(x) = \frac{1}{4\pi}\,\int_{\partial\Omega} \frac1{|x-y|}\,\phi(y)\,dy
 \quad\text{for }x\in\partial\Omega.
\end{align}
Here, $\Omega\subset\R^d$, $d\geq2$, is a bounded Lipschitz domain 
with piecewise $C^1$-boundary $\partial\Omega$. Let
$\Gamma\subseteq\partial\Omega$ be a relatively open subset of the 
boundary $\partial\Omega$.
Our main result for $\slp$ and $d\in\left\{ 2,3 \right\}$ reads, simplified,
\begin{align}\label{intro:discrete}
  \norm{\meshsize^{1/2}(p+1)^{-1}\nabla_\Gamma \slp\Phi_\index}{L^2(\Gamma)}
 \le C\,\norm{\Phi_\index}{\H^{-1/2}(\Gamma)}
\end{align}
for all $\TT_\index$-piecewise polynomials $\Phi_\index$ of degree $p\in\N_0$,
where $\nabla_\Gamma(\cdot)$ denotes the surface gradient.
The constant $C>0$ depends only on the shape regularity of the underlying 
triangulation $\TT_\index$ of $\Gamma$.
In typical settings, $\slp$ is an isomorphism between $\H^{-1/2}(\Gamma)$ and
$H^{1/2}(\Gamma)$, so that we observe that~\eqref{intro:discrete} is in fact an inverse
estimate for the finite dimensional 
space $\set{\slp\Phi_\index}{\Phi_\index\text{ a }\TT_\index\text{-piecewise polynomial of degree $p\in\N_0$}$} for the weighted 
$H^1$-seminorm and the natural $H^{1/2}$-norm. Inverse estimates of the form~\eqref{intro:discrete} will
be shown for all four BIOs associated with the Laplacian and discrete spaces with spatially varying polynomial degree,
cf. Corollary~\ref{cor:invest}.
In fact, in Theorem~\ref{thm:invest} we will show more general
results of the type
\begin{align}\label{intro:continuous}
  \norm{w_h \nabla_\Gamma \slp\phi}{L^2(\Gamma)}
  \lesssim \left\|\frac{w_h}{h^{1/2}}\right\|_{L^\infty(\Gamma)} \norm{ \phi}{\H^{-1/2}(\Gamma)} + \norm{w_h \phi}{L^{2}(\Gamma)}
\qquad \text{for all } \phi \in L^2(\Gamma), 
\end{align}
where $w_h$ is a fairly general weight function.
The correct choice of the weight function $w_h$ and an inverse estimate from~\cite{ghs,kmr14} 
for the weighted $L^2$-norm allows one to infer \eqref{intro:discrete} from \eqref{intro:continuous}.  

\subsubsection*{Applications}
The inverse-type estimate~\eqref{intro:discrete}
arises naturally in adaptive BEM (boundary element method) when one tries 
to transfer
the convergence and quasi-optimality analysis from adaptive 
FEM~\cite{ckns,stevenson} to adaptive BEM~\cite{fkmp,gantumur}.
Indeed, the present results allow us to prove quasi-optimality of adaptive BEM 
for piecewise smooth geometries
and higher (fixed) order discretizations; we refer to~\cite{part1} and~\cite{part2}, where
this is worked out in detail for weakly singular and hypersingular integral equations, respectively. 
While the inverse estimate~\eqref{intro:discrete} features prominently in the 
analysis of quasi-optimality of adaptive BEM for symmetric problems, it is also a 
key ingredient for plain convergence in non-symmetric problems such as FEM-BEM couplings. 
We refer to~\cite{fembem} and the earlier preprint \cite{invest} of the present 
work for a convergence proof of the adaptive coupling of FEM and BEM. 

A further application of estimate~\eqref{intro:continuous} concerns the
efficiency of weighted residual error estimators for both weakly singular and 
hypersingular integral equations~\cite{cc1997,cms,cmps}. To fix ideas, consider the weakly
singular case and
suppose that $\phi\in L^2(\Gamma)$ solves $\slp\phi=f$ for some given $f\in H^1(\Gamma)$. 
Let $\Phi_h$ be the Galerkin approximation of $\phi$, where the ansatz space consists of 
$\TT_h$-piecewise polynomial of fixed degree $p\in\N_0$. While reliability
\begin{align}
 C_{\rm rel}^{-1}\,\norm{\phi-\Phi_h}{\H^{-1/2}(\Gamma)} \le \eta_{h,\slp} := \norm{h^{1/2}\nabla_\Gamma(f-\slp\Phi_h)}{L^2(\Gamma)}
\end{align}
is well-known (at least for polyhedral domains $\Omega$), the converse efficiency estimate 
remained open. As a consequence of~\eqref{intro:continuous},
we will see in Corollary~\ref{cor:efficient:symm} that 
\begin{align}
 C_{\rm eff}^{-1}\eta_{h,\slp} \le \norm{h^{1/2}(\phi-\Phi_h)}{L^2(\Gamma)}, 
\end{align}
which expresses efficiency of the weighted residual error estimator with respect to 
the slightly stronger norm 
$\norm{h^{1/2}(\phi-\Phi_h)}{L^2(\Gamma)} \gtrsim \norm{\phi-\Phi_h}{\H^{-1/2}(\Gamma)}$. 
We refer to Corollary~\ref{cor:efficient:hypsing} for the case of the hypersingular operator.

These efficiency bounds are specific instances of new stability estimates for the BIOs
in locally weighted $L^2$-norms detailed in Corollaries~\ref{cor:stabV} and~\ref{cor:stabW}.

\subsubsection*{Novelty}
The discrete inequality~\eqref{intro:discrete} was first shown independently in 
\cite{fkmp} and \cite{gantumur}, however, under some restrictions. The work~\cite{fkmp} considers only
lowest-order polynomials, i.e., $\TT_\index$-piecewise constants, but works for
polyhedral boundaries $\Gamma$. The work~\cite{gantumur} 
proves~\eqref{intro:discrete} for arbitrary $\TT_\index$-piecewise polynomials, but
its wavelet-based analysis is restricted to $C^{1,1}$-boundaries 
$\Gamma$ and the constant $C>0$ depends on the polynomial degree.
Our proof of~\eqref{intro:continuous} generalizes the works \cite{fkmp,gantumur} 
in the following ways: 1) we generalize the analysis of \cite{fkmp} for the 
simple-layer operator $\slp$ to all four BIOs associated
with the Laplacian (i.e., the double-layer operator $\dlp$, its adjoint $\dlp'$, 
and the hypersingular operator $\hyp$); 2) we extend our previous analysis from 
polyhedral domains to piecewise smooth geometries; 3) we lift the restriction 
to fixed-order polynomial ansatz space and permit very general ansatz spaces; 
4) for ansatz spaces of piecewise polynomials of arbitrary order, we make the 
dependence on the  polynomial degree in the inverse estimates explicit. 

The technical difficulty in the proof of~\eqref{intro:continuous} and \eqref{intro:discrete}
lies in the non-locality of the boundary integral operator $\slp$, which precludes simple elementwise
considerations. 
We cope with the non-locality of the BIOs
by splitting them
into near-field and far-field contributions, each requiring different tools. 
The analysis of the near-field part relies on local arguments and stability 
properties of the BIOs. For the far-field part, the key observation
is that the BIOs are derived from two volume potentials, namely, the 
simple-layer potential $\widetilde \slp$ and the double-layer potential 
$\widetilde \dlp$ by taking appropriate traces. Since these potentials solve 
elliptic equations, ``interior regularity'' estimates 
are available for them and trace inequalities imply corresponding estimates 
for the BIOs. 
Section~\ref{section:invest:aux} proves the
relevant estimates for the simple-layer potential $\widetilde\slp$,
whereas Section~\ref{sec:aux-dlp} is concerned with the 
double-layer potential $\widetilde\dlp$. 
The final Section~\ref{section:proof} then combines these results to give the proof of Theorem~\ref{thm:invest}.

Although the present paper considers only the four BIOs associated with the Laplacian,
the scope is wider. As just mentioned, the key tool are interior estimates for potentials; such 
estimates are available for many elliptic equations, for example, the Lam\'e system, 
so that we expect that corresponding results can be proved as well for BIOs associated with 
these problems.

\subsubsection*{General notation}
We close the introduction by stating that $\abs{\cdot}$ denotes, depending on the context, 
the absolute value of a real number, the Euclidean norm of a vector in $\R^d$, the Lebesgue measure of
a subset of $\R^{d-1}$ or $\R^d$ 
or the $(d-1)$-dimensional surface measure of a subset of $\partial\Omega$.
The notation $a\lesssim b$ abbreviates $a\leq C\cdot b$ for some constant $C>0$ which
will be clear from the context, and we write $a\simeq b$ to abbreviate
$a\lesssim b \lesssim a$.

We write $B_r(x):=\set{ z\in\R^d}{ |x-z|\leq r}$ for the closed ball with 
radius $r$ and center $x$.


\section{Spaces, Operators, and Meshes}
\label{section:preliminaries}

\subsection{Sobolev spaces}
\label{section:invest:sobolev}
$\Omega$ is a bounded Lipschitz domain in $\R^d$, $d\ge2$, 
with piecewise $C^1$-boundary $\partial\Omega$ and corresponding exterior domain
$\Omega^\e:=\R^d\setminus\overline\Omega$. The exterior unit normal vector 
field on $\partial\Omega$ is denoted by $\normal$.  Throughout, we will assume that 
$\Gamma\subseteq\partial\Omega$ is a non-empty, relatively open set
  that stems from a Lipschitz dissection 
  $\partial\Omega = \Gamma \cup \partial\Gamma \cup (\partial\Omega\setminus\Gamma)$
as described in~\cite[pp.~99]{mclean}. Note that $\Gamma=\partial\Omega$ is valid.

The non-negative order  Sobolev spaces $H^{1/2+s}(\partial\Omega)$ for $s \in \{-1/2,0,1/2\}$ are 
defined as in \cite[pp.~{99}]{mclean} by use of Bessel potentials on $\R^{d-1}$ and lifting via the bi-Lipschitz maps that describe $\partial\Omega$. We also need the spaces $H^{1/2+s}(\Gamma)$ and 
$\H^{1/2+s}(\Gamma)$. In accordance with \cite{mclean}, these are defined as follows: 
\begin{align}
\label{eq:H1/2+s}
H^{1/2+s}(\Gamma) &:= \{v|_\Gamma \colon  v \in H^{1/2+s}(\partial\Omega)\}, \\
\label{eq:tildeH1/2+s}
\H^{1/2+s}(\Gamma) &:= \{v \colon  E_{0,\Gamma} v \in H^{1/2+s}(\partial\Omega)\}, 
\end{align}
where $E_{0,\Gamma}$ denotes the operator that extends a function defined on $\Gamma$ to a function 
on $\partial\Omega$ by zero. These spaces are endowed with their natural norms, i.e., the quotient norm 
$\|v\|_{H^{1/2+s}(\Gamma)}:= \inf \{ \|V\|_{H^{1/2+s}(\partial\Omega)}\colon V|_\Gamma = v\}$ and 
$\|v\|_{\H^{1/2+s}(\Gamma)} := \|E_{0,\Gamma} v\|_{H^{1/2+s}(\partial\Omega)}$. 
Owing to the assumption that $\partial\Omega = \Gamma \cup \partial\Gamma \cup (\partial\Omega\setminus \Gamma)$ 
is a Lipschitz dissection, we have the following facts, stated here without proof: 
\begin{facts}
\label{facts:sobolev}
\begin{enumerate}[(i)]
\item 
\label{item:facts:sobolev-i}
For $s = 1/2$, we have the norm equivalences 
$\|u\|^2_{H^1(\partial\Omega)} \simeq \|u\|^2_{L^2(\partial\Omega)} + \|\nabla_\Gamma u\|^2_{L^2(\partial\Omega)}$
and 
$\|u\|^2_{\H^1(\Gamma)} \simeq \|u\|^2_{L^2(\Gamma)} + \|\nabla_\Gamma u\|^2_{L^2(\Gamma)}$, 
where $\nabla_\Gamma$ is the (weak) surface gradient. 
\item 
\label{item:facts:sobolev-ii}
For $s = 0$, the norms $\|u\|_{H^{1/2}(\partial\Omega)}$ and $\|u\|_{\H^{1/2}(\Gamma)}$ can equivalently
be described by the Aronstein-Slobodeckii norms of $u$ and $E_{0,\Gamma} u$ 
(cf. \cite[(3.18)]{mclean} for the definition of the Aronstein-Slobodeckii norm).
\item 
\label{item:facts:sobolev-iii}
For $s = 0$, the spaces $H^{1/2}(\partial\Omega)$ and $\H^{1/2}(\Gamma)$ are obtained from 
interpolating between the cases $s = -1/2$ (i.e., $L^2(\partial\Omega)$ or $L^2(\Gamma)$) and 
$s = 1/2$ (i.e., $H^1(\partial\Omega)$ or $\H^1(\Gamma)$) using the $K$-method (cf., e.g., \cite[{Thm.~B.11}]{mclean} for the 
case of $H^s(\partial\Omega)$).  
\end{enumerate}  
\end{facts}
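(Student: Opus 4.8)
The plan is not to give a self-contained argument but to reduce all three assertions to standard facts about Sobolev spaces on $\R^{d-1}$, the actual work being the book-keeping needed to pass through the bi-Lipschitz charts describing $\partial\Omega$, the Lipschitz dissection, and the zero-extension operator $E_{0,\Gamma}$. First I would fix a finite atlas of bi-Lipschitz charts $\psi_j$ covering $\partial\Omega$ together with a subordinate Lipschitz partition of unity $\{\chi_j\}$; then, directly from the definition of $H^{1/2+s}(\partial\Omega)$ (and the known independence of that definition from the atlas), each of the three competing norms on $\partial\Omega$ — the Bessel-potential norm, the intrinsic norm $\norm{u}{L^2(\partial\Omega)}^2 + \norm{\nabla_\Gamma u}{L^2(\partial\Omega)}^2$, and the Aronstein-Slobodeckii norm — is equivalent, with constants depending only on the atlas, to the square sum over $j$ of the corresponding norm of $(\chi_j u)\circ\psi_j$ on $\R^{d-1}$. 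On $\R^{d-1}$ one has, by Plancherel's theorem, $\norm{v}{H^1(\R^{d-1})}^2 \simeq \norm{v}{L^2(\R^{d-1})}^2 + \norm{\nabla v}{L^2(\R^{d-1})}^2$ and $\norm{v}{H^{1/2}(\R^{d-1})}^2 \simeq \norm{v}{L^2(\R^{d-1})}^2 + \seminorm{v}{H^{1/2}(\R^{d-1})}^2$, where the last term is the double-integral (Aronstein-Slobodeckii) seminorm. Bi-Lipschitz pullback preserves $L^2$-norms, preserves $L^2$-norms of gradients (by Rademacher's theorem, the a.e.\ chain rule, and the two-sided bounds on the Jacobians of $\psi_j$ and $\psi_j^{-1}$), and preserves the double-integral seminorm up to constants (change of variables, using $|x-y| \simeq |\psi_j(x) - \psi_j(y)|$); the cross terms produced by $\nabla_\Gamma(\chi_j u) = \chi_j\nabla_\Gamma u + u\,\nabla_\Gamma\chi_j$ are harmless because $\sum_j\nabla_\Gamma\chi_j = 0$ and $|\nabla_\Gamma\chi_j|$ is bounded. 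This settles the $H^1(\partial\Omega)$-part of (i) and the $H^{1/2}(\partial\Omega)$-part of (ii).

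To transfer these to $\Gamma$ I would apply the boundary statements to $v := E_{0,\Gamma}u$. For (i), the point is that a function $v\in H^1(\partial\Omega)$ which vanishes a.e.\ on the positive-measure set $\partial\Omega\setminus\Gamma$ has $\nabla_\Gamma v = 0$ a.e.\ there — the standard fact that the weak gradient of an $H^1$-function vanishes a.e.\ on its zero set — so that $\norm{\nabla_\Gamma E_{0,\Gamma}u}{L^2(\partial\Omega)} = \norm{\nabla_\Gamma u}{L^2(\Gamma)}$ and likewise for the $L^2$-term; combined with the boundary equivalence and the definition $\norm{u}{\H^1(\Gamma)} = \norm{E_{0,\Gamma}u}{H^1(\partial\Omega)}$, this gives the $\H^1(\Gamma)$-statement. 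For (ii), by definition $\norm{u}{\H^{1/2}(\Gamma)} = \norm{E_{0,\Gamma}u}{H^{1/2}(\partial\Omega)}$, and the boundary part of (ii) rewrites the right-hand side as the Aronstein-Slobodeckii norm of $E_{0,\Gamma}u$; that the dissection is Lipschitz is exactly what makes $E_{0,\Gamma}$ behave as required, and in particular guarantees $\H^{1/2+s}(\Gamma) = L^2(\Gamma)$ for $s = -1/2$.

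The interpolation statement (iii) is where I expect the only genuine difficulty. The identity $H^{1/2}(\partial\Omega) = [L^2(\partial\Omega), H^1(\partial\Omega)]_{1/2}$ follows chart-by-chart from \cite[Thm.~B.11]{mclean}, since the Bessel-potential scale on $\R^{d-1}$ is a $K$-interpolation scale and localization preserves this. The delicate assertion is $\H^{1/2}(\Gamma) = [L^2(\Gamma), \H^1(\Gamma)]_{1/2}$: here $1/2$ is the exceptional Sobolev exponent at which $\H^{1/2}(\Gamma)$ is a proper subspace of $H^{1/2}(\Gamma)$, so the boundary identity cannot simply be transported by a retraction argument — although $E_{0,\Gamma}$ is bounded, with equivalent norms onto its range, from $L^2(\Gamma)$ into $L^2(\partial\Omega)$ and (by (i)) from $\H^1(\Gamma)$ into $H^1(\partial\Omega)$, its one-sided inverse, restriction to $\Gamma$, does not map $H^1(\partial\Omega)$ back into $\H^1(\Gamma)$. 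One inclusion is nevertheless cheap from the $K$-functional: since $E_{0,\Gamma}(\H^1(\Gamma))\subseteq H^1(\partial\Omega)$, the infimum defining $K(t,u;L^2(\Gamma),\H^1(\Gamma))$ runs over a smaller class than that defining $K(t,E_{0,\Gamma}u;L^2(\partial\Omega),H^1(\partial\Omega))$, whence the interpolation norm of $u$ dominates $\norm{E_{0,\Gamma}u}{H^{1/2}(\partial\Omega)} = \norm{u}{\H^{1/2}(\Gamma)}$. For the converse one has to produce, for every $t>0$, a near-optimal splitting of $E_{0,\Gamma}u$ in the boundary couple whose $H^1$-part already vanishes outside $\Gamma$; this is precisely the classical Lions--Magenes identification of the zero-extension space with the interpolation space, and for Lipschitz dissections I would invoke the corresponding result from \cite{mclean} rather than reprove it. Equivalently, items (i) and (ii) already furnish intrinsic norms for $L^2(\Gamma)$, $\H^1(\Gamma)$ and $\H^{1/2}(\Gamma)$, so that — after localization near $\partial\Gamma$ — only the Euclidean statement that the Lions--Magenes space of a model half-space is the half-index $K$-interpolation space between $L^2$ and the closure of $C_c^\infty$ in $H^1$ remains, which is standard.
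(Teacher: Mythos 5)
The paper explicitly declines to prove Facts~\ref{facts:sobolev} (``stated here without proof''), so there is no in-paper argument to compare against; your proposal is filling in what the authors left to the literature. With that caveat, your outline is correct and reflects the standard way these equivalences are established: localize via a finite atlas of bi-Lipschitz charts and a subordinate Lipschitz partition of unity, use the Fourier (Bessel-potential) characterization on $\R^{d-1}$, transport back by bi-Lipschitz invariance of $L^2$, $H^1$, and the double-integral seminorm, and for the $\Gamma$-versions of (i) and (ii) use the definition $\|u\|_{\H^{\ast}(\Gamma)} = \|E_{0,\Gamma}u\|_{H^{\ast}(\partial\Omega)}$ together with the fact that the weak gradient of an $H^1$-function vanishes a.e.\ on its zero set.

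Your treatment of item~(iii) is also sound and, importantly, you correctly isolate the only genuinely delicate point: $E_{0,\Gamma}$ is not a retraction from $H^1(\partial\Omega)$ onto $\H^1(\Gamma)$, so the standard retract/co-retract argument for transferring interpolation identities does not apply directly at the exceptional exponent $s=1/2$. Your one-line $K$-functional argument for the inclusion $[L^2(\Gamma),\H^1(\Gamma)]_{1/2,2}\hookrightarrow \H^{1/2}(\Gamma)$ (by noting that zero-extension maps the interpolation couple on $\Gamma$ boundedly into the one on $\partial\Omega$) is correct. For the converse, invoking the Lions--Magenes identification $\H^{1/2}=[L^2,\H^1]_{1/2,2}$ after localizing near $\partial\Gamma$ to a Lipschitz half-space model is the right move; it does deserve emphasis that this step uses that the dissection is Lipschitz, and that on a Lipschitz domain $\H^1$ coincides with $H^1_0$, which is what allows the classical half-space result to be transported by a bi-Lipschitz chart. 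In short, your proposal is a correct and appropriately detailed reconstruction of the proof; the paper simply treats these as known.
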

Negative order Sobolev spaces are defined by duality, namely, for $s \in \{-1/2,0,1/2\}$, 
\begin{align}\label{eq:negorder}
\begin{split}
  H^{-1/2}(\partial\Omega) &:= H^{1/2}(\partial\Omega)',\\
  \H^{-(1/2+s)}(\Gamma) &:= H^{1/2+s}(\Gamma)',\quad\text{and}\\
  H^{-(1/2+s)}(\Gamma) &:= \H^{1/2+s}(\Gamma)',
  \end{split}
\end{align}
where duality pairings $\langle \cdot ,\cdot\rangle$ are understood to extend the standard $L^2$-scalar product on $\partial\Omega$ or $\Gamma$.
We observe the continuous inclusions
\begin{align*}
  \H^{\pm(1/2+s)}(\Gamma) &\subseteq H^{\pm(1/2+s)}(\Gamma) 
 \quad\text{as well as}\quad 
  \H^{\pm(1/2+s)}(\partial\Omega) = H^{\pm(1/2+s)}(\partial\Omega). 
\end{align*}

We also note that for $\psi \in L^2(\Gamma)$ the zero extension $E_{0,\Gamma} \psi$ satisfies $E_{0,\Gamma} \psi \in H^{-1/2}(\partial\Omega)$ with 
\begin{equation}
\label{eq:normeqiv}
\|\psi\|_{\H^{-1/2}(\Gamma)} = \|E_{0,\Gamma} \psi\|_{H^{-1/2}(\partial\Omega)}. 
\end{equation}

We denote by $\trace(\cdot)$ the interior trace operator, i.e., $\trace u$ is the 
restriction of a function $u\in H^1(\Omega)$ to the boundary $\partial\Omega$. 
With $\displaystyle   H^1_\Delta(\Omega):=
  \set{u \in H^1(\Omega)}{ -\Delta u \in L^2(\Omega)}
$, the interior conormal derivative operator 
$\gamma_1^\interior:H^1_\Delta(\Omega)\rightarrow H^{-1/2}(\partial\Omega)$
is defined by the 
first Green's formula, viz., 
\begin{align}
  \label{eq:first Green identity}
  \dual{\gamma_1^\interior u}{v}_{\partial\Omega}
  =\dual{\nabla u}{\nabla v}_\Omega
  - \dual{-\Delta u}{v}_\Omega
  \quad\text{for all }v\in H^1(\Omega).
\end{align}

\begin{remark}
\label{rem:gamma_1}
The operator $\gamma_1^\interior(\cdot)$ generalizes the classical normal derivative 
operator: if $u \in H^1_\Delta(\Omega)$ is sufficiently smooth near a boundary
point $x_0$, then  $\gamma_1^\interior u$ can be represented near $x_0$ by a function 
given by the pointwise defined  normal derivative $\partial_{\normal} u$.  
\eremk
\end{remark}

The exterior trace $\trext$ and the exterior conormal derivative
operator $\gext$ are defined analogously to their interior counterparts. 
To that end, we fix a bounded Lipschitz domain $U \subset {\mathbb R}^d$ 
with $\overline{\Omega} \subset U$. The exterior trace operator 
$\trext:H^1(U\setminus\overline{\Omega}) \rightarrow H^{1/2}(\partial\Omega)$ 
is defined by restricting to $\partial\Omega$, and the exterior conormal derivative 
$\gext$ is characterized by 
 $ \dual{\gext u}{v}_{\partial\Omega}
  =\dual{\nabla u}{\nabla v}_{U\setminus\overline{\Omega}}
  - \dual{-\Delta u}{v}_{U\setminus\overline{\Omega}}$
  for all $v\in H^1(U\setminus\overline{\Omega})$ with $\trext v = 0$ on $\partial U$.

For a function $u$ that admits both conormal derivatives or both traces, we define
the jumps
$[\gamma_1 u] := \gext u - \gint u$ and  $[u] = \trext u - \trace u$, respectively.

\subsection{Boundary integral operators}
\label{section:bios}
We briefly introduce the pertinent boundary integral operators and refer to 
the monographs~\cite{mclean,hsiaowendland,ss} for further details and proofs. 
Green's function for the Laplace operator is given by 
\begin{align}
 G(x,y) = \begin{cases}
 -\frac{1}{|\S^1|}\,\log|x-y|,\quad&\text{for }d=2,\\
 +\frac{1}{|\S^{d-1}|}\,|x-y|^{-(d-2)},&\text{for }d\ge3,
 \end{cases}
\end{align}
where $|\S^{d-1}|$ denotes the surface measure of the Euclidean sphere in $\R^d$, e.g., $|\S^1|=2\pi$ and $|\S^2|=4\pi$.
The classical simple-layer potential $\widetilde\slp$ and the double-layer potential
$\widetilde \dlp$ are formally defined by
\begin{equation*}
(\widetilde \slp \psi)(x):= \int_{\partial\Omega} G(x,y) \psi(y)\,dy, 
\qquad 
(\widetilde \dlp v)(x):= \int_{\partial\Omega} \partial_{\normal(y)} G(x,y) v(y)\,dy, 
\qquad x \in {\mathbb R}^d\setminus \partial\Omega; 
\end{equation*}
here, $\partial_{\normal(y)}$ denotes the (outer) normal derivative with respect to the variable $y$. 
These pointwise defined operators can be extended to bounded linear
operators
\begin{align}
\label{eq:mapping-properties-potentials}
 \widetilde\slp\in L\big(H^{-1/2}(\partial\Omega);H^1(U)\big)
 \quad\text{and}\quad
 \widetilde\dlp\in L\big(H^{1/2}(\partial\Omega);H^1(U\setminus\partial\Omega)\big).
\end{align}
It is well-known that 
$\Delta\widetilde\slp\psi = 0 = \Delta\widetilde\dlp v$ in $U\setminus\partial\Omega$ 
for all 
$\psi\in H^{-1/2}(\partial\Omega)$ and $v\in
H^{1/2}(\partial\Omega)$.
The simple-layer, double-layer, adjoint double-layer, and the hypersingular integral operator are 
defined as follows: 
\begin{align}
  \slp = \trace\widetilde\slp, \quad \dlp = \frac{1}{2}+\trace\widetilde\dlp, \quad \dlp'=-\frac{1}{2}+\gint\widetilde\slp, \text{ and } \quad \hyp = -\gint\widetilde\dlp.
\end{align}
These linear operators are bounded linear operators for $s \in \{-1/2,0,1/2\}$ as follows: 

\begin{align}
 \label{def:slp}
 \slp&\in L(H^{-1/2+s}(\partial\Omega);H^{1/2+s}(\partial\Omega)),
 \\
 \label{def:dlp}
 \dlp&\in L(H^{1/2+s}(\partial\Omega);H^{1/2+s}(\partial\Omega)),
 \\
 \label{def:adlp}
 \dlp'&\in L(H^{-1/2+s}(\partial\Omega);H^{-1/2+s}(\partial\Omega)),
 \\
 \label{def:hyp}
 \hyp&\in L(H^{1/2+s}(\partial\Omega);H^{-1/2+s}(\partial\Omega)),
\end{align}

The operators $\widetilde \slp$, $\slp$, $\dlp'$ will often be applied to functions in $L^2(\Gamma)$. 
Throughout the paper, we employ the convention that for $\psi \in L^2(\Gamma)$ we implicitly extend by zero, e.g., 
\begin{equation}
\label{eq:convention}
\mbox{$\widetilde\slp \psi$ means $\widetilde \slp (E_{0,\Gamma} \psi$),} 
\quad 
\mbox{$\slp \psi$ means $\slp (E_{0,\Gamma} \psi$),} 
\quad 
\mbox{and $\dlp' \psi$ means $\dlp' (E_{0,\Gamma} \psi$).} 
\end{equation}
An analogous extension is obviously used when $\widetilde \dlp$, $\dlp$, $\hyp$ are applied to an 
$v \in \H^{1/2}(\Gamma)$. 
\begin{remark}
Ellipticity of $\slp$ and $\hyp$ is not used in our analysis of Theorem~\ref{thm:invest}
and Corollary~\ref{cor:invest}.
In particular, there is no need to scale $\Omega$ to ensure 
$\diam(\Omega)<1$ in 2D or to assume that $\Gamma$ is connected. 
\eremk
\end{remark}

\subsection{Surface simplices and admissible triangulations}\label{section:surfsimp}
Fix the reference simplex 
$T_{\rm ref} := \{ x\in\R^{d-1}, \, 0 < x_1, \dots, x_{d-1}, \sum_{j=1}^{d-1}x_j < 1\}$, which is 
the convex hull of the $d$ vertices $\{0,e_1,\ldots,e_{d-1}\}$ (``$0$-faces''). The convex hull of any $j+1$ of these vertices is called
a ``$j$-face'' of $\Tref$. We call the $(d-2)$-faces ``facets'' of $\Tref$. 

We require the concept of regular, shape-regular triangulations ${\mesh_\index}$ of $\Gamma$. 
\begin{definition}[regular and shape-regular triangulations] 
\label{def:mesh}
A set $\mesh_\index$ of subsets of $\Gamma$ is called a \emph{regular} triangulation of $\Gamma$ if 
the following is true: 
\begin{enumerate}[(i)]
\item 
\label{item:def:mesh-i}
The {\em elements} $T \in \mesh_\index$ are relatively open subsets of $\Gamma$ and each $T$ is the image of $\Tref$
under an {\em element map} $\gamma_T:\overline{\Tref} \rightarrow \overline{T}$. The element map
$\gamma_T$ is assumed to be bijective and $C^1$ on $\overline{\Tref}$. 
\item 
\label{item:def:mesh-ii}
The elements cover $\Gamma$: $\bigcup_{T \in \mesh_\index} \overline{T}  = \overline{\Gamma}$. 
\item 
\label{item:def:mesh-iii}

``no hanging nodes'': 
For each pair $(T,T^\prime) \in \mesh_\index\times \mesh_\index$, the intersection $\overline{T} \cap \overline{T^\prime}$ 
is either empty or there are two $j$-faces $f$, $f^\prime \subseteq \partial\Tref$ of $\Tref$ with 
$j \in \{0,\ldots,d-2\}$ such that 
$\overline{T} \cap \overline{T^\prime}  = \gamma_T(f) = \gamma_{T^\prime}(f^\prime)$. 
\item 
\label{item:def:mesh-iv}
Parametrizations of common boundary parts of neighboring elements are compatible: 
If $\emptyset \ne \overline{T} \cap \overline{T^\prime}  = \gamma_T(f) = \gamma_{T^\prime}(f^\prime)$, then 
$\gamma_{T}^{-1} \circ \gamma_{T^\prime}:f^\prime \rightarrow f$ is an affine isomorphism. 
\end{enumerate}
We call the images of vertices of $\Tref$ under the element maps {\em nodes} of $\mesh_\index$ 
and collect them in the set $\NN_\index$.  The 
images of the $(d-2)$-faces of $\Tref$ are called {\em facets} of $\mesh_\index$
and collected in the set $\FF_\index$.
For each $T \in \mesh_\index$, we set 
$h(T):= \operatorname*{diam}(T) := \sup_{x,y \in T} |x - y|$. 

A regular triangulation is called \emph{$\kappa$-shape regular}, if the element maps $\gamma_T$ satisfy the following:
\begin{enumerate}[(i)]
\setcounter{enumi}{4}
\item 
\label{item:def:mesh-v}
Let $G_T(x):= \gamma_T'(x)^\top\cdot\gamma_T'(x)\in \R^{(d-1)\times(d-1)}$ be the symmetric Gramian matrix of $\gamma_T$. The triangulation is $\kappa$-shape regular if for 
all $T \in \mesh_\index$ the extremal eigenvalues $\lambda_{min} (G_T(x))$ and $\lambda_{max} (G_T(x))$ 
of $G_T(x)$ satisfy 
$$
\sup_{x  \in \Tref} \left( \frac{h(T)^{2}}{\lambda_{min}(G_T(x))} + \frac{\lambda_{max}(G_T(x))}{h(T)^{2}}\right) \leq \kappa. 
$$
\item
\label{item:def:mesh-vi}
If $d = 2$, we require explicitly that the element sizes of neighboring elements are comparable: 
$$
h(T) \leq \kappa h(T^\prime) \qquad \text{for all }T, T^\prime \mbox{ with } \overline{T} \cap \overline{T^\prime} \ne \emptyset.
$$
\end{enumerate}
\end{definition}
With each triangulation $\mesh_\index$, we associate the local mesh size function $\meshsize\in L^\infty(\Gamma)$
which is defined elementwise by $\meshsize|_T := \meshsize(T)$ for all $T\in\mesh_\index$.
We note that for a $\kappa$-shape regular mesh we have 
\begin{align}\label{def:kappa:3d}
 \max_{T\in\mesh_\index}\frac{h(T)^{d-1}}{|T|} \lesssim 1, 
\end{align}
where the implied constant depends solely on $\kappa$. 

If $\Gamma$ is the union of pieces of $(d-1)$-dimensional hyperplanes and the element 
maps are {\em affine}, then the Gramians are constants and the Definition~\ref{def:mesh} generalizes
the classical concept of a shape-regular triangulation of $\Gamma$. In the non-affine case, the following
example illustrates how triangulations as stipulated in Definition~\ref{def:mesh} can be created: 
\begin{example}
Let $\Gamma \subseteq \partial\Omega$ be an open surface piece and assume 
$\Gamma = \gamma(\widehat\Gamma)$ for some reference configuration $\widehat\Gamma \subseteq \R^{d-1}$
and some sufficiently smooth map $\gamma$. 
Let $\widehat \mesh_\index = \{\widehat T_1,\ldots,\widehat T_N\}$ be a standard, regular, shape-regular 
triangulation of $\widehat\Gamma$ with affine element maps $\widehat\gamma_{\widehat T_i}$, $i=1,\ldots,N$. 
Then, the triangulation with elements $T = \gamma \circ \gamma_{\widehat T_i} (\Tref)$ and 
element maps $\gamma \circ \gamma_{\widehat T_i}$ satisfies the hypotheses of Definition~\ref{def:mesh}.
\newline 
This concept generalizes to surfaces consisting of several patches; it is worth emphasizing that in that 
case the patch parametrizations need to match at patch boundaries.
\eremk
\end{example}
\begin{remark}
In Definition~\ref{def:mesh} 
the conditions on the mesh are formulated so as to ensure that the spaces $\SS^{\q+1}(\mesh_\index)$
below have good approximation properties. The conditions (\ref{item:def:mesh-iii}) and (\ref{item:def:mesh-iv}) 
in Definition~\ref{def:mesh} could be relaxed if only good approximation properties of the spaces 
$\PP^\q(\mesh_\index)$ are required. 
\eremk
\end{remark}

For an element $T \in \TT_\index$, we define 
the element patch $\omega_\index(T)$ by 
\begin{align}\label{def:patch}
 \omega_\index(T) := \left( \bigcup\set {\overline{T'}}{T^\prime \in \TT_\index \mbox{ with } \overline{T}\cap \overline{T'}\neq\emptyset}\right)^\circ. 
\end{align}%
The assumptions on the element maps of a $\kappa$-shape regular
triangulation imply that elements of a patch are comparable in size. Furthermore, 
the fact that $\Gamma$ results from a Lipschitz dissection of $\partial\Omega$ imposes certain topological
restrictions on the patches: 
\begin{lemma}
\label{lemma:patch}
Let $\mesh_\index$ be a regular, $\kappa$-shape regular mesh. Then there is a constant $C > 0$ that depends 
solely on $\kappa$ and the Lipschitz character of $\partial\Omega$ such that the following holds:
\begin{enumerate}[(i)]
\item
\label{item:lemma:patch-i}
$h(T) \leq C h(T^\prime)$ 
for any two elements $T$, $T^\prime$ with $\overline{T} \cap \overline{T^\prime} \ne \emptyset$. 
\item 
\label{item:lemma:patch-ii}
The number of elements in an element patch is bounded by $C$. 
\item 
\label{item:lemma:patch-iii}
For any two elements $T$, $T^\prime$
in the element patch $\omega_\index( T^{\prime\prime})$ there is a sequence $T = T_0, \ldots, T_n = T^\prime$ 
of elements $T_i$, $i=0,\ldots,n$, in $\omega_\index(T^{\prime\prime})$ such that two successive elements $T_i$, $T_{i+1}$ 
share a common facet: $\overline{T_i} \cap \overline{T_{i+1}} \in \FF_\index$ for $i=0,\ldots,n-1$. 
\end{enumerate}
\end{lemma}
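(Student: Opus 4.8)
The plan is to exploit that $\Gamma$ arises from a Lipschitz dissection, so locally near every boundary point $\partial\Omega$ (and hence $\Gamma$) is the graph of a Lipschitz function over a $(d-1)$-dimensional ball, with Lipschitz constant and ball radius controlled by the Lipschitz character $L$ of $\partial\Omega$. Claim~(\ref{item:lemma:patch-i}) is the foundation and the other two follow from it combined with the no-hanging-nodes condition. For~(\ref{item:lemma:patch-i}), fix neighboring $T,T'$ with $\overline T\cap\overline{T'}\ne\emptyset$ and pick a point $x_0$ in the intersection. In $d=2$ this is exactly hypothesis~(\ref{item:def:mesh-vi}) of Definition~\ref{def:mesh}, so nothing is to prove. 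For $d=3$ (and $d\ge 3$ in general), I would work in a local bi-Lipschitz chart around $x_0$ in which $\Gamma$ becomes (a piece of) a hyperplane; under such a chart, the shape-regularity condition~(\ref{item:def:mesh-v}) — which bounds the extremal eigenvalues of the Gramians $G_T$, $G_{T'}$ from above and below by multiples of $h(T)^2$, $h(T')^2$ — transfers, up to constants depending only on $L$, to the pulled-back elements $\widehat T$, $\widehat T'$ in the hyperplane. Since $\widehat T$ is the image of the fixed reference simplex $\Tref$ under a map with Gramian eigenvalues $\simeq h(T)^2$, the inradius and diameter of $\widehat T$ are both $\simeq h(T)$, and likewise for $\widehat T'$; the same holds for $h(T),h(T')$ themselves because the chart is bi-Lipschitz. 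Now $\widehat T$ and $\widehat T'$ are two subsets of a hyperplane sharing a point, each containing a ball of radius $\simeq h(T)$ resp.\ $\simeq h(T')$ and contained in a ball of the same order; but by~(\ref{item:def:mesh-iii}) their closures meet in a common (lower-dimensional) face of both, so in particular $\diam(\overline T\cap\overline{T'})\le\min\{h(T),h(T')\}$ while the geometry of two simplices meeting along a shared subsimplex, each shape-regular, forces $h(T)\simeq h(T')$. (Concretely: the shared face has a vertex, around which both simplices subtend a cone of aperture bounded below by shape regularity, and one simplex cannot be arbitrarily larger than the other without violating that the intersection is a single common face rather than a partial overlap.) The constant depends only on $\kappa$ and $L$.

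Granting~(\ref{item:lemma:patch-i}), claim~(\ref{item:lemma:patch-ii}) is a packing argument: all elements $T'\in\mesh_\index$ with $\overline{T'}\cap\overline T\ne\emptyset$ satisfy $h(T')\simeq h(T)$ and $|T'|\gtrsim h(T')^{d-1}\simeq h(T)^{d-1}$ by~\eqref{def:kappa:3d}, and they are pairwise essentially disjoint subsets of $\Gamma$ contained in a surface ball of radius $\lesssim h(T)$ centered at any node of $T$; since $\Gamma$ is Lipschitz, the surface measure of that ball is $\lesssim h(T)^{d-1}$, so the number of such $T'$ is $\lesssim 1$, with the bound depending only on $\kappa$ and $L$.

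For~(\ref{item:lemma:patch-iii}), the statement is a local connectivity-through-facets property of the patch $\omega_\index(T'')$. I would argue in the local Lipschitz chart at a node $z$ of $T''$: all elements of $\omega_\index(T'')$ touch $T''$, hence lie within a small surface ball around $z$, which the chart flattens to a piece of hyperplane; because $\partial\Omega$ is (locally) connected and the dissection is Lipschitz, this flattened neighborhood of $z$ is connected, and the elements of $\omega_\index(T'')$ form (by the no-hanging-nodes condition) a conforming simplicial complex covering it. In a conforming simplicial complex covering a connected piece of a manifold of dimension $d-1$, any two $(d-1)$-simplices can be joined by a chain of $(d-1)$-simplices in which consecutive ones share a $(d-2)$-face (a facet) — this is the standard fact that such a complex is ``facet-connected''; one proves it by showing the facet-adjacency relation has a single equivalence class, since a nontrivial partition would disconnect the underlying carrier across a set of codimension $\ge 2$, impossible for a connected $(d-1)$-manifold. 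Translating the chain back through the chart gives the desired sequence $T=T_0,\dots,T_n=T'$ inside $\omega_\index(T'')$.

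The main obstacle is~(\ref{item:lemma:patch-i}): making rigorous, in the curved (non-affine) setting, the passage from the eigenvalue bounds on the Gramians to genuine metric size comparison of neighboring elements, and then extracting $h(T)\simeq h(T')$ purely from the fact that $\overline T\cap\overline{T'}$ is a shared face of two individually shape-regular curved simplices. Once neighboring elements are known to be size-comparable, (\ref{item:lemma:patch-ii}) and~(\ref{item:lemma:patch-iii}) are routine volume-packing and simplicial-topology arguments, respectively.
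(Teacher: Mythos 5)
Your overall strategy inverts the logical order used in the paper, and this inversion introduces a genuine gap in your proof of~(\ref{item:lemma:patch-i}). You try to deduce size comparability of neighboring elements directly from shape regularity and the no-hanging-nodes condition by arguing that ``two simplices meeting along a shared subsimplex, each shape-regular, forces $h(T)\simeq h(T')$.'' That is true when the shared face is a \emph{facet} (a $(d-2)$-face), because the facet's diameter is then simultaneously $\simeq h(T)$, $\simeq h(T')$, and $\le\min\{h(T),h(T')\}$. But the no-hanging-nodes condition (\ref{item:def:mesh-iii}) also permits the shared face to be a lower-dimensional $j$-face with $j<d-2$, for instance a single vertex when $d=3$. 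Two shape-regular triangles sharing only a vertex are not, in isolation, constrained to have comparable diameters; the ``cone of aperture bounded below'' argument you sketch does not produce a contradiction when the intersection is literally a point. What actually forces size comparability in that case is that the elements are part of a conforming triangulation: the two triangles are linked around the shared vertex by a chain of elements in which consecutive ones share a facet, each such pair has comparable size, and the chain has bounded length. That is exactly statements~(\ref{item:lemma:patch-iii}) and~(\ref{item:lemma:patch-ii}), which the paper therefore proves \emph{first}, and only then deduces~(\ref{item:lemma:patch-i}) for $d\ge3$ by combining the facet-sharing case with the bounded chain. Your proposed order (\ref{item:lemma:patch-i})~$\Rightarrow$~(\ref{item:lemma:patch-ii})~$\Rightarrow$~(\ref{item:lemma:patch-iii}) is therefore not viable as written: (\ref{item:lemma:patch-i}) is not provable without (\ref{item:lemma:patch-ii}) and (\ref{item:lemma:patch-iii}), and your packing argument for (\ref{item:lemma:patch-ii}) presupposes (\ref{item:lemma:patch-i}) to guarantee the patch fits in a surface ball of radius $\lesssim h(T)$.

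The pieces that do not depend on this ordering are fine. Your chart-based argument for (\ref{item:lemma:patch-iii}) — pull back the node patch to the hyperplane, observe that a conforming simplicial complex covering a connected $(d-1)$-manifold piece is facet-connected — is essentially the paper's argument (the paper additionally distinguishes $z\in\Gamma$ from $z\in\partial\Gamma$ so that the pull-backs either cover a full neighborhood of $0$ or lie on one side of a Lipschitz graph, but the conclusion is the same). Your volume-packing bound for (\ref{item:lemma:patch-ii}) is a valid alternative to the paper's solid-angle argument \emph{once} size comparability within the node patch is available, but the paper's solid-angle argument has the advantage of not needing (\ref{item:lemma:patch-i}) at all: the lower bound on the solid angle subtended at the shared vertex follows from the Gramian eigenvalue condition (\ref{item:def:mesh-v}) and the bilipschitz chart alone. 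To repair your proof, adopt the paper's order: prove (\ref{item:lemma:patch-iii}) by chart plus conformity, prove (\ref{item:lemma:patch-ii}) by solid angles, and then obtain (\ref{item:lemma:patch-i}) for $d\ge3$ by chaining facet-sharing pairs of bounded length.
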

\begin{proof}[Sketch of Proof]
{\em Statement~(\ref{item:lemma:patch-iii}):}
We show~\eqref{item:lemma:patch-iii} first for the node patch 
\begin{align*}
\omega_\index(z):=  \left( \bigcup\set {\overline{T}}{T \in \TT_\index \mbox{ with } z\in\overline{T}}\right)^\circ
\end{align*}
and some node $z$ of $T^{\prime\prime}$.
This follows from the fact that $\Gamma$ results from a Lipschitz dissection
and considerations in $\R^{d-1}$ using local charts. After a Euclidean change of coordinates, we may 
assume that $\partial\Omega$ is (locally) a hypograph, i.e., there is a Lipschitz continuous function 
$\Lambda: B_r(0) \rightarrow \R$ such that the set $\{(x,\Lambda(x))\colon x \in B_r(0)\} \subset \partial\Omega$. 
Without loss of generality, we assume the Euclidean coordinate change is such that $z = (0,\Lambda(0))$. 
One may also assume that $\Lambda$ is defined on $\R^{d-1}$ (and Lipschitz continuous) so that the 
map $\widetilde\Lambda:\R^d \rightarrow \R^d$ given by $(x,t) \mapsto (x,\Lambda(x)+t)$ is bilipschitz. 
\newline 
We distinguish the cases 
$z \in \Gamma$ and $z \in \partial\Gamma$. Let $z$ be an interior point of $\Gamma$. Then, 
the pull-backs $\widehat T:= \widetilde \Lambda^{-1}(T)$, $T \subseteq \omega_\index(z)$, 
are contained in the hyperplane $\R^{d-1} \times \{0\}$ and (identifying this hyperplane with $\R^{d-1}$) 
completely cover a neighborhood of 
$0 \in \R^{d-1}$. This together with (\ref{item:def:mesh-iii}) of Definition~\ref{def:mesh} shows the claim. 
If $z \in \partial\Gamma$, then the fact 
that the elements are contained in $\Gamma$
and that $\Gamma$ results from a Lipschitz dissection implies that near $0\in\R^{d-1}$, 
the pull-backs $\widehat T$ are all on one side of a Lipschitz graph in $\R^{d-1}$. This together 
with (\ref{item:def:mesh-iii}) of Definition~\ref{def:mesh} again implies the claim. Since $\omega_\index(T^{\prime\prime})$ is the union of the $d$ node patches $\omega_\index(z)$ associated with the $d$ nodes of $T^{\prime\prime}$, this concludes the proof of~\eqref{item:lemma:patch-iii}.

{\em Statement~(\ref{item:lemma:patch-ii}):} 
Consider the case of an interior point $z \in \Gamma$. The assumption 
(\ref{item:def:mesh-iii}) of Definition~\ref{def:mesh} and the fact that the map $\widetilde \Lambda$
is bilipschitz implies that the solid angles of the elements $\widehat T$ at $0$ are bounded away from 
zero by a constant that depends solely on $\kappa$ and $\widetilde \Lambda$. This implies the claim for 
a node patch $\omega_\index(z)$ and thus for $\omega_\index(T)$ with $T\in\mesh_\index$. 
\newline 
{\em Statement~(\ref{item:lemma:patch-i}):} 
For $d = 2$, this follows by definition. For $d \ge 3$ we first note that two 
elements sharing a facet $f \in \FF_\index$ have comparable size by 
(\ref{item:def:mesh-iii})---(\ref{item:def:mesh-v}) of Definition~\ref{def:mesh}. 
We conclude the proof with the aid of statements (\ref{item:lemma:patch-iii}) and (\ref{item:lemma:patch-ii}). 
\end{proof}
\subsection{Admissible weight functions and discrete spaces}
\label{section:bem}
\begin{definition}[$\sigma$-admissible weight functions and polynomial degree distributions]
A function $w_\index\in L^\infty(\Gamma)$ 
is $\sigma$-{\em admissible} with respect to $\TT_\index$ if 
\begin{align*}
 \norm{w_\index}{L^\infty(T)} \leq \sigma \, w_\index(x) 
 \quad\text{almost everywhere on } \omega_\index(T).
\end{align*} 
A $\sigma$-admissible function $q_\index\in L^\infty(\Gamma)$ is called a 
\emph{ $\sigma$-admissible polynomial degree distribution} 
with respect to $\TT_\index$, if $q_\index(T):=q_\index|_T\in\N_0$ for all 
$T\in\TT_\index$.
\end{definition}
We write 
\begin{align}
\PP^{\q}(\mesh_\index)
 := \set{\Psi_\index\in L^2(\Gamma)}{\forall T\in\mesh_\index\quad
 \Psi_\index\circ\gamma_T\text{ is a polynomial of degree }\le q_\index(T)},
\end{align}
for the space of (discontinuous) piecewise polynomials of local degree
$q_\index(T)$. Moreover, we introduce spaces of continuous piecewise polynomials of local degree $q_\index(T)+1$ by 
\begin{align}
\SS^{\q+1}(\mesh_\index) &:= \PP^{\q+1}(\mesh_\index)\cap H^1(\Gamma), \\
 \widetilde\SS^{\q+1}(\mesh_\index) &:= \SS^{\q+1}(\mesh_\index) \cap \H^1(\Gamma). 
\end{align}
We note the inclusions
$\PP^{\q}(\mesh_\index)\subset L^2(\Gamma) \subset \H^{-1/2}(\Gamma)$, 
$\widetilde\SS^{\q+1}(\mesh_\index)\subset \H^1(\Gamma) \subset \H^{1/2}(\Gamma)$, 
and $\SS^{\q+1}(\mesh_\index)\subset H^1(\Gamma)$, as well as 
$\widetilde\SS^{\q+1}(\mesh_\index) = \SS^{\q+1}(\mesh_\index)$ in case of 
$\Gamma=\partial\Omega$. 

For $q \in \N_0$, the use of non-boldface superscripts in $\PP^q(\mesh_\index)$, $\SS^{q+1}(\mesh_\index)$, and 
$\widetilde \SS^{q+1}(\mesh_\index)$ indicates that a constant polynomial degree is employed. 
%

\section{Main result and applications}
\label{section:statement}
\subsection{Inverse estimates}
\noindent
The following Theorem~\ref{thm:invest} is the main result of this work. 

\begin{theorem}
\label{thm:invest}
Let $\mesh_\index$ be a regular, $\kappa$-shape regular triangulation of $\Gamma$ and let $\wght\in L^\infty(\Gamma)$ be a $\sigma$-admissible weight function with respect to $\TT_\index$.
Then, it holds
\begin{align}
\label{eq:invest:V}
\norm{\wght\nabla_\Gamma\slp\psi}{L^2(\Gamma)}
+ \norm{\wght\dlp'\psi}{L^2(\Gamma)}
& \le\c{inv}\big(\norm{\wght/\meshsize^{1/2}}{L^\infty(\Gamma)}\norm{\psi}{\H^{-1/2}(\Gamma)}
+ \norm{\wght\psi}{L^2(\Gamma)}\big),
\\
\label{eq:invest:K}
\norm{\wght\nabla_\Gamma\dlp v}{L^2(\Gamma)}
+\norm{\wght\hyp v}{L^2(\Gamma)}
&
\le\c{inv}\big(\norm{\wght/\meshsize^{1/2}}{L^\infty(\Gamma)}\norm{v}{\H^{1/2}(\Gamma)}
+ \norm{\wght\nabla_\Gamma v}{L^2(\Gamma)}\big),
\end{align}
for all functions $\psi \in L^2(\Gamma)$
and all $v \in \widetilde H^1(\Gamma)$.
The constant $\c{inv}>0$ depends only on $\partial\Omega$, $\Gamma$, the
$\kappa$-shape regularity of $\mesh_\index$, and $\sigma$.
\end{theorem}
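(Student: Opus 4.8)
The plan is to split each boundary integral operator into a near-field and a far-field part relative to the mesh, and estimate the two contributions by fundamentally different arguments. Fix an element $T\in\mesh_\index$ and, using $\sigma$-admissibility, replace $\wght$ on $T$ by its (controlled) $L^\infty$-value; then it suffices to bound $\norm{\nabla_\Gamma\slp\psi}{L^2(T)}$ etc.\ by $\bigl(h(T)^{-1/2}\norm{\psi}{\H^{-1/2}(\omega_\index(T))} + \norm{\psi}{L^2(\omega_\index(T))}\bigr)$, up to passing to a weighted version, and finally sum over $T$. Write $\psi = \psi_{\near}+\psi_{\far}$ where $\psi_{\near}:=\psi|_{\omega_\index(T)}$ (extended by zero) and $\psi_{\far}:=\psi-\psi_{\near}$ is supported away from $T$ at distance $\gtrsim h(T)$ by shape regularity (Lemma~\ref{lemma:patch}).

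\textbf{Near-field.} For $\psi_{\near}$ one uses the mapping properties \eqref{def:slp}--\eqref{def:hyp}: $\slp\in L(L^2;H^1)$ and $\dlp'\in L(L^2;L^2)$ on $\partial\Omega$, hence $\norm{\nabla_\Gamma\slp\psi_{\near}}{L^2(T)}+\norm{\dlp'\psi_{\near}}{L^2(T)}\lesssim \norm{\psi_{\near}}{L^2(\partial\Omega)}=\norm{\psi}{L^2(\omega_\index(T))}\lesssim \norm{\wght\psi}{L^2(\omega_\index(T))}/\norm{\wght}{L^\infty(T)}$ after reinserting the weight; this produces the $\norm{\wght\psi}{L^2}$ term. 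For the hypersingular/double-layer pair one argues analogously with $v$, using $\dlp\in L(H^1(\partial\Omega);H^1(\partial\Omega))$, $\hyp\in L(H^1(\partial\Omega);L^2(\partial\Omega))$ and a suitable localized cut-off of $v$; here the $\H^1$-seminorm term on the right appears. This step is essentially a consequence of continuity of the BIOs and requires only local bookkeeping.

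\textbf{Far-field.} For $\psi_{\far}$, the key is that $\widetilde\slp\psi_{\far}$ is harmonic in a neighborhood $\omega$ of $T$ of size $\simeq h(T)$ (since $\operatorname{dist}(T,\supp\psi_{\far})\gtrsim h(T)$), so interior regularity for harmonic functions gives $\norm{\nabla^2(\widetilde\slp\psi_{\far})}{L^2(\omega')}\lesssim h(T)^{-1}\norm{\nabla(\widetilde\slp\psi_{\far})}{L^2(\omega)}$ on a slightly smaller $\omega'\supset T$, and then a scaled trace inequality bounds $\norm{\nabla_\Gamma\slp\psi_{\far}}{L^2(T)}$ and $\norm{\dlp'\psi_{\far}}{L^2(T)} = \norm{\gint\widetilde\slp\psi_{\far}}{L^2(T)}$ (away from the $1/2$-jump, which is absent since $\psi_{\far}$ vanishes on $T$) in terms of $h(T)^{-1/2}\norm{\widetilde\slp\psi_{\far}}{H^1(\omega)}$. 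Finally $\norm{\widetilde\slp\psi_{\far}}{H^1(U)}\lesssim \norm{\psi_{\far}}{H^{-1/2}(\partial\Omega)}\lesssim \norm{\psi}{\H^{-1/2}(\Gamma)}$ by \eqref{eq:mapping-properties-potentials} and \eqref{eq:normeqiv}. Summation over $T$ — using $\kappa$-shape regularity and finite overlap of the patches $\omega_\index(T)$ and of the enlarged neighborhoods $\omega$ — yields \eqref{eq:invest:V}; the estimate \eqref{eq:invest:K} follows in the same way with $\widetilde\dlp v$ in place of $\widetilde\slp\psi$, using $\widetilde\dlp\in L(H^{1/2}(\partial\Omega);H^1(U\setminus\partial\Omega))$ and $\hyp=-\gint\widetilde\dlp$, $\dlp=\tfrac12+\trace\widetilde\dlp$.

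\textbf{Main obstacle.} The delicate point is the far-field analysis on a piecewise-smooth, merely Lipschitz surface: one must set up neighborhoods $\omega\subset U$ of each $T$ that are uniformly (in $h$) nice enough to run scaled interior elliptic regularity and scaled trace estimates with constants depending only on $\kappa$ and the Lipschitz/$C^1$-character of $\partial\Omega$ — in particular near corners and edges of $\partial\Omega$ and near $\partial\Gamma$, where the element maps $\gamma_T$ are curved and the geometry is not self-similar under scaling. Controlling the scaling of the trace inequality and of the Caccioppoli/interior-regularity estimate uniformly over all elements, and verifying that the relevant distances $\operatorname{dist}(T,\supp\psi_{\far})$ are indeed $\gtrsim h(T)$ via Lemma~\ref{lemma:patch}, is where the real work lies; the dedicated auxiliary sections on $\widetilde\slp$ and $\widetilde\dlp$ are precisely there to carry out this uniform scaling analysis, after which the present theorem is assembled by the splitting and summation just described.
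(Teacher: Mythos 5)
Your overall strategy matches the paper's: decompose each potential into near-field and far-field parts, bound the near-field by $L^2$-stability of the boundary integral operators, and bound the far-field by a Caccioppoli inequality for the volume potential plus a scaled trace inequality. However, the far-field part of your argument, as written, does not go through, and you are also missing an essential near-field \emph{volume} estimate.

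\textbf{The far-field summation is flawed.} You bound $\norm{\nabla_\Gamma \slp\psi_{\far}^T}{L^2(T)}\lesssim h(T)^{-1/2}\norm{\widetilde\slp\psi_{\far}^T}{H^1(\omega_T)}$ and then invoke the global mapping property to get $\norm{\widetilde\slp\psi_{\far}^T}{H^1(U)}\lesssim\norm{\psi_{\far}^T}{H^{-1/2}(\partial\Omega)}\lesssim\norm{\psi}{\H^{-1/2}(\Gamma)}$, followed by ``summation over $T$ via finite overlap.'' This cannot work: the function $\widetilde\slp\psi_{\far}^T$ depends on $T$, so finite overlap of the sets $\omega_T$ does not allow you to resum $\sum_T w(T)^2 h(T)^{-1}\norm{\widetilde\slp\psi_{\far}^T}{H^1(\omega_T)}^2$ into a single $H^1(U)$-norm of one fixed function. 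If you instead plug in the global bound, you get $\sum_T w(T)^2 h(T)^{-1}\,\norm{\psi}{\H^{-1/2}}^2$, which diverges as the number of elements grows. Moreover, the intermediate claim $\norm{\psi_{\far}^T}{H^{-1/2}(\partial\Omega)}\lesssim\norm{\psi}{\H^{-1/2}(\Gamma)}$ is false in general: multiplication by a characteristic function is not a bounded operation on $H^{-1/2}$ (take $\psi=\chi_A-\chi_B$ with $A,B$ small adjacent sets, so $\psi$ has strong cancellation but its truncation to $A$ does not). The paper's Proposition~\ref{prop:farfield:V} avoids both problems by writing $u_{\slp,T}^\far=\widetilde\slp\psi-u_{\slp,T}^\near$ and using the triangle inequality: the term $\sum_T \norm{\wght/\meshsize^{1/2}}{L^\infty(T)}^2\norm{\nabla\widetilde\slp\psi}{L^2(U_T)}^2$ involves the \emph{fixed} function $\widetilde\slp\psi$ and is resummed by finite overlap against $\norm{\wght/\meshsize^{1/2}}{L^\infty(\Gamma)}^2\norm{\psi}{\H^{-1/2}}^2$, while the other term $\sum_T\norm{\wght/\meshsize^{1/2}}{L^\infty(T)}^2\norm{\nabla u_{\slp,T}^\near}{L^2(U_T)}^2$ is a near-field \emph{volume} quantity.

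\textbf{Missing near-field volume estimate.} Your near-field step only controls the trace term $\norm{\nabla_\Gamma\slp\psi_{\near}}{L^2(T)}$ via $\slp\in L(L^2;H^1)$. But, as just explained, the correct far-field argument needs the volume bound $\sum_T \norm{\wght/\meshsize^{1/2}}{L^\infty(T)}^2\norm{\nabla u_{\slp,T}^\near}{L^2(U_T)}^2\lesssim\norm{\wght\psi}{L^2(\Gamma)}^2$ (the second half of Proposition~\ref{prop:nearfield:V}). This is not an immediate consequence of mapping properties: the paper first projects $\psi\chi_{\Gamma\cap U_T}$ onto piecewise constants, applies a kernel-scaling argument (Lemma~\ref{lemma:nearfield:V:1}) to the projected part, and handles the remainder $(1-\Pi_\index)(\psi\chi_{\Gamma\cap U_T})$ via stability of $\widetilde\slp$ in $\H^{-1/2}$ together with the approximation estimate $\norm{(1-\Pi_\index)\phi}{\H^{-1/2}}\lesssim\norm{h^{1/2}\phi}{L^2}$. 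Notice that this last step is precisely how the paper legitimately converts a truncated $L^2$-function into something controlled in $\H^{-1/2}$ --- by comparing to its local average --- whereas a bare truncation estimate in $H^{-1/2}$ is unavailable. Finally, for \eqref{eq:invest:K} ``a suitable localized cut-off'' understates the difficulty: because $H^{1/2}$ functions cannot jump, you must use smooth cut-offs $\eta_T$ \emph{and} subtract elementwise constants $v_T$ (Lemma~\ref{lemma:poincare}); the constant subtraction is also what makes the identity \eqref{eq:addition of near and far-field contributions of Ktilde} correct (since $\widetilde\dlp 1\ne 0$ in $\Omega$) and is what produces the $\norm{\wght\nabla_\Gamma v}{L^2}$ right-hand side via a patchwise Poincar\'e inequality.
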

In the following Corollary~\ref{cor:invest}, we apply the 
estimates~\eqref{eq:invest:V}--\eqref{eq:invest:K} of Theorem~\ref{thm:invest} 
to discrete functions $\Psi_\index\in\PP^\q(\mesh_\index)$ and $V_\index\in\widetilde\SS^{\q+1}(\mesh_\index)$.
We mention that the restriction
to $d \in \{2,3\}$ in Corollary~\ref{cor:invest} is due to the fact that the underlying reference
\cite{kmr14} restricts to this setting. 
\begin{corollary}
\label{cor:invest}
Let $\mesh_\index$ be a regular, $\kappa$-shape regular triangulation of $\Gamma$.
Suppose that $d\in\left\{ 2,3 \right\}$ and
that $q_\index$ is a $\sigma$-admissible polynomial degree 
distribution with respect to $\TT_\index$. 
Then, there exists a constant $\setc{invtilde}>0$ such that the following
estimates hold:
  \begin{align}
   \label{eq:cor:invest:V}
   \norm{\meshsize^{1/2}(q_\index+1)^{-1}
   \,\nabla_\Gamma\slp\Psi_\index}{L^2(\Gamma)}
+    \norm{\meshsize^{1/2}(q_\index+1)^{-1}
\,\dlp'\Psi_\index}{L^2(\Gamma)}
   &\le\c{invtilde}\norm{\Psi_\index}{\H^{-1/2}(\Gamma)},\\
   \label{eq:cor:invest:K}
   \norm{\meshsize^{1/2}(q_\index+1)^{-1}\,\nabla_\Gamma\dlp V_\index}{L^2(\Gamma)}
+   \norm{\meshsize^{1/2}(q_\index+1)^{-1}\,\hyp V_\index}{L^2(\Gamma)}
   &\le\c{invtilde}\norm{V_\index}{\H^{1/2}(\Gamma)},
  \end{align}
  for all discrete functions $\Psi_\index\in\PP^{\q}(\mesh_\index)$ and
  $V_\index\in\widetilde\SS^{\q+1}(\mesh_\index)$.
  The constant $\c{invtilde}>0$ depends only on $\partial\Omega$, $\Gamma$, the
  $\kappa$-shape regularity of $\mesh_\index$, and the $\sigma$-admissibility of $q_\index$, but is otherwise independent of the polynomial 
  degrees  and the mesh $\mesh_\index$.
\end{corollary}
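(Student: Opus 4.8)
The plan is to deduce the corollary from Theorem~\ref{thm:invest} by choosing the weight function $w_h$ appropriately and invoking the known inverse estimate for the weighted $L^2$-norm from~\cite{kmr14} (and~\cite{ghs} for $d=2$). For the simple-layer estimate~\eqref{eq:cor:invest:V}, set $\wght := \meshsize^{1/2}(q_\index+1)^{-1}$. Because $q_\index$ is a $\sigma$-admissible polynomial degree distribution and $\meshsize$ is locally comparable on patches by Lemma~\ref{lemma:patch}(\ref{item:lemma:patch-i}), this $\wght$ is $\sigma'$-admissible with $\sigma'$ depending only on $\sigma$ and $\kappa$; moreover $\norm{\wght/\meshsize^{1/2}}{L^\infty(\Gamma)} = \norm{(q_\index+1)^{-1}}{L^\infty(\Gamma)} \le 1$. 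Theorem~\ref{thm:invest}, estimate~\eqref{eq:invest:V}, then gives
\begin{align*}
\norm{\meshsize^{1/2}(q_\index+1)^{-1}\nabla_\Gamma\slp\Psi_\index}{L^2(\Gamma)}
+ \norm{\meshsize^{1/2}(q_\index+1)^{-1}\dlp'\Psi_\index}{L^2(\Gamma)}
\le \c{inv}\big(\norm{\Psi_\index}{\H^{-1/2}(\Gamma)} + \norm{\meshsize^{1/2}(q_\index+1)^{-1}\Psi_\index}{L^2(\Gamma)}\big).
\end{align*}
It therefore remains to control the second term on the right by $\norm{\Psi_\index}{\H^{-1/2}(\Gamma)}$, uniformly in $\meshsize$ and in $q_\index$.

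That control is exactly the content of the $p$-explicit weighted inverse estimate of~\cite{ghs,kmr14}, which for $d\in\{2,3\}$ states that for every $\TT_\index$-piecewise polynomial $\Psi_\index$ of degree distribution $q_\index$,
\begin{align*}
\norm{\meshsize^{1/2}(q_\index+1)^{-1}\Psi_\index}{L^2(\Gamma)} \le C\,\norm{\Psi_\index}{\H^{-1/2}(\Gamma)},
\end{align*}
with $C$ depending only on $\kappa$ and the $\sigma$-admissibility of $q_\index$. (This is the point at which the restriction $d\in\{2,3\}$ enters, since~\cite{kmr14} is stated in that setting.) Inserting this bound into the previous display, with $\c{invtilde} := \c{inv}(1+C)$, yields~\eqref{eq:cor:invest:V}. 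Because $\PP^\q(\mesh_\index)\subset L^2(\Gamma)$, the hypothesis $\psi\in L^2(\Gamma)$ of Theorem~\ref{thm:invest} is met.

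For the hypersingular estimate~\eqref{eq:cor:invest:K}, the argument is analogous but applied to~\eqref{eq:invest:K} with $v=V_\index\in\widetilde\SS^{\q+1}(\mesh_\index)\subset\widetilde H^1(\Gamma)$ and the same weight $\wght=\meshsize^{1/2}(q_\index+1)^{-1}$. Theorem~\ref{thm:invest} then leaves the residual term $\norm{\meshsize^{1/2}(q_\index+1)^{-1}\nabla_\Gamma V_\index}{L^2(\Gamma)}$, which must be bounded by $\norm{V_\index}{\H^{1/2}(\Gamma)}$. Since $\nabla_\Gamma V_\index$ is (componentwise) a $\TT_\index$-piecewise polynomial of degree distribution $q_\index$ — here the choice of degree $q_\index+1$ for the continuous space is essential, so that its surface gradient lands in $\PP^\q(\mesh_\index)$ up to the geometry factors from the element maps, which are controlled by $\kappa$-shape regularity — one again applies the $p$-explicit weighted inverse estimate of~\cite{kmr14}, now in the form $\norm{\meshsize^{1/2}(q_\index+1)^{-1}\nabla_\Gamma V_\index}{L^2(\Gamma)} \le C\,\norm{V_\index}{\H^{1/2}(\Gamma)}$ valid on the discrete space $\widetilde\SS^{\q+1}(\mesh_\index)$. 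Combining gives~\eqref{eq:cor:invest:K} with a constant of the stated dependence.

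The main obstacle is not in Theorem~\ref{thm:invest} itself but in verifying that the weighted $L^2$ (respectively weighted $H^1$-seminorm) residual terms are absorbed with a $p$-\emph{explicit} constant: this requires the sharp inverse estimate $\norm{\meshsize^{1/2}(q+1)^{-1}\Psi_\index}{L^2(\Gamma)}\lesssim\norm{\Psi_\index}{\H^{-1/2}(\Gamma)}$ from~\cite{ghs,kmr14}, whose proof is delicate and restricted to $d\in\{2,3\}$. One must also check that the geometric distortion introduced by the non-affine element maps $\gamma_T$ in the definition of $\nabla_\Gamma V_\index\circ\gamma_T$ only contributes $\kappa$-dependent constants, which follows from the bounds on the Gramians $G_T$ in Definition~\ref{def:mesh}(\ref{item:def:mesh-v}); the remaining bookkeeping (admissibility of the product weight, comparability of $\meshsize$ on patches) is routine given Lemma~\ref{lemma:patch}.
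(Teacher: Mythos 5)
Your proof matches the paper's own approach: set $w_\index = \meshsize^{1/2}(q_\index+1)^{-1}$, note $\norm{w_\index/\meshsize^{1/2}}{L^\infty(\Gamma)}\le 1$ and that $w_\index$ is admissible, apply Theorem~\ref{thm:invest}, and absorb the residual weighted terms via the $p$-explicit inverse estimates from \cite{ghs,kmr14}. One small remark: the estimate $\norm{\meshsize^{1/2}(q_\index+1)^{-1}\nabla_\Gamma V_\index}{L^2(\Gamma)}\lesssim\norm{V_\index}{\H^{1/2}(\Gamma)}$ for $V_\index\in\widetilde\SS^{\q+1}(\mesh_\index)$ is taken directly from \cite{kmr14} (cf.\ also \cite[Prop.~5]{hypsing}) and does \emph{not} follow by applying the $\PP^{\q}$-inverse estimate componentwise to $\nabla_\Gamma V_\index$ --- that would yield a bound in $\norm{\nabla_\Gamma V_\index}{\H^{-1/2}(\Gamma)}$ rather than in $\norm{V_\index}{\H^{1/2}(\Gamma)}$ --- so your digression about $\nabla_\Gamma V_\index$ landing in $\PP^{\q}(\mesh_\index)$ and the attendant element-map bookkeeping are unnecessary.
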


\begin{proof}
  The starting point are the following two inverse estimates 
  \begin{align}
    \label{eq:foo-1}
    \norm{\meshsize^{1/2}(q_\index+1)^{-1}
    \,\Psi_\index}{L^2(\Gamma)}
    &\lesssim \norm{\Psi_\index}{H^{-1/2}(\Gamma)}
    \quad\text{for all }\Psi_\index\in\PP^\q(\mesh_\index),\\
    \label{eq:foo-2}
    \norm{\meshsize^{1/2}(q_\index+1)^{-1}\nabla_\Gamma V_\index}{L^2(\Gamma)}
    &\lesssim \norm{V_\index}{\H^{1/2}(\Gamma)}
    \quad\text{for all }V_\index\in\widetilde\SS^{\q+1}(\mesh_\index),
  \end{align}
  where the hidden constants depend solely on $\partial\Omega$, $\Gamma$, the $\kappa$-shape regularity
  of $\mesh_\index$, and the $\sigma$-admissibility of $q_\index$. 
  The bound (\ref{eq:foo-1}) is essentially taken from \cite[Thm.~{3.9}]{georgoulis}. 
  However, since the non-trivial 
  interpolation argument is not worked out in \cite[Thm.~{3.9}]{georgoulis} and since \cite[Thm.~{3.9}]{georgoulis} 
  is not concerned with open surfaces $\Gamma$, we present the details in 
  Lemma~\ref{lemma:hpinvest-beweis}. We remark that its proof employs the characterization of fractional
  Sobolev norms in terms of the Aronstein-Slobodeckii norm.
  The bound 
  (\ref{eq:foo-2}) follows also from polynomial inverse estimates and an interpolation argument for 
  spaces of piecewise polynomials, which is non-trivial---see \cite{kmr14} for  details. 
  We also refer to \cite[Proposition~5]{hypsing} for the $h$-version of \eqref{eq:foo-2}, in which the 
  dependence on the polynomial degree $q_h$ is left unspecified. 

  We define a weight function by 
  $\wght := \meshsize^{1/2}(q_\index+1)^{-1}$. Note that $\norm{\wght/\meshsize^{1/2}}{L^\infty(\Gamma)}\leq 1$
  and that $\wght$ is $\tau$-admissible, where $\tau$ depends only on $\kappa$ and $\sigma$.
  The combination of \eqref{eq:foo-1} with~\eqref{eq:invest:V}
  leads to \eqref{eq:cor:invest:V}.
  The bound \eqref{eq:foo-2} in conjunction 
  with~\eqref{eq:invest:K} yields 
  \eqref{eq:cor:invest:K}.
\end{proof}

\subsection{Application to efficiency of residual error estimation}\label{section:eff}
\subsubsection{Weakly singular integral equations}
\label{sec:efficiency-weakly-singular}
The next corollary proves that the estimate~\eqref{eq:invest:V} provides 
stability of $\slp$ and $\dlp'$ in weighted norms 
for subspaces $(1-P_\index)L^2(\Gamma)\subseteq L^2(\Gamma)$, where $P_\index$ is 
some projection operator.  Note that the following corollary is in particular applicable to the 
Galerkin projection onto $\PP^\q(\mesh_\index)$.

\begin{corollary}\label{cor:stabV}
Let $\mesh_\index$ be a regular, $\kappa$-shape regular triangulation of $\Gamma$.
Let $X_\index$ be a closed subspace of $\H^{-1/2}(\Gamma)$ with 
$\PP^0(\mesh_\index)\subseteq X_\index \subset L^2(\Gamma)$. Let $\Pi_\index:L^2(\Gamma)\to X_\index$
be the $L^2$-orthogonal projection onto $X_\index$ and $\P_\index:\H^{-1/2}(\Gamma)\to X_\index\subseteq \H^{-1/2}(\Gamma)$ 
denote an arbitrary $\H^{-1/2}(\Gamma)$-stable projection onto $X_\index$. Then, 
there is a constant $\c{invtilde}>0$ depending only on the $\kappa$-shape regularity 
of $\mesh_\index$, the stability constant of $\P_\index$, 
on  $\partial\Omega$ as well as $\Gamma$ such that for 
all $\phi\in L^2(\Gamma)$ and $P_\index\in\{\Pi_\index,\P_\index\}$ 
\begin{align}\label{eq:stab:V}
 \norm{\meshsize^{1/2}\nabla_\Gamma\slp(1-P_\index)\phi}{L^2(\Gamma)}
+ \norm{\meshsize^{1/2}\dlp'(1-P_\index)\phi}{L^2(\Gamma)}
 &\le \c{invtilde} \norm{\meshsize^{1/2}(1-P_\index)\phi}{L^2(\Gamma)}.
\end{align}
\end{corollary}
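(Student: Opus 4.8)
The plan is to deduce Corollary~\ref{cor:stabV} directly from the continuous inverse estimate~\eqref{eq:invest:V} of Theorem~\ref{thm:invest} by choosing the weight function $\wght := \meshsize^{1/2}$ and applying~\eqref{eq:invest:V} to $\psi := (1-P_\index)\phi \in L^2(\Gamma)$. This weight is $\sigma$-admissible with a constant depending only on $\kappa$ (since neighbouring elements have comparable size by Lemma~\ref{lemma:patch}(\ref{item:lemma:patch-i})), and clearly $\norm{\wght/\meshsize^{1/2}}{L^\infty(\Gamma)} = 1$. Theorem~\ref{thm:invest} then yields
\begin{align*}
 \norm{\meshsize^{1/2}\nabla_\Gamma\slp(1-P_\index)\phi}{L^2(\Gamma)}
 + \norm{\meshsize^{1/2}\dlp'(1-P_\index)\phi}{L^2(\Gamma)}
 \le \c{inv}\big(\norm{(1-P_\index)\phi}{\H^{-1/2}(\Gamma)}
 + \norm{\meshsize^{1/2}(1-P_\index)\phi}{L^2(\Gamma)}\big).
\end{align*}
Thus it remains only to control the first term on the right-hand side, $\norm{(1-P_\index)\phi}{\H^{-1/2}(\Gamma)}$, by $\norm{\meshsize^{1/2}(1-P_\index)\phi}{L^2(\Gamma)}$.

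For that I would use the approximation/stability property of the projection $P_\index$ together with the inverse estimate~\eqref{eq:foo-1} (with $q_\index \equiv 0$). The key point is that $1-P_\index$ annihilates $\PP^0(\mesh_\index)$: for $P_\index = \Pi_\index$ this is orthogonality, and for $P_\index = \P_\index$ it is the projection property combined with $\PP^0(\mesh_\index) \subseteq X_\index$. Hence for any $\chi_\index \in \PP^0(\mesh_\index)$ one has $(1-P_\index)\phi = (1-P_\index)(\phi - \chi_\index)$, and using $\H^{-1/2}(\Gamma)$-stability of $P_\index$ (for $\Pi_\index$, the $L^2$-projection is $\H^{-1/2}(\Gamma)$-stable on $\kappa$-shape regular meshes, a standard fact; for $\P_\index$ it is assumed), one gets
\begin{align*}
 \norm{(1-P_\index)\phi}{\H^{-1/2}(\Gamma)}
 \lesssim \norm{\phi-\chi_\index}{\H^{-1/2}(\Gamma)}
 \lesssim \inf_{\chi_\index\in\PP^0(\mesh_\index)}\norm{\meshsize^{1/2}(\phi-\chi_\index)}{L^2(\Gamma)}
 \le \norm{\meshsize^{1/2}(1-\Pi_\index)\phi}{L^2(\Gamma)},
\end{align*}
where the second inequality is~\eqref{eq:foo-1} applied to $\Pi_\index^{\rm elt}(\phi-\chi_\index)\in\PP^0(\mesh_\index)$ — more cleanly, one estimates $\norm{\phi-\Pi_\index^{\rm elt}\phi}{\H^{-1/2}(\Gamma)}$ directly via~\eqref{eq:foo-1} with $\Psi_\index := \Pi_\index^{\rm elt}\phi$ and a triangle inequality, where $\Pi_\index^{\rm elt}$ is the elementwise $L^2$-projection onto piecewise constants. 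Finally, to pass from $\norm{\meshsize^{1/2}(1-\Pi_\index^{\rm elt})\phi}{L^2(\Gamma)}$ to $\norm{\meshsize^{1/2}(1-P_\index)\phi}{L^2(\Gamma)}$ one uses that $\meshsize^{1/2}(1-\Pi_\index^{\rm elt})$ is the local best approximation in the weighted $L^2$-norm, so it is bounded by $\norm{\meshsize^{1/2}(1-P_\index)\phi}{L^2(\Gamma)}$ up to a constant (again invoking $\PP^0(\mesh_\index)\subseteq X_\index$ so that $\Pi_\index^{\rm elt}(1-P_\index)\phi$ makes the cross term vanish, or a direct triangle-inequality argument with $\meshsize$-weighted local projections).

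The main obstacle I anticipate is purely bookkeeping: keeping the chain of projections consistent and making sure every constant depends only on $\kappa$, the Lipschitz character of $\partial\Omega$, $\Gamma$, and the stability constant of $\P_\index$. Two technical points deserve care. First, the $\H^{-1/2}(\Gamma)$-stability of the (global) $L^2$-projection $\Pi_\index$ on shape-regular meshes on surfaces is standard but should be cited or sketched, since $X_\index$ is a general closed subspace between $\PP^0(\mesh_\index)$ and $L^2(\Gamma)$ and one must argue that $\Pi_\index$ inherits stability (it does, because $\Pi_\index\Pi_\index^{\rm elt} = \Pi_\index^{\rm elt}$ is false in general — rather one uses $\norm{\Pi_\index\phi}{\H^{-1/2}(\Gamma)} = \sup_{v} \dual{\phi}{\Pi_\index v}/\norm{v}{\H^{1/2}(\Gamma)} \lesssim \norm{\phi}{\H^{-1/2}(\Gamma)}$ using $\H^{1/2}(\Gamma)$-stability of $\Pi_\index$, which in turn follows from $L^2$- and $\H^1(\Gamma)$-boundedness by interpolation, Facts~\ref{facts:sobolev}(\ref{item:facts:sobolev-iii})). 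Second, one must check that the weighted-$L^2$ term is handled without circularity: it already appears on both sides, so the argument above only manipulates the $\H^{-1/2}(\Gamma)$-term and never re-introduces $\slp$ or $\dlp'$, which is what makes the deduction clean.
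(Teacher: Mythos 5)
Your opening step matches the paper exactly: apply \eqref{eq:invest:V} with $\wght = \meshsize^{1/2}$ and $\psi = (1-P_\index)\phi$, which reduces the claim to controlling $\norm{(1-P_\index)\phi}{\H^{-1/2}(\Gamma)}$ by $\norm{\meshsize^{1/2}(1-P_\index)\phi}{L^2(\Gamma)}$. That part is sound.

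The gap is in how you try to establish this remaining estimate. You invoke \eqref{eq:foo-1}, but \eqref{eq:foo-1} reads (with $q_\index \equiv 0$)
\begin{equation*}
\norm{\meshsize^{1/2}\Psi_\index}{L^2(\Gamma)} \lesssim \norm{\Psi_\index}{H^{-1/2}(\Gamma)} \qquad \text{for }\Psi_\index \in \PP^0(\mesh_\index),
\end{equation*}
which is an \emph{inverse} estimate: it bounds the weighted $L^2$-norm of a \emph{discrete} function by its $H^{-1/2}$-norm. What you need is the reverse direction — bound the $\H^{-1/2}$-norm of the (non-discrete) error $(1-\Pi_\index)\phi$ by its weighted $L^2$-norm. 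These are not interchangeable. In fact the inequality $\norm{\psi}{\H^{-1/2}(\Gamma)} \lesssim \norm{\meshsize^{1/2}\psi}{L^2(\Gamma)}$ is \emph{false} for general $\psi \in L^2(\Gamma)$; already for $\psi$ a fixed nonzero constant, the left side stays bounded away from zero while the right side tends to zero as $\meshsize \to 0$. The estimate can only hold when $\psi$ carries extra structure. The paper supplies that structure by the \emph{orthogonality} of the $L^2$-projection and a duality argument: one first proves the direct (Bramble–Hilbert/Poincar\'e) bound $\norm{\meshsize^{-1/2}(1-\Pi_\index)w}{L^2(\Gamma)} \lesssim \norm{w}{H^{1/2}(\Gamma)}$ from elementwise approximation in $H^{1/2}$, and then writes
\begin{equation*}
\dual{(1-\Pi_\index)\phi}{w}_\Gamma = \dual{(1-\Pi_\index)\phi}{(1-\Pi_\index)w}_\Gamma
\le \norm{\meshsize^{1/2}(1-\Pi_\index)\phi}{L^2(\Gamma)}\,\norm{\meshsize^{-1/2}(1-\Pi_\index)w}{L^2(\Gamma)}
\end{equation*}
and takes the supremum over $w$, yielding \eqref{eq:H-1/2-estimate-L^2-projection}. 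Your proposal omits this duality step entirely and substitutes for it an estimate (\eqref{eq:foo-1}) that goes the wrong way; that is the hole.

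There is a secondary, more minor issue: you route the argument for $P_\index=\Pi_\index$ through $\H^{-1/2}(\Gamma)$-stability of $\Pi_\index$, which you call a standard fact but which is not assumed in the corollary and requires justification for a general intermediate space $\PP^0(\mesh_\index)\subseteq X_\index\subset L^2(\Gamma)$. The paper avoids this issue altogether: \eqref{eq:H-1/2-estimate-L^2-projection} is derived from orthogonality alone, and $\H^{-1/2}$-stability is only invoked for the \emph{given} $\P_\index$ (to write $\norm{(1-\P_\index)\phi}{\H^{-1/2}} \lesssim \norm{(1-\Pi_\index)\phi}{\H^{-1/2}}$ via $(1-\P_\index)(1-\Pi_\index) = 1-\P_\index$) and then closed with $(1-\P_\index)^2 = 1-\P_\index$. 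So even after fixing the main gap, adopting the paper's orthogonality route would let you drop the stability claim for $\Pi_\index$.
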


\begin{proof}

For arbitrary $w \in H^{1/2}(\partial\Omega)$ we get 
by transformation to the reference element and standard approximation results that
$\norm{(1-\Pi_\index) w}{L^2(T)}^2 \lesssim h(T) \norm{w}{H^{1/2}(T)}^2$, where 
we employ the Aronstein-Slobodeckii norm in the definition of $\|\cdot\|_{H^{1/2}(T)}$.
Hence, by summation over all $T \in \TT_\index$, using the Aronstein-Slobodeckii characterization of $\|\cdot\|_{H^{1/2}(\partial\Omega)}$, 
and then the characterization \eqref{eq:H1/2+s} of the norm $\|\cdot\|_{H^{1/2}(\Gamma)}$, we arrive at 
\begin{align*}
  \norm{h^{-1/2}(1-\Pi_\index) w}{L^2(\Gamma)} \lesssim \norm{w}{H^{1/2}(\Gamma)} \quad\text{for all } w \in H^{1/2}(\Gamma).
\end{align*}
Orthogonality of $\Pi_\index$ and a duality argument then shows (see~\cite[Theorem~4.1]{ccdpr:symm} for the analogous proof on polygonal boundaries.)
\begin{align}
\label{eq:H-1/2-estimate-L^2-projection}
 \norm{(1-\Pi_\index)\phi}{\H^{-1/2}(\Gamma)}
 \lesssim \norm{\meshsize^{1/2}(1-\Pi_\index)\phi}{L^2(\Gamma)}
 \quad\text{for all }\phi\in L^2(\Gamma).
\end{align}
Combining this estimate with the inverse estimate~\eqref{eq:invest:V}
 for $\psi = (1-\Pi_\index)\phi$ and $w_h = h^{1/2}$, we get 
\begin{align*}
 \norm{\meshsize^{1/2}\nabla_\Gamma\slp(1-\Pi_\index)\phi}{L^2(\Gamma)}
 + \norm{\meshsize^{1/2}\dlp'(1-\Pi_\index)\phi}{L^2(\Gamma)}
 \lesssim \norm{\meshsize^{1/2}(1-\Pi_\index)\phi}{L^2(\Gamma)}
 \text{ for all }\phi\in L^2(\Gamma).
\end{align*}
For an $\H^{-1/2}(\Gamma)$-stable projection $\P_\index$, we note that the projection
property of $\P_\index$ implies $(1-\P_\index)(1-\Pi_\index)= (1-\P_\index)$. 
This and elementwise stability of $\Pi_h$ imply, for all $\phi\in L^2(\Gamma)$,
\begin{align*}
 \norm{(1-\P_\index)\phi}{\H^{-1/2}(\Gamma)}
 \lesssim \norm{(1-\Pi_\index)\phi}{\H^{-1/2}(\Gamma)}
 \stackrel{\eqref{eq:H-1/2-estimate-L^2-projection}}{\lesssim} \norm{h^{1/2}(1-\Pi_\index)\phi}{L^{2}(\Gamma)}
 \lesssim \norm{\meshsize^{1/2}\phi}{L^2(\Gamma)}.
\end{align*}
Finally, we use the projection property $(1-\P_\index)^2 = (1-\P_\index)$
and argue as for $\Pi_\index$ to obtain
\begin{align*}
 \norm{\meshsize^{1/2}\nabla_\Gamma\slp(1-\P_\index)\phi}{L^2(\Gamma)}
 + \norm{\meshsize^{1/2}\dlp'(1-\P_\index)\phi}{L^2(\Gamma)}
 \lesssim \norm{\meshsize^{1/2}(1-\P_\index)\phi}{L^2(\Gamma)}
 \text{ for all }\phi\in L^2(\Gamma).
\end{align*}
This concludes the proof.
\end{proof}

One immediate consequence of Corollary~\ref{cor:stabV} is the efficiency of the
weighted residual error estimator $\eta_h$ from~\cite{cc1997,cms}: 
Suppose that $\slp$ is $\H^{-1/2}(\Gamma)$-elliptic (in the case $d = 2$, this can be enforced, for example, by 
the scaling requirement $\diam(\Omega)<1$). For $f\in H^1(\Gamma)$,
let $\phi\in \H^{-1/2}(\Gamma)$ be the unique solution of the weakly singular integral equation $\slp\phi=f$. 
Let $X_h\subset L^2(\Gamma)$ 
be a discrete space which contains at least the piecewise constants, i.e., $\PP^0(\TT_\index)\subseteq X_h$, and let 
$\Phi_h\in X_h$ be the unique Galerkin approximation of $\phi$ in $X_h$, i.e.,
\begin{align}
\label{eq:galerkin-slp}
 \dual{\slp(\phi-\Phi_h)}{\Psi_h}_\Gamma = 0
 \quad\text{for all }\Psi_h\in X_h.
\end{align}
Under these assumptions (and, strictly speaking, for polyhedral $\Gamma$),~\cite{cms} proves the reliability estimate
\begin{align}\label{eq:eta:symm}
 C_{\rm rel}^{-1}\,\norm{\phi-\Phi_h}{\H^{-1/2}(\Gamma)} 
 \le \,\eta_{h,\slp} := \norm{h^{1/2}\nabla_\Gamma(f-\slp\Phi_h)}{L^2(\Gamma)}.
\end{align}
The constant $C_{\rm rel}>0$ depends only on $\Gamma$ and the $\kappa$-shape regularity of $\TT_h$.
The following corollary provides the converse efficiency estimate with respect to some slightly stronger
weighted $L^2$-norm. We note that the additional assumption $\phi = \slp^{-1}f\in L^2(\Gamma)$ is in 
particular satisfied for $\Gamma=\partial\Omega$.

\begin{corollary}[Efficiency of $\eta_{h,\slp}$ for weakly singular integral equations]\label{cor:efficient:symm}
Let $\mesh_\index$ be a regular, $\kappa$-shape regular triangulation of $\Gamma$.
Assume $\phi = \slp^{-1}f\in L^2(\Gamma)$ and let $X_h\subseteq \H^{-1/2}(\Gamma)$ be a closed subspace with $\PP^0(\mesh_\index) \subseteq X_h \subset L^2(\Gamma)$. Let 
$\Phi_h \in X_h$ be given by \eqref{eq:galerkin-slp}. Then 
the weighted residual error estimator from~\eqref{eq:eta:symm} satisfies
\begin{align}\label{dp:efficient:symm}
 \eta_{h,\slp} \le C_{\rm eff}\,\norm{h^{1/2}(\phi-\Phi_h)}{L^2(\Gamma)},
\end{align}
where $C_{\rm eff}=\widetilde C_{\rm inv}>0$ is the constant from Corollary~\ref{cor:stabV}.
\end{corollary}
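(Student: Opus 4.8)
The plan is to derive the efficiency estimate \eqref{dp:efficient:symm} directly from the stability result of Corollary~\ref{cor:stabV} together with the Galerkin orthogonality \eqref{eq:galerkin-slp}. The key observation is that, because $\slp\phi = f$ and because $\PP^0(\mesh_\index) \subseteq X_h$, the Galerkin condition \eqref{eq:galerkin-slp} means precisely that $\Phi_h$ is the image of $\phi$ under an $\H^{-1/2}(\Gamma)$-stable projection onto $X_h$ (namely the Galerkin projection associated with the $\slp$-induced energy inner product, which is stable by ellipticity of $\slp$). Thus $\phi - \Phi_h = (1-\P_\index)\phi$ for an admissible choice of $\P_\index$ in the sense of Corollary~\ref{cor:stabV}.

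Concretely, I would proceed as follows. First, rewrite the estimator: since $f = \slp\phi$, we have $f - \slp\Phi_h = \slp(\phi-\Phi_h)$, hence $\eta_{h,\slp} = \norm{h^{1/2}\nabla_\Gamma \slp(\phi-\Phi_h)}{L^2(\Gamma)}$. Second, observe that the map $\P_\index : \H^{-1/2}(\Gamma) \to X_h$ sending $\phi \mapsto \Phi_h$ (the $\slp$-Galerkin projection) is a well-defined linear projection onto $X_h$ that is bounded on $\H^{-1/2}(\Gamma)$, with stability constant depending only on the ellipticity and continuity constants of $\slp$ (a standard C\'ea-type argument). Third, apply Corollary~\ref{cor:stabV} with this $\P_\index$ and with $P_\index = \P_\index$, which yields
\begin{align*}
 \norm{h^{1/2}\nabla_\Gamma\slp(1-\P_\index)\phi}{L^2(\Gamma)}
 \le \c{invtilde}\,\norm{h^{1/2}(1-\P_\index)\phi}{L^2(\Gamma)}.
\end{align*}
Since $(1-\P_\index)\phi = \phi - \Phi_h$, the left-hand side is exactly $\eta_{h,\slp}$ and the right-hand side is $\c{invtilde}\,\norm{h^{1/2}(\phi-\Phi_h)}{L^2(\Gamma)}$, giving \eqref{dp:efficient:symm} with $C_{\rm eff} = \c{invtilde}$.

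A couple of technical points deserve care. One must ensure that $\phi - \Phi_h \in L^2(\Gamma)$, which holds by the standing hypothesis $\phi = \slp^{-1}f \in L^2(\Gamma)$ and $\Phi_h \in X_h \subset L^2(\Gamma)$, so that the weighted $L^2$-norm on the right is finite and the convention \eqref{eq:convention} applies to $\slp(\phi-\Phi_h)$. One must also check that $\P_\index$ as defined is genuinely a projection (immediate: if $\phi \in X_h$ then $\Phi_h = \phi$ solves \eqref{eq:galerkin-slp}) and that it maps into $X_h \subseteq \H^{-1/2}(\Gamma)$ as required by the statement of Corollary~\ref{cor:stabV}; this uses $X_h \subseteq \H^{-1/2}(\Gamma)$ from the hypothesis. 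The main obstacle—more a matter of bookkeeping than real difficulty—is verifying the $\H^{-1/2}(\Gamma)$-stability of the $\slp$-Galerkin projection uniformly in $h$: this follows from the $\H^{-1/2}(\Gamma)$-ellipticity of $\slp$ (assumed, possibly after rescaling in $d=2$) via the Lax--Milgram/C\'ea estimate $\norm{\Phi_h}{\H^{-1/2}(\Gamma)} \lesssim \norm{\phi}{\H^{-1/2}(\Gamma)}$ with constant independent of $X_h$, hence of $\mesh_\index$. Once this is in hand, the result is simply an instantiation of Corollary~\ref{cor:stabV}.
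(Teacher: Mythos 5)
Your proof is correct and takes essentially the same route as the paper: both identify $\Phi_h = \P_h\phi$ with $\P_h$ the $\slp$-Galerkin projection (stable on $\H^{-1/2}(\Gamma)$ by C\'ea's lemma under the standing ellipticity assumption), rewrite $\eta_{h,\slp} = \norm{h^{1/2}\nabla_\Gamma \slp(\phi-\Phi_h)}{L^2(\Gamma)}$ via $f=\slp\phi$, and then invoke Corollary~\ref{cor:stabV} with $P_\index = \P_h$. The paper's proof is a one-liner; your write-up merely spells out the projection property and stability checks that the paper leaves implicit.
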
%

\begin{proof}
With the Galerkin projection $\P_h:\H^{-1/2}(\Gamma)\to X_h$ and $\Phi_h = \P_h\phi$, Corollary~\ref{cor:stabV} yields
$\eta_{h,\slp} = \norm{h^{1/2}\nabla_\Gamma\slp(\phi-\Phi_h)}{L^2(\Gamma)}
\lesssim \norm{h^{1/2}(\phi-\Phi_h)}{L^2(\Gamma)}$.
\end{proof}%

\begin{remark}[Stronger efficiency of 2D BEM]
While the efficiency estimate~\eqref{dp:efficient:symm} involves a slightly stronger norm 
on the right-hand side, particular situations (as, e.g., the 2D direct BEM formulation
of the Dirichlet problem~\cite{eps65}) permit to bound $\norm{h^{1/2}(\phi-\Phi_\index)}{L^2(\Gamma)}$
by $\norm{\phi-\Phi_\index}{\H^{-1/2}(\Gamma)}$ up to higher-order terms. In \cite{eps65}, this is achieved
by decomposing $\phi$ in a singular part associated with the vertices of $\Omega$ and a regular part; 
the higher-order terms depend only on the regular part of $\phi$.
\eremk
\end{remark}%

\subsubsection{Hypersingular integral equations}
\label{sec:efficiency-hypersingular}
Results similar to Corollary~\ref{cor:stabV} also hold for the double-layer integral operator
$\dlp$ and the hypersingular integral operator $\hyp$. Here, particularly interesting 
choices for the projection $\P_\index$ are Scott-Zhang type 
projections onto~$\widetilde\SS^{\q+1}(\mesh_\index)$; see~\cite{scottzhang} as well as 
the adaptation to BEM in~\cite[Section~3.2]{hypsing}.

\begin{corollary}\label{cor:stabW}
Let $\mesh_\index$ be a regular, $\kappa$-shape regular triangulation of $\Gamma$.
Let $X_\index\subseteq \H^{1/2}(\Gamma)$ be a closed subspace with 
$\widetilde\SS^1(\mesh_\index)\subseteq X_\index \subseteq \H^1(\Gamma)$. Let $\P_\index:\H^{1/2}(\Gamma)\to X_\index$
be an $\H^{1/2}(\Gamma)$-stable projection onto $X_\index$. Then, for all $v\in \H^1(\Gamma)$, 
\begin{align}\label{eq:stab:K}
 \norm{\meshsize^{1/2}\nabla_\Gamma\dlp(1-\P_\index)v}{L^2(\Gamma)}
+ \norm{\meshsize^{1/2}\hyp(1-\P_\index)\phi}{L^2(\Gamma)}
 &\le \c{invtilde} \norm{\meshsize^{1/2}\nabla_\Gamma(1-\P_\index)v}{L^2(\Gamma)}.
\end{align}
The constant $\c{invtilde}>0$ depends only on the $\kappa$-shape regularity 
of $\mesh_\index$, the stability constant of $\P_\index$, 
and $\Gamma$.
\end{corollary}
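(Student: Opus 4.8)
The plan is to mimic the proof of Corollary~\ref{cor:stabV}, replacing the simple-layer inverse estimate~\eqref{eq:invest:V} by the double-layer/hypersingular inverse estimate~\eqref{eq:invest:K} and replacing the $\H^{-1/2}$-approximation property of the $L^2$-projection by an $\H^{1/2}$-approximation property of a Scott-Zhang-type quasi-interpolation operator. First I would fix $v\in\H^1(\Gamma)$ and set $w:=(1-\P_\index)v\in\H^1(\Gamma)$. Since $\P_\index$ is a projection, $\P_\index w=0$ and hence $(1-\P_\index)w=w$; thus it suffices to prove the asserted estimate for all $w\in\H^1(\Gamma)$ with $\P_\index w=0$, in which case the right-hand side reads $\norm{\meshsize^{1/2}\nabla_\Gamma w}{L^2(\Gamma)}$. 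The weight $\wght:=\meshsize^{1/2}$ is $\sigma$-admissible with $\sigma$ depending only on $\kappa$ (Lemma~\ref{lemma:patch}(\ref{item:lemma:patch-i})) and satisfies $\norm{\wght/\meshsize^{1/2}}{L^\infty(\Gamma)}=1$, so applying~\eqref{eq:invest:K} to $w$ reduces the claim to
\begin{align}\label{eq:stabW:key}
 \norm{w}{\H^{1/2}(\Gamma)}\lesssim\norm{\meshsize^{1/2}\nabla_\Gamma w}{L^2(\Gamma)}\qquad\text{for all }w\in\H^1(\Gamma)\text{ with }\P_\index w=0.
\end{align}

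To establish~\eqref{eq:stabW:key}, I would bring in a Scott-Zhang-type operator $J_\index:\H^1(\Gamma)\to\widetilde\SS^1(\mesh_\index)$ that respects the homogeneous boundary condition on $\partial\Gamma$ (see~\cite{scottzhang} and the BEM adaptation in~\cite[Section~3.2]{hypsing}). Since $\widetilde\SS^1(\mesh_\index)\subseteq X_\index$ and $\P_\index$ reproduces $X_\index$, we have $\P_\index J_\index=J_\index$, hence $(1-\P_\index)(1-J_\index)=(1-\P_\index)$. Applying this to $w=(1-\P_\index)w$ and using the hypothesised $\H^{1/2}(\Gamma)$-stability of $\P_\index$ gives $\norm{w}{\H^{1/2}(\Gamma)}\lesssim\norm{(1-J_\index)w}{\H^{1/2}(\Gamma)}$. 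It then remains to show the fractional-order approximation bound $\norm{(1-J_\index)w}{\H^{1/2}(\Gamma)}\lesssim\norm{\meshsize^{1/2}\nabla_\Gamma w}{L^2(\Gamma)}$ for all $w\in\H^1(\Gamma)$, which is the counterpart of the $L^2$-projection estimate~\eqref{eq:H-1/2-estimate-L^2-projection} used in Corollary~\ref{cor:stabV}. For this I would combine the local approximation and stability properties of $J_\index$, namely $\norm{(1-J_\index)w}{L^2(T)}\lesssim h(T)\norm{\nabla_\Gamma w}{L^2(\omega_\index(T))}$ and $\norm{\nabla_\Gamma(1-J_\index)w}{L^2(T)}\lesssim\norm{\nabla_\Gamma w}{L^2(\omega_\index(T))}$, with the elementwise interpolation inequality $\seminorm{g}{H^{1/2}(T)}^2\lesssim h(T)^{-1}\norm{g}{L^2(T)}^2+h(T)\seminorm{g}{H^1(T)}^2$ (obtained by transformation to $\Tref$ from the reference interpolation inequality between $L^2(\Tref)$ and $H^1(\Tref)$, cf.\ Facts~\ref{facts:sobolev}(\ref{item:facts:sobolev-iii})), a localization of the $H^{1/2}(\partial\Omega)$-seminorm in the spirit of Faermann's estimates, and the bounded overlap of patches together with the local comparability of mesh sizes from Lemma~\ref{lemma:patch}; the zero extension $E_{0,\Gamma}(1-J_\index)w\in H^1(\partial\Omega)$ is legitimate because $J_\index$ preserves the boundary condition on $\partial\Gamma$. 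Summation over $T\in\mesh_\index$ then yields $\norm{(1-J_\index)w}{\H^{1/2}(\Gamma)}^2\lesssim\sum_{T\in\mesh_\index}h(T)\norm{\nabla_\Gamma w}{L^2(\omega_\index(T))}^2\lesssim\norm{\meshsize^{1/2}\nabla_\Gamma w}{L^2(\Gamma)}^2$, and tracking constants shows the resulting $\c{invtilde}$ depends only on $\kappa$, $\partial\Omega$, $\Gamma$, and the stability constant of $\P_\index$, as claimed.

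I expect the genuinely delicate step to be this last fractional-order approximation estimate for $1-J_\index$: because the weight $\meshsize^{1/2}$ is not a fixed scalar, a naive global interpolation of $(1-J_\index)w$ between $L^2(\Gamma)$ and $\H^1(\Gamma)$ does not produce the correct negative power of $h$, and one really has to localize the $H^{1/2}$-seminorm element by element before applying the scaling and approximation arguments, all the while keeping track of the zero extension that defines the $\H^{1/2}(\Gamma)$-norm. If one prefers to avoid this, the estimate $\norm{(1-J_\index)w}{\H^{1/2}(\Gamma)}\lesssim\norm{\meshsize^{1/2}\nabla_\Gamma w}{L^2(\Gamma)}$ may instead be quoted from the literature on Scott-Zhang projections in fractional Sobolev spaces (in analogy to the way~\eqref{eq:H-1/2-estimate-L^2-projection} is attributed to~\cite[Theorem~4.1]{ccdpr:symm} in Corollary~\ref{cor:stabV}); the remaining steps are then routine modifications of the proof of Corollary~\ref{cor:stabV}.
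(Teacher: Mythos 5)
Your proposal is correct and follows essentially the same route as the paper: both reduce the claim to the Scott--Zhang approximation bound $\norm{(1-J_\index)w}{\H^{1/2}(\Gamma)}\lesssim\norm{\meshsize^{1/2}\nabla_\Gamma(1-J_\index)w}{L^2(\Gamma)}$ (the paper cites this, together with a best-approximation refinement, from~\cite[Lemma~7]{hypsing}) and then combine it with the inverse estimate~\eqref{eq:invest:K}, the inclusion $\widetilde\SS^1(\mesh_\index)\subseteq X_\index$, the projection identity $(1-\P_\index)(1-J_\index)=(1-\P_\index)$, and $\H^{1/2}$-stability of $\P_\index$. Your outline of a from-scratch proof of the key approximation estimate (elementwise interpolation plus Faermann-type localization of the $H^{1/2}$-seminorm, and care with the zero extension across $\partial\Gamma$) is a reasonable substitute for the citation but is not the path the paper itself takes.
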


\begin{proof}
Arguing along the lines of the proof of Corollary~\ref{cor:stabV}, we first consider
the Scott-Zhang projection $J_\index:\H^{1/2}(\Gamma)\to\widetilde\SS^1(\mesh_\index)$
onto $\widetilde\SS^1(\mesh_\index)$. According to~\cite[Lemma~7]{hypsing} (strictly speaking, this 
result is formulated for polygonal boundaries only, but the proof transfers with minor changes to the present case), 
it holds
\begin{align}
\label{eq:cor:stabW-10}
 \norm{(1-J_\index)w}{\H^{1/2}(\Gamma)}
 \lesssim \min_{W_\index\in\widetilde\SS^1(\mesh_\index)}\norm{\meshsize^{1/2}\nabla(w-W_\index)}{L^2(\Gamma)}
 \le \norm{\meshsize^{1/2}\nabla(1-J_\index)w}{L^2(\Gamma)}
\end{align}
for all $w\in\H^1(\Gamma)$. The hidden constant depends only on $\Gamma$ 
and the $\kappa$-shape regularity of $\mesh_\index$.
Combining this with the inverse estimate~\eqref{eq:invest:K}
for $v = (1-J_\index)w$, we arrive at 
\begin{align*}
 \norm{\meshsize^{1/2}\nabla_\Gamma\dlp(1\!-\!J_\index)w}{L^2(\Gamma)}
 + \norm{\meshsize^{1/2}\hyp(1\!-\!J_\index)w}{L^2(\Gamma)}
 \lesssim \norm{\meshsize^{1/2}\nabla_\Gamma(1\!-\!J_\index)w}{L^2(\Gamma)}
 \text{ for all }w\in\H^1(\Gamma).
\end{align*}
The same arguments apply for any $\H^{1/2}$-stable projection
$\P_\index:\H^{1/2}(\Gamma)\to X_\index$, but additionally employ its stability. 
\end{proof}

As in Section~\ref{sec:efficiency-weakly-singular}, an immediate consequence of Corollary~\ref{cor:stabW} 
is the efficiency of the weighted residual error estimator $\eta_h$ from~\cite{cc1997,cmps} for 
the hypersingular integral equation: 
Suppose that
$\H^{1/2}(\Gamma)$ does not contain any characteristic function
$\chi_\omega$ for $\omega\subseteq\Gamma$ (this is in particular satisfied if
$\partial\Omega$ is connected and $\Gamma\subsetneqq\partial\Omega$).
Then, $\hyp:\H^{1/2}(\Gamma)\rightarrow H^{-1/2}(\Gamma)$ is an elliptic
isomorphism.
For $f\in L^2(\Gamma)$, let $u\in \H^{1/2}(\Gamma)$ be the unique solution 
of the hypersingular integral equation $\hyp u=f$. Let $X_h\subseteq \H^1(\Gamma)$ 
be a discrete space which contains at least the piecewise affines, 
i.e., $\widetilde\SS^1(\TT_\index)\subseteq X_h$. In addition, let 
$U_h\in X_h$ be the unique Galerkin approximation of $u$ in $X_h$, i.e.,
\begin{align}
\label{eq:galerkin-hyp}
 \dual{\hyp(u-U_h)}{V_h}_\Gamma = 0
 \quad\text{for all }V_h\in X_h.
\end{align}
Under these assumptions (and, strictly speaking, for polyhedral $\Gamma$),~\cite{cmps} proves the reliability estimate
\begin{align}\label{eq:eta:hypsing}
 C_{\rm rel}^{-1}\,\norm{u-U_h}{\H^{1/2}(\Gamma)} 
 \le \,\eta_{h,\hyp} := \norm{h^{1/2}(f-\hyp U_h)}{L^2(\Gamma)}.
\end{align}
The constant $C_{\rm rel}>0$ depends only on $\Gamma$ and the $\kappa$-shape regularity of $\TT_h$.
The following corollary provides the converse efficiency estimate with respect to some slightly stronger
weighted $H^1$-seminorm.

\begin{corollary}[Efficiency of $\eta_h$ for hypersingular integral equations]\label{cor:efficient:hypsing}
  Let $u = \hyp^{-1}f\in\H^1(\Gamma)$. Let $X_h \subseteq \H^{1/2}(\Gamma)$ 
be closed with $\widetilde \SS^{1}(\mesh_\index) \subseteq X_h \subseteq \H^1(\Gamma)$. Let 
$U_h \in X_h$ be given by \eqref{eq:galerkin-hyp}. Then
the weighted residual error estimator from~\eqref{eq:eta:hypsing} satisfies
\begin{align}
 \eta_{h,\hyp} \le C_{\rm eff}\,\norm{h^{1/2}\nabla_\Gamma(u-U_h)}{L^2(\Gamma)},
\end{align}
where $C_{\rm eff}=\widetilde C_{\rm inv}>0$ is the constant from Corollary~\ref{cor:stabW}.
\end{corollary}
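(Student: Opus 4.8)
The plan is to mimic the proof of Corollary~\ref{cor:efficient:symm} almost verbatim, now working in the $\H^{1/2}(\Gamma)$ setting instead of $\H^{-1/2}(\Gamma)$. First I would introduce the Galerkin projection $\P_\index:\H^{1/2}(\Gamma)\to X_\index$ defined by \eqref{eq:galerkin-hyp}, which is well defined and $\H^{1/2}(\Gamma)$-stable precisely because $\hyp$ is an elliptic isomorphism under the stated hypotheses (so $\edual{\hyp\cdot}{\cdot}_\Gamma$ is an equivalent inner product on $\H^{1/2}(\Gamma)$ and $\P_\index$ is the Ritz projection with respect to it). By uniqueness of the Galerkin solution, $U_h = \P_\index u$, so that $u-U_h = (1-\P_\index)u$.

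The second step is to rewrite the estimator: since $\hyp u = f$ in $H^{-1/2}(\Gamma)\supseteq L^2(\Gamma)$ and (by the assumption $u = \hyp^{-1}f\in\H^1(\Gamma)$ together with the mapping property \eqref{def:hyp} applied componentwise, or simply $f\in L^2(\Gamma)$ and $\hyp U_h\in L^2(\Gamma)$) one has
\begin{align*}
 \eta_{h,\hyp} = \norm{h^{1/2}(f-\hyp U_h)}{L^2(\Gamma)} = \norm{h^{1/2}\hyp(u-U_h)}{L^2(\Gamma)} = \norm{h^{1/2}\hyp(1-\P_\index)u}{L^2(\Gamma)}.
\end{align*}
Then I would invoke Corollary~\ref{cor:stabW} with this very $\P_\index$ (note $\widetilde\SS^1(\mesh_\index)\subseteq X_h\subseteq\H^1(\Gamma)$ is exactly the hypothesis required there, and $u\in\H^1(\Gamma)$), which bounds the right-hand side above by $\c{invtilde}\,\norm{h^{1/2}\nabla_\Gamma(1-\P_\index)u}{L^2(\Gamma)} = C_{\rm eff}\,\norm{h^{1/2}\nabla_\Gamma(u-U_h)}{L^2(\Gamma)}$. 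Dropping the (non-negative) $\norm{h^{1/2}\nabla_\Gamma\dlp(1-\P_\index)u}{L^2(\Gamma)}$-term from the left-hand side of \eqref{eq:stab:K} is what lets us isolate the $\hyp$-contribution; this finishes the argument.

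The only genuinely delicate point — and the one I would be careful to state explicitly — is that the Galerkin projection $\P_\index$ defined through \eqref{eq:galerkin-hyp} really is an $\H^{1/2}(\Gamma)$-stable projection onto $X_h$ in the sense of Corollary~\ref{cor:stabW}: stability follows from ellipticity and boundedness of $\hyp$ via Céa's lemma, and the projection property $\P_\index^2 = \P_\index$ from $\hyp$-orthogonality of the Galerkin error to $X_h$. Everything else is bookkeeping: matching the spaces $X_\index$/$X_h$ in the two corollaries, checking that $u = \hyp^{-1}f\in\H^1(\Gamma)$ is exactly the regularity needed so that $(1-\P_\index)u$ lies in the domain of Corollary~\ref{cor:stabW}, and observing that $f-\hyp U_h\in L^2(\Gamma)$ so the $L^2$-norm in $\eta_{h,\hyp}$ makes sense. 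Since the structure is a carbon copy of the weakly singular case, the proof can be kept to two or three lines in the paper, as the authors indeed do for Corollary~\ref{cor:efficient:symm}.

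\begin{proof}
Since $\hyp$ is an elliptic isomorphism, the bilinear form $\edual{\hyp\cdot}{\cdot}_\Gamma$ induces an equivalent scalar product on $\H^{1/2}(\Gamma)$, and \eqref{eq:galerkin-hyp} defines the associated Ritz projection $\P_h:\H^{1/2}(\Gamma)\to X_h$, which is an $\H^{1/2}(\Gamma)$-stable projection onto $X_h$ with $U_h = \P_h u$. Using $\hyp u = f$ together with $f,\hyp U_h\in L^2(\Gamma)$, we obtain
$\eta_{h,\hyp} = \norm{h^{1/2}(f-\hyp U_h)}{L^2(\Gamma)} = \norm{h^{1/2}\hyp(1-\P_h)u}{L^2(\Gamma)}$.
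Since $\widetilde\SS^1(\mesh_\index)\subseteq X_h\subseteq\H^1(\Gamma)$ and $u\in\H^1(\Gamma)$, Corollary~\ref{cor:stabW} applies and yields, after dropping the non-negative term $\norm{\meshsize^{1/2}\nabla_\Gamma\dlp(1-\P_h)u}{L^2(\Gamma)}$ from its left-hand side,
\begin{align*}
 \eta_{h,\hyp} = \norm{h^{1/2}\hyp(1-\P_h)u}{L^2(\Gamma)} \le \c{invtilde}\,\norm{\meshsize^{1/2}\nabla_\Gamma(1-\P_h)u}{L^2(\Gamma)} = C_{\rm eff}\,\norm{h^{1/2}\nabla_\Gamma(u-U_h)}{L^2(\Gamma)},
\end{align*}
which is the asserted estimate.
\end{proof}
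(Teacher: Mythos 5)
Your proposal is correct and follows essentially the same route as the paper's own proof: identify $U_h$ with the Galerkin (Ritz) projection $\P_h u$, use $\hyp u = f$ to rewrite $\eta_{h,\hyp}$ as $\norm{h^{1/2}\hyp(1-\P_h)u}{L^2(\Gamma)}$, and invoke Corollary~\ref{cor:stabW}; you simply spell out the $\H^{1/2}$-stability of $\P_h$ and the dropping of the $\dlp$-term, which the paper leaves implicit.
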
%

\begin{proof}
With the Galerkin projection $\P_h:\H^{1/2}(\Gamma)\to X_h$ and $U_h = \P_h u$, Corollary~\ref{cor:stabW} yields
$\eta_{h,\hyp} = \norm{h^{1/2}\hyp(u-U_h)}{L^2(\Gamma)}
\lesssim \norm{h^{1/2}\nabla_\Gamma(u-U_h)}{L^2(\Gamma)}$.
\end{proof}%

\begin{remark}
If $\Gamma=\partial\Omega$ is connected, 
the kernel of $\hyp$ is the space f constant functions on $\Gamma$.
Therefore, $\hyp:H^{1/2}_\star(\partial\Omega) \to
H^{-1/2}_\star(\partial\Omega)$ is an elliptic isomorphism, where
$H^{s}_\star(\partial\Omega) := \set{v\in H^{s}(\partial\Omega)}{\dual{v}{1}_{\partial\Omega}=0}$ for
$|s|\le1$. Recall that $\hyp:H^{s}_\star(\partial\Omega)\to H^{s-1}_\star(\partial\Omega)$ is
an isomorphism for all $0\le s\le 1$. For $f\in H^0_\star(\partial\Omega)$, the solution
$u := \hyp^{-1}f$ thus has additional regularity $u\in H^1_\star(\partial\Omega)$, and
Corollary~\ref{cor:efficient:hypsing} holds accordingly.
\end{remark}%

\subsubsection{Remarks on the extension to $hp$-BEM}
The above efficiency statements are formulated for the $h$-version BEM. They do generalize to the $hp$-version. 
\newline 
Since the corresponding reliability estimates have only been formulated for closed surfaces $\Gamma = \partial\Omega$
and affine element maps in \cite{km14}, we restrict the following result to this setting: 
\begin{corollary}
\label{cor:efficiency-hp-BEM}
Let $d \in \{2,3\}$, $\Gamma = \partial\Omega$, and $\mesh_\index$ a regular,
$\kappa$-shape regular triangulation of $\Gamma$. Assume that the element maps are affine. 
Let $q_h$ be a $\sigma$-admissible polynomial degree distribution. 
Then there exists $C > 0$ depending only on $\partial\Omega$, the $\kappa$-shape regularity of $\mesh_\index$, 
and $\sigma$ such that the following holds: 
\begin{enumerate}[(i)]
\item
Let $\slp:H^{-1/2}(\partial\Omega) \rightarrow H^{1/2}(\partial\Omega)$ be an isomorphism.
Let $\phi = \slp^{-1} f$ for some $f \in H^1(\partial\Omega)$. 
Set $X_h:= \PP^\q(\mesh_\index)$ 
and let $\Phi_{hp} \in X_h$ be the Galerkin solution given by \eqref{eq:galerkin-slp}. Then:
\begin{align}
\label{eq:cor:efficiency-hp-BEM-10} 
C^{-1} \|\phi - \Phi_{hp}\|_{H^{-1/2}(\partial\Omega)} & \leq 
\eta_{hp,\slp}:= \|(h/(1+q_h))^{1/2} \nabla_\Gamma (f - \slp \Phi_{hp})\|_{L^2(\partial\Omega)},  \\
\label{eq:cor:efficiency-hp-BEM-20} 
\eta_{hp,\slp} &\leq C \|(h/(1+q_h))^{1/2} (\phi - \Phi_{hp})\|_{L^2(\partial\Omega)}. 
\end{align}
\item
Let $\Gamma$ be connected and $u = \hyp^{-1} f$ for some $f \in H^0_\star(\partial\Omega)$. 
Set $X_h:= \SS^{\q+1}(\mesh_\index)\cap H^1_\star(\partial\Omega)$ 
and let $U_{hp} \in X_h$ be the Galerkin solution given by \eqref{eq:galerkin-hyp}. Then:
\begin{align}
\label{eq:cor:efficiency-hp-BEM-30} 
C^{-1} \|u - U_{hp}\|_{H^{1/2}(\partial\Omega)} & \leq 
\eta_{hp,\hyp}:= \|(h/(1+q_h))^{1/2} (f - \hyp U_{hp})\|_{L^2(\partial\Omega)},  \\
\label{eq:cor:efficiency-hp-BEM-40} 
\eta_{hp,\hyp} &\leq C \Bigl[ \|(h/(1+q_h))^{1/2} \nabla_\Gamma (u - U_{hp})\|_{L^2(\partial\Omega)}  \\
\nonumber 
& \mbox{} \qquad  + \|(h/(1+q_h))^{1/2} (u - U_{hp})\|_{L^2(\partial\Omega)}
\Bigr].
\end{align}
\end{enumerate}
\end{corollary}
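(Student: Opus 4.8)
The plan is to take the two reliability bounds \eqref{eq:cor:efficiency-hp-BEM-10} and \eqref{eq:cor:efficiency-hp-BEM-30} verbatim from~\cite{km14} and to concentrate on the efficiency estimates \eqref{eq:cor:efficiency-hp-BEM-20} and \eqref{eq:cor:efficiency-hp-BEM-40}. In both cases the mechanism is the one used for Corollaries~\ref{cor:stabV} and~\ref{cor:stabW}: I introduce the weight $\wght := (\meshsize/(1+q_h))^{1/2}$, which is elementwise constant, is $\sigma'$-admissible with $\sigma'$ depending only on $\kappa$ and $\sigma$, and satisfies $\norm{\wght/\meshsize^{1/2}}{L^\infty(\Gamma)} = \norm{(1+q_h)^{-1/2}}{L^\infty(\Gamma)} \le 1$; I rewrite the residual via $f - \slp\Phi_{hp} = \slp(\phi - \Phi_{hp})$, respectively $f - \hyp U_{hp} = \hyp(u - U_{hp})$; and I apply Theorem~\ref{thm:invest}. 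Since $\Gamma = \partial\Omega$, the spaces $\H^{\pm 1/2}(\Gamma)$ and $H^{\pm 1/2}(\partial\Omega)$ coincide throughout, and by hypothesis the element maps are affine.

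For the weakly singular case, \eqref{eq:invest:V} applied with $\psi := \phi - \Phi_{hp} \in L^2(\partial\Omega)$ (note $\phi \in L^2(\partial\Omega)$ since $\Gamma = \partial\Omega$) gives $\eta_{hp,\slp} = \norm{\wght\nabla_\Gamma\slp(\phi-\Phi_{hp})}{L^2(\Gamma)} \le \c{inv}\big(\norm{\phi-\Phi_{hp}}{H^{-1/2}(\partial\Omega)} + \norm{\wght(\phi-\Phi_{hp})}{L^2(\Gamma)}\big)$, and the second summand is already the right-hand side of \eqref{eq:cor:efficiency-hp-BEM-20}. To absorb the first summand I would prove the $hp$-explicit analogue of \eqref{eq:H-1/2-estimate-L^2-projection}, namely $\norm{(1-\Pi_h)\phi}{H^{-1/2}(\partial\Omega)} \lesssim \norm{\wght(1-\Pi_h)\phi}{L^2(\Gamma)}$ for the $L^2(\Gamma)$-orthogonal projection $\Pi_h$ onto $\PP^\q(\mesh_\index)$; this follows exactly as in the proof of Corollary~\ref{cor:stabV}, but starting from the $hp$-explicit elementwise bound $\norm{(1-\Pi_h)w}{L^2(T)}^2 \lesssim (\meshsize(T)/(1+q_h(T)))\,\norm{w}{H^{1/2}(T)}^2$ (with the Aronstein--Slobodeckii norm on $T$, cf.\ Lemma~\ref{lemma:hpinvest-beweis}) followed by the same duality argument. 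Then C\'ea's lemma for the $\slp$-elliptic problem gives $\norm{\phi-\Phi_{hp}}{H^{-1/2}(\partial\Omega)} \lesssim \norm{(1-\Pi_h)\phi}{H^{-1/2}(\partial\Omega)}$, and the elementwise best-approximation property of $\Pi_h$ together with $\wght|_T$ being constant gives $\norm{\wght(1-\Pi_h)\phi}{L^2(\Gamma)} \le \norm{\wght(\phi-\Phi_{hp})}{L^2(\Gamma)}$; chaining these three bounds yields \eqref{eq:cor:efficiency-hp-BEM-20}.

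For the hypersingular case, \eqref{eq:invest:K} applied with $v := u - U_{hp} \in \H^1(\Gamma)$ yields $\eta_{hp,\hyp} = \norm{\wght\hyp(u-U_{hp})}{L^2(\Gamma)} \le \c{inv}\big(\norm{u-U_{hp}}{H^{1/2}(\partial\Omega)} + \norm{\wght\nabla_\Gamma(u-U_{hp})}{L^2(\Gamma)}\big)$, so only $\norm{u-U_{hp}}{H^{1/2}(\partial\Omega)}$ has to be bounded by the right-hand side of \eqref{eq:cor:efficiency-hp-BEM-40}. Since $\hyp$ is $H^{1/2}_\star(\partial\Omega)$-elliptic and $u, U_{hp} \in H^1_\star(\partial\Omega)$, C\'ea's lemma reduces this to estimating $\norm{u-W_h}{H^{1/2}(\partial\Omega)}$ for a convenient $W_h \in X_h$. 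With the Galerkin error $e := u - U_{hp} \in H^1_\star(\partial\Omega)$, I would take $W_h := U_{hp} + \big(J_{hp}e - \overline{J_{hp}e}\big) \in X_h$, where $J_{hp}: H^1(\partial\Omega) \to \SS^{\q+1}(\mesh_\index)$ is an $hp$-robust Scott--Zhang type quasi-interpolation operator that reproduces $\SS^{\q+1}(\mesh_\index)$ and $\overline{(\cdot)}$ denotes the integral mean over $\partial\Omega$; then $u - W_h = (e - J_{hp}e) + \overline{J_{hp}e - e}$ because $\overline{e} = 0$, so that $\norm{u-W_h}{H^{1/2}(\partial\Omega)} \lesssim \norm{e - J_{hp}e}{H^{1/2}(\partial\Omega)} + \norm{e - J_{hp}e}{L^2(\partial\Omega)}$. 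It then remains to combine the $hp$-explicit local bounds $\norm{(1-J_{hp})w}{L^2(T)} \lesssim (\meshsize(T)/(1+q_h(T)))\,\norm{\nabla_\Gamma w}{L^2(\omega_\index(T))}$ and $\norm{(1-J_{hp})w}{H^1(T)} \lesssim \norm{\nabla_\Gamma w}{L^2(\omega_\index(T))}$ — obtainable by combining the Scott--Zhang construction~\cite{scottzhang} with $p$-explicit polynomial approximation as in~\cite{kmr14} — by interpolation (as in the proof of Corollary~\ref{cor:stabW}) and summation over $T$, giving $\norm{e - J_{hp}e}{H^{1/2}(\partial\Omega)} + \norm{e - J_{hp}e}{L^2(\partial\Omega)} \lesssim \norm{\wght\nabla_\Gamma e}{L^2(\Gamma)} + \norm{\wght e}{L^2(\Gamma)}$. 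Inserting $e = u - U_{hp}$ gives \eqref{eq:cor:efficiency-hp-BEM-40}.

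The step I expect to be the main obstacle is the $hp$-explicit quasi-interpolation bound in the \emph{global} $H^{1/2}(\partial\Omega)$-norm used in the hypersingular case: unlike the elementwise $L^2$- and $H^1$-bounds, the Aronstein--Slobodeckii double integral does not split over elements, and controlling the inter-element contribution uniformly in both $\meshsize$ and $q_h$ — so that, in the end, only the weighted surface gradient and, at worst, the weighted $L^2$ remainder appearing in \eqref{eq:cor:efficiency-hp-BEM-40} survive — is the technical heart of the argument; it is the $hp$-generalization of \cite[Lemma~7]{hypsing}, i.e., of \eqref{eq:cor:stabW-10}. By contrast, the weakly singular case is comparatively routine once the $hp$-version of \eqref{eq:H-1/2-estimate-L^2-projection} is available, which in turn rests only on the $hp$-explicit polynomial approximation results already invoked in the proof of Corollary~\ref{cor:invest}.
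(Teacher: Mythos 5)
Your proposal is correct and follows essentially the same route as the paper: reliability is cited from \cite{km14}; the weight $w_h=(h/(1+q_h))^{1/2}$ is inserted into Theorem~\ref{thm:invest}; the $H^{-1/2}$- (resp.\ $H^{1/2}$-) term is then absorbed via the $hp$-explicit $L^2$-projection estimate (the paper cites \cite[Thm.~3.8]{km14} where you propose a duality argument from an elementwise $hp$-approximation bound) and, for the hypersingular case, an $hp$-robust Scott--Zhang type quasi-interpolant $J_{hp}$ onto $X_{hp}$ with $H^{1/2}$-approximation bound (the paper assembles this from \cite[Lem.~3.7 and Lem.~3.10]{km14}, including the same mean-zero correction you use). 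Your identification of the global $H^{1/2}$-bound for $J_{hp}$ as the technical heart is accurate; the only cosmetic difference is that you invoke C\'ea's lemma where the paper uses the algebraic projection identities $(1-\P_{hp})=(1-\P_{hp})(1-\Pi_{hp})$ and $(1-\P_{hp})=(1-\P_{hp})(1-J_{hp})(1-\P_{hp})$, which is the same content.
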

\begin{proof}
The reliability bounds \eqref{eq:cor:efficiency-hp-BEM-10}, \eqref{eq:cor:efficiency-hp-BEM-30} 
are taken from \cite[Cor.~{3.9}, Cor.~{3.12}]{km14}. 
\newline 
For the proof of \eqref{eq:cor:efficiency-hp-BEM-20}, we let $\Pi_{hp}$ be the $L^2(\partial\Omega)$-projection
and $\P_{hp}$ be the Galerkin projection.
We note that the analogue of 
\eqref{eq:H-1/2-estimate-L^2-projection} is 
\begin{equation}
\label{eq:H-1/2-estimate-L^2-projection-hp} 
 \|(1 - \P_{hp}) \psi\|_{H^{-1/2}(\partial\Omega)} \lesssim 
\|(1 - \Pi_{hp}) \psi\|_{H^{-1/2}(\partial\Omega)} 
\lesssim \|(h/(1+q_h))^{1/2} \psi\|_{L^2(\partial\Omega)}
\qquad \forall \psi \in L^2(\partial\Omega)
\end{equation}
(cf.~\cite[Thm.~{3.8}]{km14}). Hence, proceeding as in the proof of Corollary~\ref{cor:stabV} 
with $w_h = (h/(1+q_h))^{1/2}$ we get 
\begin{align*}
\|w_h \slp(1 - \P_{hp}) \phi  \|_{L^2(\partial\Omega)} 
&\lesssim  \|(1+q_h)^{-1/2} \|_{L^\infty(\partial\Omega)} 
\|(1 - \P_{hp})^2 \phi\|_{H^{-1/2}(\partial\Omega)} + \|w_h (1 - \P_h) \phi\|_{L^2(\partial\Omega)}  \\
&\stackrel{\eqref{eq:H-1/2-estimate-L^2-projection-hp} }{\lesssim} 
\|(1+q_h)^{-1/2} \|_{L^\infty(\partial\Omega)} 
\|w_h (1 - \P_{hp}) \phi\|_{L^2(\partial\Omega)}
+\|w_h (1 - \P_{hp}) \phi\|_{L^2(\partial\Omega)}.
\end{align*}
The proof of \eqref{eq:cor:efficiency-hp-BEM-40} proceeds along similar lines. The key is the analog
of \eqref{eq:cor:stabW-10}. Combining \cite[Lem.~{3.7}]{km14} and 
the proof of \cite[Lem.~{3.10}]{km14} produces an approximation operator 
$J^\prime_{hp}:H^1(\partial\Omega) \rightarrow \SS^{\q+1}(\mesh_\index)$ with 
\begin{equation*}
\|(1 - J^\prime_{hp}) v\|_{H^{1/2}(\partial\Omega)} \lesssim 
\|(h/(1+q_h))^{1/2} \nabla_\Gamma v\|_{L^2(\partial\Omega)} 
+ \|(h/(1+q_h))^{1/2} v\|_{L^2(\partial\Omega)}. 
\end{equation*}
Finally, an operator $J_{hp}:H^1_\ast(\partial\Omega) \rightarrow X_{hp}$ is then obtained by 
setting $J_{hp} v:= J^\prime_{hp}v - \overline{J^\prime_{hp} v}$, where the overbar denotes
the average over $\partial\Omega$. It is easy to see that $J_{hp}$ has the same approximation properties 
as $J^\prime_{hp}$ on the  space $H^1_\ast(\partial\Omega)$. Proceeding as in the proof of 
Corollary~\ref{cor:stabV} or \ref{cor:stabW} finishes the proof. 
\end{proof}

\section{Far-field and near-field estimates for the simple-layer potential}
\label{section:invest:aux}

\noindent
The proof of Theorem~\ref{thm:invest} is based on decomposing the pertinent potentials 
into ``far-field'' and ``near-field'' contributions. In the present section, we analyze
the decomposition for the simple-layer potential and provide inverse estimates for both 
components. Section~\ref{sec:nearfield-slp} is concerned with inverse estimates for the 
near-field parts, which essentially follow from scaling arguments, whereas 
Section~\ref{sec:farfield-slp} deals with the far-field part. 
Throughout the section, we let 
\begin{equation}
\label{eq:assumption-on-psi} 
\psi\in L^2(\Gamma) \mbox{ and assume that $\psi$ is 
extended by zero to $\partial\Omega\setminus \overline{\Gamma}$,}
\end{equation}
i.e., we identify $\psi$ with $E_{0,\Gamma} \psi$.

\subsection{Decomposition into near-field and far-field}
\label{sec:notation-slp}
For a parameter $\delta > 0$, we define for each element $T\in\mesh_\index$ the neighborhood
$U_T$ of $T$ by 
\begin{align}\label{dp:UT}
 T \subset U_T := \bigcup_{x\in T} B_{2\delta \meshsize(T)}(x).
\end{align}
Since $\partial\Omega$ is Lipschitz and $\Gamma$ stems from a Lipschitz dissection and by $\kappa$-shape regularity of $\mesh_\index$,
we can fix the parameter $\delta > 0$ and find $M\in\N$ (both $\delta$ and $M$ are independent 
of $\TT_\index$) such that the following two conditions are satisfied:
\begin{enumerate}
\item[(a)]
$\Gamma\cap U_T$ is contained in the patch $\omega_\index(T)$ of $T$ (see~\eqref{def:patch} for the definition), i.e.,
\begin{align}
\label{UT:patch}
  \Gamma\cap U_T \subseteq \omega_\index(T). 
\end{align}
\item[(b)]
The covering $\Gamma\subseteq\bigcup_{T\in\mesh_\index}U_T$ is locally finite with a uniform bound, i.e.,  
\begin{align}\label{UT:overlap}
 \#\set{U_T}{T\in\mesh_\index\text{ and }x\in U_T}\le M
 \quad\text{for all }x\in\R^d.
\end{align}
\end{enumerate}
Finally, we fix a bounded domain $U\subset\R^d$ such that
\begin{align}\label{def:U}
U_T \subset U
\quad\text{for all }T\in\mesh_\index. 
\end{align}
It will be important that $U$ is chosen independently of $\TT_\index$.
To deal with the non-locality of the integral operators, we define
for $T\in\mesh_\index$ the near-field 
 $u_{\slp,T}^\near$ 
and the far-field
$u_{\slp,T}^\far$
of the simple-layer potential $u_\slp := \widetilde\slp\psi$ by
\begin{align}
\label{eq:uslp_far_uslp_near}
 u_{\slp,T}^\near := \widetilde\slp(\psi\chi_{\Gamma\cap U_T})
 \quad\text{and}\quad
 u_{\slp,T}^\far := \widetilde\slp(\psi\chi_{\Gamma\setminus U_T}),
\end{align}
where $\chi_\omega$ denotes the characteristic function of the set $\omega\subseteq\R^d$.
We have the obvious identity
\begin{align}\label{eq:decompnearfar}
 u_\slp = \widetilde\slp\psi =u_{\slp,T}^\near + u_{\slp,T}^\far
 \quad\text{for all }T\in\mesh_\index.
\end{align}
In our analysis, we will treat $u_{\slp,T}^\near$ and $u_{\slp,T}^\far$ separately, starting
with the simpler case of $u_{\slp,T}^\near$. 

\subsection{Inverse estimates for the near-field part $u_{\slp,T}^\near$}
\label{sec:nearfield-slp}
The near-field parts of a potential can be treated with local arguments and the stability properties of the associated
boundary integral operators.

\begin{lemma}\label{lemma:nearfield:V:1}
There exists a constant $\setc{xxx} > 0$ depending only on $\partial\Omega$, $\Gamma$,
and the $\kappa$-shape regularity of $\mesh_\index$ such that for arbitrary 
$T \in \mesh_\index$ and $\Psi_\index^T \in \PP^0(\mesh_\index)$
with $\supp\left( \Psi_\index^T \right)\subseteq\overline{\omega_\index(T)}$ it holds 
\begin{align*}
\norm{\nabla\widetilde\slp\Psi_\index^T}{L^2(U_T)}
  \leq \c{xxx} 
\norm{\meshsize^{1/2}\Psi_\index^T}{L^2(\omega_\index(T))}.
\end{align*}%
\end{lemma}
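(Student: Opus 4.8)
The plan is to reduce the inequality to a fixed reference configuration of unit diameter by a dilation argument, and there to invoke the $H^1$-mapping property \eqref{eq:mapping-properties-potentials} of the simple-layer potential together with the elementary embedding $\|\cdot\|_{H^{-1/2}(\partial D)}\le\|\cdot\|_{L^2(\partial D)}$. Write $\rho:=\meshsize(T)$. By Lemma~\ref{lemma:patch}~(\ref{item:lemma:patch-i})--(\ref{item:lemma:patch-ii}) the patch $\omega_\index(T)$ consists of at most $C$ elements, each of diameter $\simeq\rho$, so that $\omega_\index(T)$ and, by \eqref{dp:UT}, also $U_T$ lie in a ball $B_{c\rho}(z)$ with $z\in T$ and $c$ depending only on $\kappa$. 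Since $\supp\Psi_\index^T\subseteq\overline{\omega_\index(T)}$, the value $\widetilde\slp\Psi_\index^T(x)=\int_{\partial\Omega}G(x,y)\Psi_\index^T(y)\,dy$ for $x\in U_T$ depends only on $\Psi_\index^T$ and on the boundary piece $\partial\Omega\cap B_{2c\rho}(z)$, which after a rigid motion is the graph of a Lipschitz function whose Lipschitz constant is bounded solely by the Lipschitz character of $\partial\Omega$. I would therefore replace it by a bounded Lipschitz domain $D^\ast$ of diameter $\simeq\rho$, with Lipschitz character controlled solely by that of $\partial\Omega$, coinciding with $\Omega$ inside $B_{2c\rho}(z)$; then the potentials $\widetilde\slp_{\partial\Omega}\Psi_\index^T$ and $\widetilde\slp_{D^\ast}\Psi_\index^T$ agree on $U_T$.

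Now dilate everything by the factor $\rho^{-1}$ about $z$: $D^\ast$ becomes a domain $\widehat D$ of diameter $\simeq1$ belonging to a family with uniformly bounded Lipschitz character, the sets $U_T$, $\omega_\index(T)$ become sets $\widehat U_T$, $\widehat\omega$ of diameter $\simeq1$ (with $\widehat\omega\subseteq\partial\widehat D$), and $\Psi_\index^T$ becomes $\widehat\Psi:=\Psi_\index^T(\rho\,\cdot\,)$ supported in $\overline{\widehat\omega}$. Using the homogeneity of $G$ — $G(\rho\widehat x,\rho\widehat y)=\rho^{-(d-2)}G(\widehat x,\widehat y)$ for $d\ge3$, and $G(\rho\widehat x,\rho\widehat y)=G(\widehat x,\widehat y)-\tfrac1{2\pi}\log\rho$ for $d=2$, the additive constant being annihilated by $\nabla$ — a one-line computation gives the scale invariance $(\nabla\widetilde\slp_{D^\ast}\Psi_\index^T)(\rho\widehat x)=\nabla_{\widehat x}\widetilde\slp_{\widehat D}\widehat\Psi(\widehat x)$. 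Transforming the three $L^2$-integrals and then applying, on the reference scale, the mapping property \eqref{eq:mapping-properties-potentials} for $\widehat D$ followed by $\|\cdot\|_{H^{-1/2}(\partial\widehat D)}\le\|\cdot\|_{L^2(\partial\widehat D)}$, I obtain
\begin{align*}
\norm{\nabla\widetilde\slp\Psi_\index^T}{L^2(U_T)}
=\rho^{d/2}\norm{\nabla\widetilde\slp_{\widehat D}\widehat\Psi}{L^2(\widehat U_T)}
\lesssim\rho^{d/2}\norm{\widehat\Psi}{L^2(\widehat\omega)}
=\rho^{d/2}\rho^{-(d-1)/2}\norm{\Psi_\index^T}{L^2(\omega_\index(T))}
=\rho^{1/2}\norm{\Psi_\index^T}{L^2(\omega_\index(T))},
\end{align*}
and since $\meshsize\simeq\rho$ on $\omega_\index(T)$ by Lemma~\ref{lemma:patch}~(\ref{item:lemma:patch-i}) the right-hand side is $\simeq\norm{\meshsize^{1/2}\Psi_\index^T}{L^2(\omega_\index(T))}$, which is the asserted bound.

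The one genuinely technical point is that the constant in the mapping property \eqref{eq:mapping-properties-potentials} for $\widehat D$ must be uniform in $\rho$, i.e.\ uniform over the family of rescaled model domains; this is exactly why I prefer to work with the bounded local model $D^\ast$ rather than dilate $\Omega$ itself (which would yield an unbounded domain, awkward for the logarithmic kernel when $d=2$), since all the $\widehat D$ then have diameter $\simeq1$ and Lipschitz character bounded only in terms of that of $\partial\Omega$. I note in passing that $\Psi_\index^T\in\PP^0(\mesh_\index)$ is not actually used here; alternatively one can dispense with the mapping property and argue elementwise, writing $\Psi_\index^T=\sum_{T'\subseteq\omega_\index(T)}c_{T'}\chi_{T'}$ and using $|\nabla_xG(x,y)|\lesssim|x-y|^{-(d-1)}$ together with $\int_{T'}|x-y|^{-(d-1)}\,dy\lesssim1+\log\bigl(\rho/\mathrm{dist}(x,T')\bigr)$, whose square is integrable over the tube $U_T$ — this reproves the same estimate by a direct kernel computation.
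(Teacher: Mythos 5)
Your main argument takes a genuinely different route from the paper's. The paper exploits piecewise‐constancy directly: it pulls the constants $\Psi_\index^T|_{T'}$ out of the integral, reduces to the single quantity $\int_{U_T}\bigl(\int_{T'}|\nabla_xG(x,y)|\,dy\bigr)^2\,dx$, and bounds that by $h(T)^d$ via a bilipschitz local chart and an explicit scaling of the kernel integral to unit size. You instead dilate a \emph{local model domain} $D^\ast$ to unit scale, use the scale invariance of $\nabla\widetilde\slp$, and invoke the $H^{-1/2}\to H^1$ mapping property \eqref{eq:mapping-properties-potentials} at the reference scale together with $\|\cdot\|_{H^{-1/2}}\le\|\cdot\|_{L^2}$, so the factor $h^{1/2}$ comes purely from the dilation rather than from a Poincar\'e‐type estimate. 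This is more structural, never touches the kernel explicitly, and — as you observe — actually proves the stronger statement for arbitrary $\psi\in L^2(\omega_\index(T))$, which would render the $\Pi_\index/(1-\Pi_\index)$ splitting in the proof of Proposition~\ref{prop:nearfield:V} unnecessary. The direct kernel computation you sketch at the end is essentially the paper's argument, including the logarithmic bound on $\int_{T'}|x-y|^{-(d-1)}\,dy$ and its square integrability over the tube.

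The price of your route is the step you yourself flag: you need the operator norm of $\widetilde\slp_{\widehat D}\colon H^{-1/2}(\partial\widehat D)\to H^1(\widehat U_T)$ to be bounded uniformly over the family of rescaled model domains $\widehat D$. This is plausible — all $\widehat D$ have diameter $\simeq 1$ and Lipschitz character controlled by that of $\partial\Omega$ — but it is not a consequence of the single statement \eqref{eq:mapping-properties-potentials}, which is for the fixed domain $\Omega$. Tracing this uniformity through a proof of the layer‐potential mapping property (e.g.\ via the Newton potential and the trace theorem) is doable but genuinely technical, and you have not carried it out; neither have you made the construction of $D^\ast$ precise (taking $\Omega\cap B_{4c\rho}(z)$, say, can produce cusps — one needs a bilipschitz image of a box or similar). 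You should also note that the case where $\rho$ is not small compared with $\diam(\Omega)$ needs a word (then simply $D^\ast=\Omega$). As it stands, your proof has a concrete unproved uniformity claim at its heart, so it is not yet a complete replacement for the paper's self‐contained kernel estimate, though the underlying idea is sound and in fact cleaner.
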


\begin{proof}
  We fix an element $T\in\mesh_\index$. We recall that $\Psi_\index^T$ is piecewise constant and compute for $x\in\Omega$
  \begin{align*}
    (\nabla \widetilde\slp\Psi_\index^T)(x)
= \sum_{T'\in\omega_\index(T)}\Psi_\index^T|_{T'}\int_{T'}\nabla_xG(x,y)\,dy
  \quad\text{for all } x \in \R^d\setminus \Gamma.
  \end{align*}
  The number of elements $T'$ in the patch $\omega_\index(T)$ is bounded in terms
  of the shape regularity constant $\kappa$ (cf.\ Lemma~\ref{lemma:patch}). With some constant that depends only on $\kappa$, we bound
  \begin{align}
  \label{eq:foo-30}
    \abs{(\nabla \widetilde\slp\Psi_\index^T)(x)}^2
    \lesssim
    \sum_{T'\in\omega_\index(T)}
    \abs{\Psi_\index^T|_{T'}}^2 \Big(\int_{T'}\big|\nabla_ xG(x,y)\big|\,dy\Big)^2. 
  \end{align}
  Next, we show for elements $T'\subseteq \omega_\index(T)$
  \begin{align}
  \label{eq:scaling-argument-for-nabla-G}
    \int_{U_T}\Big(\int_{T'}\big|\nabla_ xG(x,y)\big|\,dy\Big)^2\,d x \lesssim h(T)^d.
  \end{align}
 This follows from a local Lipschitz parametrization of $\partial\Omega$. We assume that (after possibly a Euclidean
 change of coordinates) that $\{(x^\prime,\Lambda(x^\prime))\colon x^\prime \in B_{2r}(0)\}$ is a part 
 of $\partial\Omega$ that contains $\omega_\index(T)$. 
 The function $\Lambda$ is Lipschitz continuous, and  
 we remark in passing that by \cite[Thm.~{3}, Sect.~{VI}]{stein70} 
 we may assume that $\Lambda:\R^{d-1} \rightarrow \R$ is Lipschitz continuous. 
 (If such a local consideration is not possible, then, since the number of 
 local charts is finite by definition of bounded Lipschitz domains, we must have 
 $\operatorname*{diam} (\omega_\index(T)) = O(1)$
 so that \eqref{eq:scaling-argument-for-nabla-G} is trivially true.) We may also assume that 
 $U_T \subset \{(x^\prime,\Lambda(x^\prime) + t)\,|\, x^\prime \in B_{2r}(0), t \in \R\}$. 
 The key observation is that the mapping $\widetilde \Lambda: \R^{d} \rightarrow \R^{d}$ given by 
 $(x^\prime,t) \mapsto (x^\prime,\Lambda(x^\prime) + t)$ is bilipschitz. 
 We conclude that $\widetilde \Lambda^{-1} U_T =:\widetilde U_T$ and 
 $\widetilde \Lambda^{-1} T^\prime =: \widetilde T^\prime \subseteq B_r(0) \times \{0\}$ satisfy 
for some $x_0$ and some $c > 0$, which depends solely on the bilipschitz mapping $\widetilde \Lambda$, 
\begin{equation*}
\widetilde U_T \subseteq B_{c h(T)} (x_0) \times [-c h(T), c h(T)], 
\qquad \widetilde T^\prime \subseteq B_{c h(T)}(x_0) \times \{0\}. 
\end{equation*}
 Finally, using $\nabla_x G(x,y) \simeq |x - y|^{-(d-1)}$, 
 the definition of the surface integral, and the change of variables
 formula for bilipschitz mappings from~\cite[Sec.~3.3.3]{eg92}, we get
\begin{align*}
  \int_{x \in U_T} \bigg( \int_{y \in T^\prime} &|\nabla_x G(x,y)|\,dy\bigg)^2\,dx 
 \simeq \int_{\widetilde x \in \widetilde U_T} 
\left(\int_{\widetilde y \in \widetilde T^\prime} |\widetilde x - \widetilde y|^{-(d-1)}\,d \widetilde y\right)^2\,d\widetilde x \\
&\lesssim \int_{\xi \in B_{c h(T)}(x_0)} \int_{t=-c h(T)}^{c h(T)} 
\left( \int_{\eta \in B_{c h(T)}(x_0)} \left( |\xi - \eta|^2 + t^2 \right)^{-(d-1)/2} \,d\eta \right)^2\,dt\,d\xi
\\
& \simeq  h(T)^{d} 
\int_{\xi \in B_{c}(x_0)} \int_{t=-c}^{c} 
\left( \int_{\eta \in B_{c}(x_0)} \left( |\xi - \eta|^2 + t^2 \right)^{-(d-1)/2} \,d\eta \right)^2\,dt\,d\xi\\
&\simeq h(T)^{d}, 
\end{align*}
where the last estimate follows by a direct estimation of the integrals, which is independent of $h(T)$. 
We have thus shown \eqref{eq:scaling-argument-for-nabla-G}. 
  Inserting \eqref{eq:scaling-argument-for-nabla-G} in \eqref{eq:foo-30} gives 
  \begin{align*}
    \int_{U_T}\abs{(\nabla \widetilde\slp\Psi_\index^T)(x)}^2\,d x
    &\lesssim \sum_{T'\in\omega_\index(T)}\abs{\Psi_\index^T|_{T'}}^2 \meshsize(T)^d
    \simeq \norm{\meshsize^{1/2}\Psi_\index^T}{L^2(\omega_\index(T))}^2.
\qedhere
  \end{align*}
\end{proof}

\begin{proposition}[Near-field bound for $\widetilde\slp$]
\label{prop:nearfield:V}
Let $w_h$ be a $\sigma$-admissible weight function. 
There exists a 
constant $\setc{nearfield}>0$ depending only on $\partial\Omega$, $\Gamma$, the $\kappa$-shape regularity of $\mesh_\index$,
and $\sigma$, 
such that the near-field part $u_{\slp,T}^\near$ satisfies 
$u_{\slp,T}^\near \in H^1(U)$ and 
$\gamma_0^\interior u_{\slp,T}^\near \in H^1(\Gamma)$ together with 
\begin{align}\label{eq1:nearfield:V}
  \sum_{T\in\mesh_\index}\norm{\wght\nabla_\Gamma \gamma_0^\interior u_{\slp,T}^\near}{L^2(T)}^2
  +
  \sum_{T\in\mesh_\index}\norm{\wght/\meshsize^{1/2}}{L^\infty(T)}^2\norm{\nabla u_{\slp,T}^\near}{L^2(U_T)}^2
  \le \c{nearfield}\,\norm{\wght\psi}{L^2(\Gamma)}^2.
\end{align}
\end{proposition}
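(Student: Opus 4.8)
The plan is to split the two sums in \eqref{eq1:nearfield:V} and to reduce each one, via the local finiteness property \eqref{UT:overlap} and the containment property \eqref{UT:patch}, to a single-element estimate to which Lemma~\ref{lemma:nearfield:V:1} applies. First, fix $T\in\mesh_\index$ and write $\psi^T := \psi\chi_{\Gamma\cap U_T}$, so that $u_{\slp,T}^\near = \widetilde\slp\psi^T$. By \eqref{UT:patch} we have $\Gamma\cap U_T\subseteq\omega_\index(T)$, hence $\supp(\psi^T)\subseteq\overline{\omega_\index(T)}$. Since $\psi^T\in L^2(\Gamma)$ is merely piecewise $L^2$, not piecewise constant, Lemma~\ref{lemma:nearfield:V:1} as stated does not apply directly; I would therefore first record the obvious extension of Lemma~\ref{lemma:nearfield:V:1} to general $L^2$-densities supported in $\overline{\omega_\index(T)}$, which follows by the same proof (the only place piecewise-constancy was used was to pull the coefficient out of the integral, but the scaling estimate \eqref{eq:scaling-argument-for-nabla-G} together with Cauchy--Schwarz in $y$ over $T'$ and \eqref{def:kappa:3d} handles the $L^2$-case as well). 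Granting this, Lemma~\ref{lemma:nearfield:V:1} (general form) gives
\begin{align*}
 \norm{\nabla u_{\slp,T}^\near}{L^2(U_T)}^2 = \norm{\nabla\widetilde\slp\psi^T}{L^2(U_T)}^2 \lesssim \norm{\meshsize^{1/2}\psi^T}{L^2(\omega_\index(T))}^2 \lesssim h(T)\,\norm{\psi}{L^2(\Gamma\cap U_T)}^2,
\end{align*}
using $\kappa$-shape regularity (Lemma~\ref{lemma:patch}\,(\ref{item:lemma:patch-i})) to replace $\meshsize$ on $\omega_\index(T)$ by $h(T)$ up to a constant. In particular $u_{\slp,T}^\near\in H^1(U_T)$, and since $\widetilde\slp\in L(H^{-1/2}(\partial\Omega);H^1(U))$ by \eqref{eq:mapping-properties-potentials} and $\psi^T\in L^2(\partial\Omega)\subset H^{-1/2}(\partial\Omega)$, in fact $u_{\slp,T}^\near\in H^1(U)$, so its interior trace lies in $H^{1/2}(\partial\Omega)$. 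To get $\gamma_0^\interior u_{\slp,T}^\near\in H^1(\Gamma)$ I would use that $\slp$ maps $L^2(\Gamma)$ to $H^1(\Gamma)$ (from \eqref{def:slp} with $s=1/2$, combined with the convention \eqref{eq:convention} and Facts~\ref{facts:sobolev}), so that $\gamma_0^\interior u_{\slp,T}^\near = \slp\psi^T\in H^1(\Gamma)$ with $\norm{\slp\psi^T}{H^1(\Gamma)}\lesssim\norm{\psi^T}{L^2(\Gamma)}$; this is a crude bound in $h$, but it suffices once the surface gradient is localized and re-examined below, or it can be improved by the same scaling argument applied to $\slp$ restricted to the patch.

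For the surface-gradient sum, the point is that $\nabla_\Gamma\gamma_0^\interior u_{\slp,T}^\near$ on the \emph{single} element $T$ is controlled by $\nabla u_{\slp,T}^\near$ on $U_T$: since $u_{\slp,T}^\near\in H^1(U_T)$ and $\overline T\subset U_T$ is at positive distance $2\delta\meshsize(T)$ from $\partial U_T$, a scaled trace inequality on the patch $\omega_\index(T)$ (or directly on $U_T$, transformed to a reference configuration of size $O(1)$ via the bilipschitz chart $\widetilde\Lambda$ from the proof of Lemma~\ref{lemma:nearfield:V:1}, then scaled by $h(T)$) yields
\begin{align*}
 \norm{\nabla_\Gamma\gamma_0^\interior u_{\slp,T}^\near}{L^2(T)}^2 \lesssim h(T)^{-1}\norm{\nabla u_{\slp,T}^\near}{L^2(U_T)}^2 + h(T)\,\norm{D^2 u_{\slp,T}^\near}{L^2(U_T)}^2,
\end{align*}
and the second-derivative term is controlled because $u_{\slp,T}^\near$ is harmonic in $U\setminus\partial\Omega$: interior regularity on the slightly smaller set where we only need $D^2u$ near $T$ — which is at distance $\gtrsim\delta\meshsize(T)$ from $\supp\psi^T\subseteq\Gamma$ away from a $\meshsize(T)$-neighbourhood, but \emph{not} away from $\partial\Omega$ near $T$ itself — needs a Caccioppoli-type estimate. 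The clean way is to note that only the gradient of the trace on $T$ is wanted: apply the scaled trace inequality $\norm{v}{H^1(\partial\Omega\cap B)}\lesssim\norm{v}{H^{3/2}(B)}$ on a ball $B$ of radius $\simeq h(T)$, bound $\norm{u_{\slp,T}^\near}{H^{3/2}(B)}$ by a scaled interior elliptic estimate on a concentric larger ball still contained in $U_T$ (harmonicity of $u_{\slp,T}^\near$ away from $\Gamma$ is not available inside $B$, but $\widetilde\slp\psi^T$ solves a transmission problem with $L^2$-data $\psi^T$ on $\Gamma$, for which the analogue $\widetilde\slp:L^2(\partial\Omega)\to H^{3/2}(U)$ holds by the mapping properties of $\slp$ and $\dlp'$), and then invoke the single-element density bound $\norm{\psi^T}{L^2(\Gamma)}^2\lesssim\norm{\psi}{L^2(\Gamma\cap U_T)}^2$ again, with the correct $h$-power tracked by the scaling. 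This gives
\begin{align*}
 \norm{\nabla_\Gamma\gamma_0^\interior u_{\slp,T}^\near}{L^2(T)}^2 \lesssim h(T)^{-1}\norm{\psi}{L^2(\Gamma\cap U_T)}^2.
\end{align*}

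Finally, I would assemble the two pieces. The $\sigma$-admissibility of $\wght$ with respect to $\mesh_\index$, together with $\Gamma\cap U_T\subseteq\omega_\index(T)$, gives $\norm{\wght}{L^\infty(\omega_\index(T))}\leq\sigma\,\wght(x)$ a.e.\ on $\omega_\index(T)$; combined with $\kappa$-shape regularity this yields $\norm{\wght/\meshsize^{1/2}}{L^\infty(T)}^2 \lesssim h(T)^{-1}\norm{\wght}{L^\infty(\Gamma\cap U_T)}^2$ and, similarly, $\norm{\wght}{L^\infty(T)}^2\,h(T)^{-1}\lesssim h(T)^{-1}\norm{\wght}{L^\infty(\Gamma\cap U_T)}^2$. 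Hence, with $w_\index^2$ bounded pointwise on $\Gamma\cap U_T$ and the estimates above,
\begin{align*}
 \norm{\wght\nabla_\Gamma\gamma_0^\interior u_{\slp,T}^\near}{L^2(T)}^2 + \norm{\wght/\meshsize^{1/2}}{L^\infty(T)}^2\norm{\nabla u_{\slp,T}^\near}{L^2(U_T)}^2 \lesssim h(T)^{-1}\norm{\wght}{L^\infty(\Gamma\cap U_T)}^2\,h(T)\,\norm{\psi}{L^2(\Gamma\cap U_T)}^2 \lesssim \norm{\wght\psi}{L^2(\Gamma\cap U_T)}^2.
\end{align*}
Summing over $T\in\mesh_\index$ and using the finite-overlap property \eqref{UT:overlap} of the covering $\{U_T\}$, which bounds $\sum_{T}\norm{\wght\psi}{L^2(\Gamma\cap U_T)}^2\leq M\norm{\wght\psi}{L^2(\Gamma)}^2$, produces \eqref{eq1:nearfield:V} with $\c{nearfield}$ depending only on $\partial\Omega$, $\Gamma$, $\kappa$, and $\sigma$. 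The main obstacle is the $\nabla_\Gamma$-of-the-trace term: a naive trace inequality costs a factor $h(T)^{-1/2}$ too much unless one genuinely exploits that $\widetilde\slp$ maps $L^2$-data two orders higher (into $H^{3/2}$ near $\Gamma$), so the careful scaling of that mapping property on the patch — or, equivalently, the $L^2(\Gamma)\to H^1(\Gamma)$ continuity of $\slp$ combined with a re-examination at the correct length scale — is the crux, and it is also why the conclusion $\gamma_0^\interior u_{\slp,T}^\near\in H^1(\Gamma)$ is even available.
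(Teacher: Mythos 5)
There is a genuine gap at the very first step. You claim that Lemma~\ref{lemma:nearfield:V:1} extends from $\PP^0(\mesh_\index)$ to general $L^2$-densities ``by the same proof,'' with Cauchy--Schwarz in $y$ replacing the step where the constant coefficient is pulled out of the integral. This does not work. The whole point of the piecewise-constant assumption is that one only needs $\int_{T'}|\nabla_x G(x,y)|\,dy$, i.e.\ the \emph{first} power of the kernel, which is uniformly bounded because $|\nabla_x G(x,y)|\simeq|x-y|^{-(d-1)}$ is integrable over the $(d-1)$-dimensional surface piece $T'$. Cauchy--Schwarz in $y$ would instead produce $\|\nabla_x G(x,\cdot)\|_{L^2(T')}$, and $\int_{T'}|x-y|^{-2(d-1)}\,dy$ diverges as $x$ approaches $\Gamma$ (already for $d=2$): the kernel is not square-integrable on the surface. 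So the lemma does not hold for arbitrary $L^2$-data, and your gradient bound $\norm{\nabla u_{\slp,T}^\near}{L^2(U_T)}^2\lesssim h(T)\norm{\psi}{L^2(\Gamma\cap U_T)}^2$ is unjustified. The paper's actual device is to decompose $\psi\chi_{\Gamma\cap U_T}=\Pi_\index(\psi\chi_{\Gamma\cap U_T})+(1-\Pi_\index)(\psi\chi_{\Gamma\cap U_T})$ with the $L^2$-orthogonal projection $\Pi_\index$ onto $\PP^0(\mesh_\index)$: the first piece really is piecewise constant and Lemma~\ref{lemma:nearfield:V:1} applies verbatim, while the second piece is handled via the continuity $\widetilde\slp\in L(\H^{-1/2}(\partial\Omega);H^1(U))$ of~\eqref{eq:mapping-properties-potentials} combined with the $\H^{-1/2}$-approximation estimate~\eqref{eq:H-1/2-estimate-L^2-projection} of $\Pi_\index$. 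That split is the key idea you are missing.

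The surface-gradient term is also overcomplicated in your proposal, and the route you sketch (scaled trace inequality, $H^{3/2}$-interior regularity) appears to leave you with the spurious factor $h(T)^{-1}$ that you yourself flag as problematic and that does not cancel in the final assembly: multiplying $\norm{\nabla_\Gamma\gamma_0^\interior u_{\slp,T}^\near}{L^2(T)}^2\lesssim h(T)^{-1}\norm{\psi}{L^2(\Gamma\cap U_T)}^2$ by $\norm{\wght}{L^\infty(T)}^2$ does not give $\norm{\wght\psi}{L^2(\Gamma\cap U_T)}^2$, whereas the display you write seems to borrow the $h(T)$-gain from the volume-gradient bound. In fact no Caccioppoli or $H^{3/2}$-argument is needed: the Verchota-type stability $\slp:L^2(\partial\Omega)\to H^1(\partial\Omega)$ of~\eqref{def:slp} gives directly $\norm{\nabla_\Gamma\gamma_0^\interior u_{\slp,T}^\near}{L^2(T)}\le\norm{\slp(\psi\chi_{U_T\cap\Gamma})}{H^1(\partial\Omega)}\lesssim\norm{\psi}{L^2(U_T\cap\Gamma)}$ with no $h$-factor at all, which is exactly what the (unweighted) first term of~\eqref{eq1:nearfield:V} requires. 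Your handling of the $\sigma$-admissibility, the patch containment $\Gamma\cap U_T\subseteq\omega_\index(T)$ and the finite-overlap property~\eqref{UT:overlap} at the end is fine, but the two single-element estimates feeding into it both need to be replaced.
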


\begin{proof}
  The stability~\eqref{def:slp} of $\slp:L^2(\partial\Omega)\rightarrow H^1(\partial\Omega)$ proved in 
  \cite{verchota} gives, for each  $T \in \mesh_\index$, 
  \begin{align*}
   \norm{\nabla_\Gamma \gamma_0^\interior u_{\slp,T}^\near}{L^2(T)}
   \leq \norm{\slp( \psi\chi_{U_T\cap\Gamma})}{H^1(\partial\Omega)}
   \lesssim \norm{\psi\chi_{U_T\cap\Gamma}}{L^2(\partial\Omega)}
   = \norm{\psi}{L^2(U_T\cap\Gamma)}.
  \end{align*}
 Summing the last estimate over all $T \in \mesh_\index$ and using~\eqref{UT:patch}--\eqref{UT:overlap}, and $\sigma$-admissibility of $\wght$, we arrive at 
  \begin{align*}
   \sum_{T\in\mesh_\index}\norm{\wght\nabla_\Gamma \gamma_0^\interior u_{\slp,T}^\near}{L^2(T)}^2
   &\lesssim \sum_{T\in\mesh_\index}\norm{\wght}{L^\infty(T)}^2\norm{\psi}{L^2(U_T\cap\Gamma)}^2
   \simeq \norm{\wght\psi}{L^2(\Gamma)}^2,
  \end{align*}%
  where all estimates depend only on the $\kappa$-shape regularity of $\mesh_\index$ and the admissibility constant $\sigma$. This bounds the first term on the left-hand side of~\eqref{eq1:nearfield:V}.
  To bound the second term, let $\Pi_\index$ denote the $L^2(\Gamma)$-orthogonal projection 
  onto $\PP^0(\mesh_\index)$. 
 We decompose the near-field as 
$u_{\slp,T}^\near
= \widetilde \slp( \Pi_\index(\psi\chi_{\Gamma\cap U_T}))
+ \widetilde \slp\big( (1- \Pi_\index)\psi\chi_{\Gamma\cap U_T}\big)$.
The condition $\supp(\psi\chi_{\Gamma\cap U_T})\subseteq\overline{\omega_\index(T)}$
implies $\supp\left(\Pi_\index(\psi\chi_{\Gamma\cap U_T})\right) \subseteq \overline{\omega_\index(T)}$ and therefore, 
  taking $\Psi_\index^T = \Pi_\index(\psi\chi_{\Gamma\cap U_T})$ in Lemma~\ref{lemma:nearfield:V:1}, we conclude
  \begin{align}
  \label{eq:foo-10}
  \begin{split}
  \sum_{T\in\mesh_\index}\norm{\wght/\meshsize^{1/2}} {L^\infty(T)}^2&\norm{\nabla\widetilde\slp(\Pi_\index(\psi\chi_{\Gamma\cap U_T}))}{L^2(U_T)}^2\\
      &\lesssim \sum_{T\in\mesh_\index}\norm{\wght/\meshsize^{1/2}}{L^\infty(T)}^2\norm{\meshsize^{1/2}\Pi_\index(\psi\chi_{\Gamma\cap U_T})}{L^2(\omega_\index(T))}^2\\
    &\lesssim \norm{\wght\psi}{L^2(\Gamma)}^2,
    \end{split}
  \end{align}
where we used the local $L^2$-stability of $\Pi_\index$ in the last estimate.
Recalling the stability 
$\widetilde\slp:\H^{-1/2}(\partial\Omega)\to H^1(U)$ of \eqref{eq:mapping-properties-potentials}, 
the equality \eqref{eq:normeqiv}, 
and the approximation property \eqref{eq:H-1/2-estimate-L^2-projection} of $\Pi_\index$, we get 
\begin{align}
\label{eq:foo-20}
\begin{split}
\sum_{T\in\mesh_\index}\norm{\wght/\meshsize^{1/2}}{L^\infty(T)}^2&\norm{\nabla \widetilde\slp\big((1-\Pi_\index) \psi\chi_{\Gamma\cap U_T}\big)}{L^2(U_T)}^2
\\&
\stackrel{\eqref{eq:mapping-properties-potentials}}{ \lesssim} 
\sum_{T\in\mesh_\index}\norm{\wght/\meshsize^{1/2}}{L^\infty(T)}^2
                       \norm{(1-\Pi_\index) \psi\chi_{\Gamma\cap U_T}}{\H^{-1/2}(\Gamma)}^2
\\ &
\stackrel{\eqref{eq:H-1/2-estimate-L^2-projection}}{\lesssim} \sum_{T\in\mesh_\index}
   \norm{\wght/\meshsize^{1/2}}{L^\infty(T)}^2\norm{\meshsize^{1/2}(\psi\chi_{\Gamma\cap U_T})}{L^2(\Gamma)}^2
\simeq \norm{\wght\psi}{L^2(\Gamma)}^2.
\end{split}
\end{align}
Combining~\eqref{eq:foo-10}--\eqref{eq:foo-20}, we bound
$\sum_{T \in \mesh_\index} \norm{\wght/\meshsize^{1/2}}{L^\infty(T)}^2\|\nabla u_{\slp,T}^\near\|^2_{L^2(U_T)}$ to conclude
the estimate in~\eqref{eq1:nearfield:V}.
\end{proof}

\subsection{Estimates for the far-field part $u_{\slp,T}^\far$}
\label{sec:farfield-slp}
The following lemma is taken from~{\cite{fkmp}}.
For the convenience of the reader and since the same argument underlies 
the proof of the analogous lemma for
the double-layer potential 
(Lemma~\ref{lemma:caccioppoli:K} below), we recall its proof here.

\begin{lemma}[Caccioppoli inequality for $u_{\slp,T}^\far$]
\label{lemma:caccioppoli:V}
With $\Omega^\e=\R^d\setminus\overline\Omega$, the 
function $u_{\slp,T}^\far$ from
\eqref{eq:uslp_far_uslp_near} satisfies $u_{\slp,T}^\far|_\Omega \in C^\infty(\Omega)$,
$u_{\slp,T}^\far|_{\Omega^\e} \in C^\infty(\Omega^\e)$,
and $u_{\slp,T}^\far|_{U_T} \in C^\infty(U_T)$. Moreover, there exists a constant $\setc{caccioppoli}>0$ depending only on $\partial\Omega$, $\Gamma$, and the 
$\kappa$-shape regularity of $\mesh_\index$ such that Hessian matrix $D^2 u_{\slp,T}^\far$ satisfies 
\begin{align}\label{eq:caccioppoli:V}
  \norm{D^2u_{\slp,T}^\far}{L^2(B_{\delta \meshsize(T)}(x))}
  \le \c{caccioppoli}\,\frac{1}{\meshsize(T)}\,\norm{\nabla u_{\slp,T}^\far}{L^2(B_{2\delta \meshsize(T)}(x))}
  \quad\text{for all }x\in T\in \mesh_\index.
\end{align}
\end{lemma}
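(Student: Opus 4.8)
The plan is to prove the Caccioppoli-type estimate \eqref{eq:caccioppoli:V} by a standard interior-regularity argument for harmonic functions, exploiting that $u_{\slp,T}^\far = \widetilde\slp(\psi\chi_{\Gamma\setminus U_T})$ has its generating density supported \emph{away} from the ball $B_{2\delta\meshsize(T)}(x)$. First I would justify the smoothness claims: since $\psi\chi_{\Gamma\setminus U_T}$ vanishes on $\Gamma\cap U_T$, the integral kernel $G(\cdot,y)$ is evaluated only at $y\in\Gamma\setminus U_T$, which is at distance $\ge 2\delta\meshsize(T)$ from any $x\in T$ (by the definition \eqref{dp:UT} of $U_T$); hence on $U_T$ the potential is a convergent integral of smooth functions and $u_{\slp,T}^\far|_{U_T}\in C^\infty(U_T)$, and likewise $u_{\slp,T}^\far$ is smooth in $\Omega$ and in $\Omega^\e$ since $G(x,\cdot)$ is smooth off the diagonal. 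In all three regions $u_{\slp,T}^\far$ is harmonic because $\Delta_x G(x,y)=0$ for $x\ne y$.

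Next comes the heart of the matter, the Caccioppoli inequality itself. Fix $x\in T$ and abbreviate $R:=\delta\meshsize(T)$, so $B_R(x)\subset B_{2R}(x)\subset U_T$ and $u:=u_{\slp,T}^\far$ is harmonic on $B_{2R}(x)$. The standard trick is to apply the ordinary Caccioppoli inequality not to $u$ but to its partial derivatives $\partial_j u$, which are again harmonic: choosing a cutoff $\eta\in C_0^\infty(B_{2R}(x))$ with $\eta\equiv1$ on $B_R(x)$, $0\le\eta\le1$, and $\abs{\nabla\eta}\lesssim 1/R$, testing $\Delta(\partial_j u)=0$ against $\eta^2\partial_j u$ and integrating by parts gives $\norm{\eta\nabla\partial_j u}{L^2}^2 \lesssim \norm{\nabla\eta\,\partial_j u}{L^2}^2 \lesssim R^{-2}\norm{\partial_j u}{L^2(B_{2R}(x))}^2$. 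Summing over $j$ yields $\norm{D^2 u}{L^2(B_R(x))} \lesssim R^{-1}\norm{\nabla u}{L^2(B_{2R}(x))}$, which is exactly \eqref{eq:caccioppoli:V} with $R=\delta\meshsize(T)$ after absorbing the fixed constant $\delta$ (depending only on $\partial\Omega$, $\Gamma$, $\kappa$) into $\c{caccioppoli}$. One must check that $B_{2\delta\meshsize(T)}(x)\subseteq U_T$ so that $u$ is indeed harmonic there; this is immediate from \eqref{dp:UT}, which puts every ball $B_{2\delta\meshsize(T)}(x)$ with $x\in T$ inside $U_T$.

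I do not expect a genuine obstacle here; the argument is the textbook Caccioppoli estimate for derivatives of harmonic functions, and all the geometric bookkeeping ($B_{2\delta h(T)}(x)\subset U_T$, the density vanishing on $U_T\cap\Gamma$, the independence of $\delta$ and the implied constants from $\TT_\index$) has already been arranged in Section~\ref{sec:notation-slp}. The only point requiring a little care is to make sure the constant in the final estimate depends only on $\partial\Omega$, $\Gamma$, and the $\kappa$-shape regularity of $\mesh_\index$: this follows because $\delta$ itself was fixed in terms of exactly these data, and the Caccioppoli constant for harmonic functions on concentric balls of radii $R$ and $2R$ is a pure dimensional constant. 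The analogous statement for the double-layer potential in Lemma~\ref{lemma:caccioppoli:K} will then follow by the same reasoning applied to $\widetilde\dlp(v\chi_{\Gamma\setminus U_T})$, which is why the proof is recorded here in detail.
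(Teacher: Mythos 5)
Your proof is correct and reaches the right conclusion, but it takes a genuinely different and somewhat more elementary route than the paper at both steps. For the smoothness and harmonicity of $u_{\slp,T}^\far$ inside $U_T$, the paper does not differentiate under the integral sign; instead, it invokes the transmission-problem description of $\widetilde\slp$ (from Sauter--Schwab) to see that the jumps $[u_{\slp,T}^\far]$ and $[\gamma_1 u_{\slp,T}^\far]$ both vanish on $\partial\Omega\cap U_T$, performs a two-fold integration by parts to establish distributional harmonicity, and finishes with Weyl's lemma. Your argument --- that each interior point of $U_T$ lies at positive distance from $\Gamma\setminus U_T$, so the potential is a parameter integral of a smooth, bounded kernel near that point --- achieves the same and is more self-contained; just phrase the distance estimate pointwise rather than uniformly over $U_T$, since points of $U_T$ near its relative boundary can be arbitrarily close to $\Gamma\setminus U_T$ (harmonicity is local, so the pointwise version is all you need). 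For the Caccioppoli inequality, the paper cites the interior-regularity estimate of Morrey (Lemma~5.7.1), which produces an $\varepsilon^{-2}\|u\|_{L^2}$ term, and then removes it by subtracting the mean $c_T$ and applying a Poincar\'e inequality. Your approach --- testing $\Delta(\partial_j u)=0$ against $\eta^2\partial_j u$ --- differentiates first and therefore avoids the zero-order term and the Poincar\'e step entirely; it also makes the dimensional nature of the constant transparent. Both routes are sound; yours trades citations for a short, from-scratch computation.
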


\begin{proof}
  The statements $u_{\slp,\el}^\far|_\Omega \in C^\infty(\Omega)$ and 
  $u_{\slp,\el}^\far|_{\Omega^\e} \in C^\infty(\Omega^\e)$ are taken from \cite[Theorem~{3.1.1}]{ss},
  and we therefore focus on 
  $u_{\slp,\el}^\far|_{U_T} \in C^\infty(U_T)$ and the estimate 
\eqref{eq:caccioppoli:V}. According to~\cite[Proposition~{3.1.7}]{ss},~\cite[Theorem~{3.1.16}]{ss}, 
and~\cite[Theorem~{3.3.1}]{ss}, the function 
$u_{\slp,T}^\far\in H^1_{\loc}(\R^d):=\set{v:\R^d\to\R}{v|_K\in H^1(K)\text{ for all }K\subset\R^d
\text{ compact}}$
solves the transmission problem
  \begin{align}\label{lem:farfield:eq:transmission}
  \begin{array}{rcll}
    -\Delta u_{\slp,T}^\far &=& 0 \quad &\text{in } \Omega \cup \Omega^\e, \\{}
    [u_{\slp,T}^\far] &=& 0 &\text{in } H^{1/2}(\partial\Omega),\\{}
    [\gamma_1 u_{\slp,T}^\far] &=& -\psi\chi_{\Gamma\setminus U_T}
    &\text{in } H^{-1/2}(\partial\Omega).
  \end{array}
  \end{align}
In particular, \eqref{lem:farfield:eq:transmission} states that the jump of 
$u_{\slp,T}^\far$ as well as the jump of the normal derivative vanish
on $\partial\Omega \cap U_T$. This implies that $u_{\slp,T}$ is harmonic in $U_T$
by the following classical argument: First, we observe that $u_{\slp,T}^\far$ 
is distributionally harmonic in $U_T$, since a two-fold  
integration by parts that uses these jump conditions shows 
for 
$v \in C^\infty_0(U_T)$ that 
$\langle u_{\slp,T}^\far,-\Delta v\rangle_\Omega = 0$.
Weyl's lemma (see, e.g., \cite[Theorem~{2.3.1}]{morrey66}) then implies that 
$u_{\slp,T}^\far$ is therefore strongly harmonic and $
u_{\slp,T}^\far \in C^\infty(U_T)$. 

The Caccioppoli inequality \eqref{eq:caccioppoli:V} now expresses 
  interior regularity for elliptic problems. Indeed, for each $u\in H^1(B_{r+\eps})$ such that $u\in H^2(B_r)$ and $\Delta u=f$ on $B_{r+\eps}$
  with balls
  ${B}_r\subseteq B_{r+\eps}$
  with radii $0<r<r+\eps$ and some $f\in L^2(B_{r+\eps})$,
  \cite[Lemma~{5.7.1}]{morrey66} shows 
  \begin{align}
\label{est:inverse_est_cf_Morrey}
   \norm{D^2 u}{L^2(B_r)}
   \lesssim \Big( \norm{f}{L^2(B_{r+\eps})}
   + \frac{1}{\eps}\,\norm{\nabla u}{L^2(B_{r+\eps})}
   + \frac{1}{\eps^2}\norm{u}{L^2(B_{r+\eps})}\Big).
  \end{align}
The hidden constant depends solely on the spatial dimension and is independent of 
  $r,\eps>0$, and $u$, $f$.
  We apply \eqref{est:inverse_est_cf_Morrey} with 
$r =\delta \meshsize(T) = \eps$,
$f = 0$, and $u = u_{\slp,T}^\far-c_T$,
where $c_T = \frac{1}{|B_{2\delta \meshsize(T)}( x)|}\int_{B_{2\delta \meshsize(T)}( x)}u_{\slp,T}^\far(y)\,dy$.
An additional Poincar\'e inequality finally leads to \eqref{eq:caccioppoli:V}. 
Note that $\delta$ and hence $\c{caccioppoli}$ depend only on 
$\partial\Omega$, $\Gamma$, and the $\kappa$-shape regularity of $\TT_\index$.
\end{proof}

The non-local character of the operator $\widetilde \slp$ is represented by the
far-field part. 
Lemma~\ref{lemma:caccioppoli:V} allows us to show a local inverse estimate 
for the far-field part of the simple-layer operator: 

\begin{lemma}[Local far-field bound for $\widetilde\slp$]\label{lemma:farfield:V}
  For all $T\in\mesh_\index$, it holds
\begin{align}
\label{eq:farfield:V}
\norm{\meshsize^{1/2}\nabla_\Gamma \gamma_0^\interior u_{\slp,T}^\far}{L^2(T)}
\le \norm{\meshsize^{1/2}\nabla u_{\slp,T}^\far}{L^2(T)}
\le \c{farfield}\,\norm{\nabla u_{\slp,T}^\far}{L^2(U_T)}. 
\end{align}
  The constant $\setc{farfield}>0$ depends only on $\Gamma$, $\partial\Omega$ and the $\kappa$-shape regularity constant of $\mesh_\index$.
\end{lemma}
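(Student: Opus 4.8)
The first inequality in~\eqref{eq:farfield:V} is immediate: on $T$ the surface gradient $\nabla_\Gamma$ of the trace $\gamma_0^\interior u_{\slp,T}^\far$ is the tangential part of the full gradient $\nabla u_{\slp,T}^\far$, since by Lemma~\ref{lemma:caccioppoli:V} the far-field part $u_{\slp,T}^\far$ is smooth in the neighbourhood $U_T$ and hence has classical one-sided limits on $T\subseteq \partial\Omega\cap U_T$; pointwise $\abs{\nabla_\Gamma \gamma_0^\interior u_{\slp,T}^\far}\le \abs{\nabla u_{\slp,T}^\far}$ on $T$, and integrating against $\meshsize$ (which is constant $=\meshsize(T)$ on $T$) gives the claim. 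So the work is entirely in the second inequality, which is a trace-type estimate combined with a local Caccioppoli bound.

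The plan for the second inequality is as follows. First I would reduce to a reference configuration: using the local bi-Lipschitz chart $\widetilde\Lambda$ from the proof of Lemma~\ref{lemma:nearfield:V:1} (or, if no such local chart exists because $\omega_\index(T)$ has diameter $O(1)$, the estimate is again trivial by $\kappa$-shape regularity and the finiteness of the number of charts), pull back $T$ and $U_T$ to sets $\widetilde T$ and $\widetilde U_T$ with $\widetilde T\subseteq B_{c\meshsize(T)}(x_0)\times\{0\}$ and $\widetilde U_T$ containing a slab $B_{c\meshsize(T)}(x_0)\times[-c\meshsize(T),c\meshsize(T)]$ for a constant $c$ depending only on $\kappa$ and the Lipschitz character of $\partial\Omega$. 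On the flattened boundary piece, a standard (scaled) trace inequality gives, with $v:=u_{\slp,T}^\far\circ\widetilde\Lambda$ restricted to the interior slab half,
\begin{align*}
\meshsize(T)\,\norm{\nabla v}{L^2(\widetilde T)}^2
\lesssim \norm{\nabla v}{L^2(\widetilde S)}^2 + \meshsize(T)^2\,\norm{D^2 v}{L^2(\widetilde S)}^2,
\end{align*}
where $\widetilde S\subseteq\widetilde U_T$ is a half-slab of thickness $\simeq\meshsize(T)$; the powers of $\meshsize(T)$ are fixed by scaling to a unit-size reference slab. Transforming back with the bi-Lipschitz change of variables (which distorts $L^2$-norms of $\nabla$ and of $D^2$ only by constants depending on $\widetilde\Lambda$, hence only on $\kappa$ and $\partial\Omega$), this yields
\begin{align*}
\meshsize(T)\,\norm{\nabla u_{\slp,T}^\far}{L^2(T)}^2
\lesssim \norm{\nabla u_{\slp,T}^\far}{L^2(U_T)}^2
 + \meshsize(T)^2\,\norm{D^2 u_{\slp,T}^\far}{L^2(U_T')}^2
\end{align*}
for a slightly shrunk neighbourhood $U_T'$ with $T\subseteq U_T'$ and $\operatorname{dist}(U_T',\partial U_T)\gtrsim\meshsize(T)$.

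The second term is then absorbed via Lemma~\ref{lemma:caccioppoli:V}: covering $T$ (or $U_T'$) by finitely many balls $B_{\delta\meshsize(T)}(x)$, $x\in T$, whose doubles $B_{2\delta\meshsize(T)}(x)$ are contained in $U_T$ and have bounded overlap (a consequence of $\kappa$-shape regularity and the definition~\eqref{dp:UT} of $U_T$), the Caccioppoli estimate~\eqref{eq:caccioppoli:V} gives $\meshsize(T)^2\,\norm{D^2 u_{\slp,T}^\far}{L^2(U_T')}^2\lesssim \c{caccioppoli}^2\,\norm{\nabla u_{\slp,T}^\far}{L^2(U_T)}^2$. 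Combining this with the trace bound and dividing out $\meshsize(T)$ produces $\norm{\meshsize^{1/2}\nabla u_{\slp,T}^\far}{L^2(T)}\le\c{farfield}\,\norm{\nabla u_{\slp,T}^\far}{L^2(U_T)}$ with $\c{farfield}$ depending only on $\Gamma$, $\partial\Omega$, and $\kappa$, as claimed.

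The main obstacle I anticipate is bookkeeping the geometry cleanly: one must make sure the half-slab $\widetilde S$ (and its preimage) stays strictly inside $\widetilde U_T$ with a margin $\simeq\meshsize(T)$ on all sides so that both the trace inequality and the subsequent Caccioppoli covering argument have the room they need, while simultaneously keeping every distortion constant dependent only on $\kappa$ and the (finitely many) bi-Lipschitz charts of $\partial\Omega$. The parameter $\delta$ in~\eqref{dp:UT} was fixed precisely to give this margin, so the chain of inclusions $B_{2\delta\meshsize(T)}(x)\subseteq U_T$ for $x\in T$ is exactly what makes both steps go through; the smoothness of $u_{\slp,T}^\far$ on $U_T$ from Lemma~\ref{lemma:caccioppoli:V} guarantees all the integrals involved are finite and the one-sided traces are classical.
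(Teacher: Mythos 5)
The first inequality and the overall shape of your argument (trace estimate plus Caccioppoli plus a covering of $T$) agree with the paper, but the concrete mechanism you propose for the second inequality has a genuine regularity gap. You flatten via the bi-Lipschitz chart $\widetilde\Lambda$, write a trace estimate on the slab for the pulled-back function $v = u_{\slp,T}^\far\circ\widetilde\Lambda$ involving $\|D^2 v\|_{L^2(\widetilde S)}$, and then claim that the change of variables ``distorts $L^2$-norms of $\nabla$ and of $D^2$ only by constants depending on $\widetilde\Lambda$''. That last claim is false for a bi-Lipschitz (or even piecewise $C^1$) chart: $D^2 v = (D^2 u\circ\widetilde\Lambda)[D\widetilde\Lambda,D\widetilde\Lambda] + (\nabla u\circ\widetilde\Lambda)\cdot D^2\widetilde\Lambda$, and the second term requires $D^2\widetilde\Lambda\in L^\infty$, i.e.\ a $C^{1,1}$ chart. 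The hypotheses of the paper (Lipschitz $\partial\Omega$, piecewise $C^1$) do not supply that, and the Caccioppoli inequality \eqref{eq:caccioppoli:V} controls $D^2 u_{\slp,T}^\far$, not $D^2 v$; there is no way to pass from one to the other with the given chart regularity. The paper avoids the problem entirely: its Step~1 proves a \emph{multiplicative} trace inequality directly on $\partial\Omega$ using a cut-off $\widetilde\chi_B$ (only the Lipschitz character of $\partial\Omega$ is used, no flattening, no second derivatives of the chart), giving $\|w\|^2_{L^2(B\cap\partial\Omega)} \lesssim r^{-1}\|w\|^2_{L^2(B')} + \|w\|_{L^2(B')}\|\nabla w\|_{L^2(B')}$, which is then applied with $w = $ components of $\nabla u_{\slp,T}^\far$ so that only $D^2$ of the \emph{original} function ever appears; this is then absorbed by the Caccioppoli inequality. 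The covering in the paper is a Besicovitch covering of $T$ by balls $B_{\delta h(T)/2}(x)$ centred at points of $T$ (with doubled balls of bounded overlap inside $U_T$), rather than a single flattened slab, but that part of your plan is fine.

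To repair your argument you should keep the structure but drop the pull-back for the trace step: apply a multiplicative trace inequality directly on $\partial\Omega$ for each ball in the cover, applied to $\nabla u_{\slp,T}^\far$ itself, so that the only second-order quantity appearing is $D^2 u_{\slp,T}^\far$, which the Caccioppoli lemma controls. The chart $\widetilde\Lambda$ is then used only to verify the (purely first-order) trace inequality and the geometric scalings, for which Lipschitz regularity suffices.
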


\begin{proof}
By Lemma~\ref{lemma:caccioppoli:V} we have $u_{\slp,T}^\far \in C^\infty(U_T)$. 
The first estimate in \eqref{eq:farfield:V} follows from the fact
that, for smooth functions, the surface gradient $\nabla_\Gamma(\cdot)$ is the orthogonal projection 
of the gradient $\nabla(\cdot)$ onto the tangent plane, i.e.,
$\nabla_\Gamma \gamma_0^\interior u(x) = \nabla u(x) - \big(\nabla u(x)\cdot\normal(x)\big)\,\normal(x)$, see~\cite{verchota}.

The second estimate in~\eqref{eq:farfield:V} is proved with a trace inequality and the 
Caccioppoli inequality~\eqref{eq:caccioppoli:V} in the following way. 
We fix an element $T \in \mesh_\index$.
\newline
{\em 1.~step:} We provide a trace inequality. Let $B = B_r(x)$ be a ball with 
center $x \in T \subseteq \partial\Omega$ and radius $r > 0$. 
Let $B^\prime = B_{3r/2}(x)$ and $B^{\prime\prime}:= B_{5 r/4}(x)$. 
We define a smooth cut-off function $\widetilde \chi_B \in C^\infty_0(\R^d)$ with 
$\operatorname*{supp} \widetilde \chi_B \subseteq B^\prime$ and $\widetilde \chi_B \equiv 1$ on $B$ by  
$$
\widetilde \chi_B:= \chi_{B^{\prime\prime}} \star \rho_{r/4}, 
$$
where $\rho_\varepsilon(x)$ is a standard mollifier of the 
form $\rho_\varepsilon(x) = \varepsilon^{-d} \rho_1(x/\eps)$ for a fixed $\rho_1 \in C^\infty_0(\R^d)$
with $\rho_1 \ge 0$, $\operatorname*{supp} \rho_1 \subseteq B_1(0)$ and $\int_{\R^d} \rho_1(x)\,dx = 1$. 
We note that for a $C > 0$ depending solely on the choice of $\rho_1$, we have 
\begin{equation*}
\|\nabla \widetilde \chi_B \|_{L^\infty(\R^d)} \leq C r^{-1}. 
\end{equation*}
With this cut-off function in hand, we estimate for sufficiently regular functions $v$ and the standard
multiplicative trace inequality for $\partial\Omega$
\begin{align}
\nonumber 
\|v\|^2_{L^2(B \cap \partial\Omega)} & \leq \|\widetilde \chi_B v\|^2_{L^2(\partial\Omega)} \lesssim 
\|\widetilde \chi_B v\|^2_{L^2(\Omega)} +  
\|\widetilde \chi_B v\|_{L^2(\Omega)}   
\|\nabla (\widetilde \chi_B v)\|_{L^2(\Omega)}  \\
\label{eq:foo-100}
&\lesssim r^{-1} \|v\|^2_{L^2(B^\prime)} + 
\|v\|_{L^2(B^\prime)} \|\nabla v\|_{L^2(B^\prime)}. 
\end{align}
{\em 2.~step:}
The set $\mathcal{F} := \left\{ \overline{B_{\delta \meshsize(T)/2}(x)} \mid x \in T \right\}$ 
is a closed cover of $T$ with
$\sup_{B \in \mathcal{F}}\diam(B) < \infty$, and $T$ is the set of their midpoints. According to 
Besicovitch's covering theorem, cf.~\cite[Sect.~{1.5.2}]{eg92},
there is a constant $N_d$, which depends only on the spatial dimension $d$, as well as countable subsets 
$\mathcal{G}_j\subseteq\mathcal{F}$, $j=1, \dots, N_d$, the elements of every $\mathcal{G}_j$ being pairwise disjoint, 
such that $T \subseteq \bigcup_{j=1}^{N_d}\bigcup_{B \in \mathcal{G}_j} B$.
Let $\widehat{\mathcal{G}}_j$ be the set of balls obtained by doubling the radius of the balls 
of $\mathcal{G}_j$, i.e., $\widehat{\mathcal{G}}_j := \left\{ B_{\delta \meshsize(T)}(x) 
\mid B_{\delta \meshsize(T)/2}(x) \in \mathcal{G}_j \right\}$. 
As the elements of $\mathcal{G}_j$ are pairwise disjoint and all balls have the same radius $\delta h(T)/2$, 
there is a constant $\widehat N_d$, also depending only on the spatial dimension $d$, 
such that at most $\widehat N_d$ elements of $\widehat{\mathcal{G}}_j$ overlap.
If we write $B := B_{\delta \meshsize(T)/2}(x)$, $B^\prime:= B_{3/4 \delta  \meshsize(T)}(x)$, 
and $\widehat B := B_{\delta \meshsize(T)}(x)$, the multiplicative trace inequality~\eqref{eq:foo-100} 
and the Caccioppoli inequality~\eqref{eq:caccioppoli:V} show 
\begin{align*}
\norm{\nabla u_{\slp,T}^\far}{L^2(B\cap T)}^2
\overset{\eqref{eq:foo-100}}{\lesssim} \frac{1}{\meshsize(T)} \norm{\nabla u_{\slp,\el}^\far}{L^2(B^\prime)}^2 +
\norm{\nabla u_{\slp,\el}^\far}{L^2(B^\prime)}\norm{D^2u_{\slp,\el}^\far}{L^2(B^\prime)}
\overset{\eqref{eq:caccioppoli:V}}{\lesssim} \frac{1}{\meshsize(T)} \norm{\nabla u_{\slp,\el}^\far}{L^2(\wat B)}^2 .
\end{align*}
{\em 3.~step:} We use the last estimate to get
\begin{align*}
\norm{\nabla u_{\slp,\el}^\far}{L^2(T)}^2
\le \sum_{j=1}^{N_d} \sum_{B \in \mathcal{G}_j}\norm{\nabla u_{\slp,\el}^\far}{L^2(B\cap T)}^2
\lesssim \frac{1}{\meshsize(T)}\,\sum_{j=1}^{N_d}\sum_{\wat B \in \widehat{\mathcal{G}}_j}\norm{\nabla u_{\slp,\el}^\far}{L^2(\wat B)}^2
\lesssim \frac{N_d \widehat{N}_d}{\meshsize(T)}\, \norm{\nabla u_{\slp,\el}^\far}{L^2(U_T)}^2.
\end{align*}
This concludes the proof of~\eqref{eq:farfield:V}.
\end{proof}

Summation of the elementwise estimates of Lemma~\ref{lemma:farfield:V} yields 
the following result:

\begin{proposition}[Far-field bound for $\widetilde\slp$]\label{prop:farfield:V}
There is a constant $\c{farfield}>0$ depending only on $\partial\Omega$, $\Gamma$, the $\kappa$-shape regularity 
of $\mesh_\index$, and the $\sigma$-admissibility of the weight function
$w_\index$ such that 
\begin{align*}
  \sum_{T\in\mesh_\index}\norm{\wght\nabla_\Gamma \gamma_0^\interior u_{\slp,T}^\far}{L^2(T)}^2
  &\le \sum_{T\in\mesh_\index}\norm{\wght\nabla u_{\slp,T}^\far}{L^2(T)}^2\\
  &\leq \c{farfield} \left(\norm{\wght/\meshsize^{1/2}}{L^\infty(\Gamma)}^2\norm{\psi}{\H^{-1/2}(\Gamma)}^2 + \norm{\wght\psi}{L^2(\Gamma)}^2\right).
\end{align*}
\end{proposition}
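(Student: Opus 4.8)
The plan is to obtain the two-sided estimate by combining four ingredients: the pointwise domination of the surface gradient by the full gradient, the elementwise far-field bound \eqref{eq:farfield:V} of Lemma~\ref{lemma:farfield:V}, the near-field bound \eqref{eq1:nearfield:V} of Proposition~\ref{prop:nearfield:V}, and the mapping property \eqref{eq:mapping-properties-potentials} of $\widetilde\slp$; the glue is the splitting \eqref{eq:decompnearfar} and the uniform finite-overlap property \eqref{UT:overlap} of the neighborhoods $U_T$. The first inequality in the statement is immediate: by Lemma~\ref{lemma:caccioppoli:V} the function $u_{\slp,T}^\far$ is smooth on the open set $U_T\supseteq T$, so $\gamma_0^\interior u_{\slp,T}^\far$ is a genuine restriction and, exactly as in the proof of Lemma~\ref{lemma:farfield:V}, $|\nabla_\Gamma\gamma_0^\interior u_{\slp,T}^\far(x)|\le|\nabla u_{\slp,T}^\far(x)|$ for every $x\in T$; multiplying by $\wght(x)^2$, integrating over $T$, and summing over $T\in\mesh_\index$ gives it.

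For the second inequality I would first argue on a single element $T\in\mesh_\index$. Since $\meshsize|_T=h(T)$, we have $\norm{\wght}{L^\infty(T)}^2=h(T)\,\norm{\wght/\meshsize^{1/2}}{L^\infty(T)}^2$, so that
\[
 \norm{\wght\nabla u_{\slp,T}^\far}{L^2(T)}^2
 \le \norm{\wght/\meshsize^{1/2}}{L^\infty(T)}^2\,\norm{\meshsize^{1/2}\nabla u_{\slp,T}^\far}{L^2(T)}^2
 \le \c{farfield}^2\,\norm{\wght/\meshsize^{1/2}}{L^\infty(T)}^2\,\norm{\nabla u_{\slp,T}^\far}{L^2(U_T)}^2
\]
by \eqref{eq:farfield:V}. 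Then, inserting $u_{\slp,T}^\far=u_\slp-u_{\slp,T}^\near$ from \eqref{eq:decompnearfar} and using the triangle inequality yields
\[
 \norm{\wght\nabla u_{\slp,T}^\far}{L^2(T)}^2
 \lesssim \norm{\wght/\meshsize^{1/2}}{L^\infty(T)}^2\Bigl(\norm{\nabla u_\slp}{L^2(U_T)}^2+\norm{\nabla u_{\slp,T}^\near}{L^2(U_T)}^2\Bigr).
\]

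Summing over $T\in\mesh_\index$, the near-field contribution $\sum_{T}\norm{\wght/\meshsize^{1/2}}{L^\infty(T)}^2\norm{\nabla u_{\slp,T}^\near}{L^2(U_T)}^2$ is bounded by the second sum on the left-hand side of \eqref{eq1:nearfield:V}, hence by $\c{nearfield}\,\norm{\wght\psi}{L^2(\Gamma)}^2$ via Proposition~\ref{prop:nearfield:V}. For the whole-potential contribution I would bound $\norm{\wght/\meshsize^{1/2}}{L^\infty(T)}\le\norm{\wght/\meshsize^{1/2}}{L^\infty(\Gamma)}$, pull this factor out of the sum, use \eqref{UT:overlap} together with $U_T\subset U$ to obtain $\sum_{T}\norm{\nabla u_\slp}{L^2(U_T)}^2\le M\,\norm{\nabla u_\slp}{L^2(U)}^2$, and finish with $\norm{\nabla u_\slp}{L^2(U)}\le\norm{\widetilde\slp(E_{0,\Gamma}\psi)}{H^1(U)}\lesssim\norm{\psi}{\H^{-1/2}(\Gamma)}$ from \eqref{eq:mapping-properties-potentials} and \eqref{eq:normeqiv}. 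Adding the two contributions gives the claimed bound, with $\c{farfield}$ depending only on $\partial\Omega$, $\Gamma$, $\kappa$, and, through Proposition~\ref{prop:nearfield:V}, on $\sigma$.

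I do not expect a genuine obstacle here; the two points that require care are (i) carrying out the near/far splitting \eqref{eq:decompnearfar} at the level of $\norm{\nabla u_{\slp,T}^\far}{L^2(U_T)}^2$ \emph{before} invoking the overlap bound, since $u_{\slp,T}^\far$ depends on $T$ while $u_\slp$ does not, and (ii) keeping the weight localized as $\norm{\wght/\meshsize^{1/2}}{L^\infty(T)}$ for the near-field term, so that Proposition~\ref{prop:nearfield:V} applies verbatim, and globalizing it to $\norm{\wght/\meshsize^{1/2}}{L^\infty(\Gamma)}$ only for the whole-potential term, where it is allowed to appear in the final estimate.
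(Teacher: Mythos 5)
Your proposal is correct and matches the paper's proof essentially step for step: the elementwise bound from Lemma~\ref{lemma:farfield:V}, the splitting $u_{\slp,T}^\far = \widetilde\slp\psi - u_{\slp,T}^\near$ from~\eqref{eq:decompnearfar}, the finite-overlap bound~\eqref{UT:overlap} combined with the mapping property~\eqref{eq:mapping-properties-potentials} and~\eqref{eq:normeqiv} for the whole-potential term, and Proposition~\ref{prop:nearfield:V} for the near-field term. The only cosmetic difference is that you spell out explicitly the elementary weight manipulation $\norm{\wght\nabla u}{L^2(T)} \le \norm{\wght/\meshsize^{1/2}}{L^\infty(T)}\norm{\meshsize^{1/2}\nabla u}{L^2(T)}$, which the paper leaves implicit.
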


\begin{proof}
  We use the local far-field bound~\eqref{eq:farfield:V}
  of Lemma~\ref{lemma:farfield:V} and $u_{\slp,T}^\far = \widetilde\slp\psi - u_{\slp,T}^\near$ to see
\begin{align}\label{eq2:invest}
&
\sum_{T\in\mesh_\index}\norm{\wght\nabla_\Gamma \gamma_0^\interior u_{\slp,T}^\far}{L^2(T)}^2
\leq \sum_{T\in\mesh_\index}\norm{\wght\nabla u_{\slp,T}^\far}{L^2(T)}^2
\overset{\eqref{eq:farfield:V}}{\lesssim} \sum_{T\in\mesh_\index}\norm{\wght/\meshsize^{1/2}}{L^\infty(T)}^2 \norm{\nabla u_{\slp,T}^\far}{L^2(U_T)}^2
\nonumber\\ & \quad
\overset{\eqref{eq:decompnearfar}}{\lesssim} \sum_{T\in\mesh_\index}\norm{\wght/\meshsize^{1/2}}{L^\infty(T)}^2 \norm{\nabla \widetilde\slp\psi}{L^2(U_T)}^2
+\sum_{T\in\mesh_\index}\norm{\wght/\meshsize^{1/2}}{L^\infty(T)}^2 \norm{\nabla u_{\slp,T}^\near}{L^2(U_T)}^2.
\end{align}
The first term on the right-hand side in~\eqref{eq2:invest} is estimated by stability of $\widetilde\slp$, the finite overlap property~\eqref{UT:overlap}, and~\eqref{eq:normeqiv}
  \begin{align*}
    \sum_{T\in\mesh_\index}\norm{\wght/\meshsize^{1/2}}{L^\infty(T)}^2\norm{\nabla \widetilde\slp\psi}{L^2(U_T)}^2
    \overset{\eqref{UT:overlap}}{\lesssim} \norm{\wght/\meshsize^{1/2}}{L^\infty(\Gamma)}^2\norm{\nabla \widetilde\slp\psi}{L^2(U)}^2
    \lesssim \norm{\wght/\meshsize^{1/2}}{L^\infty(\Gamma)}^2\norm{\psi}{\H^{-1/2}(\Gamma)}^2.
  \end{align*}
  The second term in~\eqref{eq2:invest} is bounded with the near-field bound~\eqref{eq1:nearfield:V}.
\end{proof}


\section{Far-field and near-field estimates for the double-layer potential}
\label{sec:aux-dlp}


\noindent
Section~\ref{section:invest:aux} studied far-field and near-field estimates
for the simple-layer potential. 
Corresponding results for the double-layer potential are derived in the present 
section. Throughout this section, let 
\begin{equation*}
v\in \H^1(\Gamma) \subset H^1(\partial\Omega). 
\end{equation*}
In particular, $v\in\H^{1/2}(\Gamma)$ with
$\norm{v}{\H^{1/2}(\Gamma)} = \norm{v}{H^{1/2}(\partial\Omega)}$. Since 
$H^{1/2}$ does not allow jumps, the splitting into near-field and far-field contribution of the double-layer potential $u_\dlp := \widetilde\dlp v$ cannot be done by characteristic functions, but requires smoother cut-off 
functions and greater technical care.

\subsection{Decomposition into near-field and far-field}
\label{sec:notation-dlp}
We use the notation introduced in Section~\ref{sec:notation-slp} concerning the neighborhoods $U_T$. 
In order to define the near-field and far-field parts for 
the double-layer potential, we need appropriate cut-off functions: 
For each $T \in \TT_\index$, we define $\eta_T \in C^\infty_0(\R^d)$ with 
the aid of the standard mollifier $\rho_\varepsilon$ that was already used in the 
proof of Lemma~\ref{lemma:farfield:V}: 
\begin{equation}
\label{eq:definition-eta_T}
\eta_T:= \chi_{\widetilde U_T} \star \rho_{\delta/4 h(T)}, 
\qquad \widetilde U_T:= \cup_{x \in T} B_{\delta/2 h(T)},
\qquad U^\prime_T:= \cup_{x \in T} B_{\delta/4 h(T)}.  
\end{equation}
This function satisfies: 
\begin{equation}
\label{eta:properties}
\operatorname*{supp} \eta_T \subseteq U_T, 
\quad 
\eta_T|_{U^\prime_T} \equiv 1, 
\quad \|\eta_T\|_{L^\infty(\R^d)} \leq 1, 
\quad \|\nabla \eta_T\|_{L^\infty(\R^d)} \lesssim \frac{1}{h(T)}, 
\end{equation}
where the implied constant depends on the  $\kappa$-shape regularity of the triangulation through
the parameter $\delta$. 
We note that the assumptions on $U_T$ imply
$(\operatorname*{supp} \eta_T)\cap \Gamma  \subseteq \overline{\omega_\index(T)}$.

The following lemma may be viewed as an extension of \cite[Thm.~{7.1}]{ds80} to the  case of curved elements.

\begin{lemma}[Poincar\'e-Friedrichs inequality on patches]
\label{lemma:poincare}
  Let $v\in\H^1(\Gamma)$. For each $T\in\mesh_\index$, there is a constant $v_T\in\mathbb{R}$ 
  such that $(v-v_T)\eta_T\in\H^1(\Gamma)$, $(v-v_T)(1-\eta_T)\in H^1(\partial\Omega)$, 
  and
  \begin{align}
  \label{est:poincare on patch - first est}
  \norm{v-v_T}{L^2(\omega_\index(T))}
  &
  \le \c{poincare} \norm{\meshsize\nabla_\Gamma v}{L^2(\omega_\index(T))}, 
  \\
  \label{est:poincare on patch - second est}
  \norm{(v-v_T)\eta_\el}{H^{1/2}(\partial\Omega)}
  &
  \le \c{poincare} \norm{\meshsize^{1/2}\nabla_\Gamma v}{L^2(\omega_\index(\el))},
  \\
  \label{est:poincare on patch - third est}
  \norm{(v-v_T)\eta_\el}{H^{1}(\partial\Omega)}
  &\le \c{poincare} \norm{\nabla_\Gamma v}{L^2(\omega_\index(\el))}.
  \end{align}
  The constant $v_T$ satisfies $v_T = 0$ if 
    $\partial\omega_\index(T)\cap\partial\Gamma$ contains a facet of the triangulation.
  The constant $\setc{poincare}>0$ depends only on 
  $\partial\Omega$ and the $\kappa$-shape regularity constant of $\mesh_\index$. 
\end{lemma}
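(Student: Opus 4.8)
The plan is to reduce the three inequalities to classical Poincaré--Friedrichs and trace/lifting estimates on a \emph{single, fixed} reference configuration, transferring the mesh-size powers by scaling. First I would set up the geometry: since the element patch $\omega_\index(T)$ has a bounded number of elements, each of size $\simeq h(T)$ by $\kappa$-shape regularity (Lemma~\ref{lemma:patch}), and since $\Gamma$ stems from a Lipschitz dissection, there is, after a Euclidean change of coordinates and use of a local bi-Lipschitz chart $\widetilde\Lambda$, a reference patch $\widehat\omega := \widetilde\Lambda^{-1}(\omega_\index(T))$ contained in a ball of radius $\simeq 1$ in $\R^{d-1}$, whose shape belongs to a compact family of admissible configurations determined only by $\kappa$ and the Lipschitz character of $\partial\Omega$. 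Pulling $v$ back to $\widehat v$ on $\widehat\omega$, the surface gradient transforms with factors controlled by $\kappa$, and Lebesgue measure with a factor $\simeq h(T)^{d-1}$.

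Next I would \emph{define} $v_T$: on the reference patch take $\widehat v_T$ to be the average of $\widehat v$ over $\widehat\omega$, unless a facet of $\partial\omega_\index(T)$ lies on $\partial\Gamma$, in which case set $v_T := 0$. In the first (averaging) case, the standard Poincaré inequality on the fixed Lipschitz domain $\widehat\omega$ (with constant depending only on the admissible-configuration family) gives $\|\widehat v - \widehat v_T\|_{L^2(\widehat\omega)} \lesssim \|\nabla\widehat v\|_{L^2(\widehat\omega)}$; in the second case, since $v$ (equivalently $\widehat v$) vanishes on a facet-sized portion of $\partial\widehat\omega$, the Friedrichs inequality yields the same bound with $v_T=0$. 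Scaling back — $L^2$-norms on $\omega_\index(T)$ pick up $h(T)^{(d-1)/2}$, $L^2$-norms of $\nabla_\Gamma$ pick up $h(T)^{(d-1)/2-1}$, so the ratio contributes one power of $h(T)$ — produces \eqref{est:poincare on patch - first est}. The claim $(v-v_T)\eta_T\in\H^1(\Gamma)$ follows because $\eta_T$ is smooth with $(\operatorname*{supp}\eta_T)\cap\Gamma\subseteq\overline{\omega_\index(T)}$, so the product extends by zero off $\Gamma$ into $H^1(\partial\Omega)$; and $(v-v_T)(1-\eta_T)\in H^1(\partial\Omega)$ since on $\operatorname*{supp}(1-\eta_T)$ one has $v-v_T = v - v_T$ with $v\in H^1(\partial\Omega)$ and $v_T$ constant.

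For \eqref{est:poincare on patch - third est} I would use the product rule: $\nabla_\Gamma\big((v-v_T)\eta_T\big) = \eta_T\nabla_\Gamma v + (v-v_T)\nabla_\Gamma\eta_T$, so by \eqref{eta:properties} ($\|\eta_T\|_\infty\le1$, $\|\nabla\eta_T\|_\infty\lesssim h(T)^{-1}$) and \eqref{est:poincare on patch - first est},
\begin{align*}
\norm{\nabla_\Gamma\big((v-v_T)\eta_T\big)}{L^2(\partial\Omega)}
\lesssim \norm{\nabla_\Gamma v}{L^2(\omega_\index(T))}
+ \meshsize(T)^{-1}\norm{v-v_T}{L^2(\omega_\index(T))}
\lesssim \norm{\nabla_\Gamma v}{L^2(\omega_\index(T))},
\end{align*}
and the $L^2$-part is bounded the same way (using $h(T)\lesssim\diam(\partial\Omega)$), giving the $H^1(\partial\Omega)\simeq L^2+\|\nabla_\Gamma\cdot\|_{L^2}$ bound via Facts~\ref{facts:sobolev}\eqref{item:facts:sobolev-i}. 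Finally, \eqref{est:poincare on patch - second est} is obtained by interpolation: $(v-v_T)\eta_T$ is supported in $\overline{\omega_\index(T)}$, so $\|(v-v_T)\eta_T\|_{L^2(\partial\Omega)}\lesssim\meshsize(T)\|\nabla_\Gamma v\|_{L^2(\omega_\index(T))}$ from \eqref{est:poincare on patch - first est} and the bound just shown for the $L^2$-part, while \eqref{est:poincare on patch - third est} controls the $H^1$-norm; the $K$-method characterization of $H^{1/2}(\partial\Omega)$ (Facts~\ref{facts:sobolev}\eqref{item:facts:sobolev-iii}) then gives $\|(v-v_T)\eta_T\|_{H^{1/2}(\partial\Omega)}\lesssim \|(v-v_T)\eta_T\|_{L^2}^{1/2}\|(v-v_T)\eta_T\|_{H^1}^{1/2}\lesssim \meshsize(T)^{1/2}\|\nabla_\Gamma v\|_{L^2(\omega_\index(T))}$.

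I expect the main obstacle to be the \emph{uniformity} of the Poincaré/Friedrichs constant over all patches: one must argue that, although patches vary with the (arbitrarily fine) mesh, their pullbacks under the finitely many local charts form a precompact family of Lipschitz domains whose geometry — number of elements, shape of each element via the Gramian bound in Definition~\ref{def:mesh}\eqref{item:def:mesh-v}, and the way they are glued (Definition~\ref{def:mesh}\eqref{item:def:mesh-iii}--\eqref{item:def:mesh-iv}) — is controlled solely by $\kappa$ and the Lipschitz character of $\partial\Omega$, so a single constant works. The Friedrichs case (when $v_T=0$) additionally needs that the portion of $\partial\widehat\omega$ on which $\widehat v$ vanishes has $(d-2)$-measure bounded below uniformly, which again follows from shape regularity since it contains (the image of) a full facet.
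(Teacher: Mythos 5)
Your proposal is correct in outline, but it takes a genuinely different route from the paper's proof, and the difference is instructive. You define $v_T$ as the \emph{patch} average (or zero in the Friedrichs case), pull the entire patch $\omega_\index(T)$ back to a reference configuration, and invoke a uniform Poincar\'e/Friedrichs constant via a compactness argument over the family of admissible patch shapes. The paper instead defines $v_T = \ell_{f_T}(v)$ as a \emph{facet} average, applies the Deny--Lions lemma on the single fixed reference element $\Tref$ to obtain two element-local estimates --- $\|v - \ell_f(v)\|_{L^2(T)} \lesssim h(T)\|\nabla_\Gamma v\|_{L^2(T)}$ and $|\ell_{f_1}(v)-\ell_{f_2}(v)| \lesssim h(T)^{1-(d-1)/2}\|\nabla_\Gamma v\|_{L^2(T)}$ --- and then chains these through the patch using the facet-connectivity property of Lemma~\ref{lemma:patch}~(\ref{item:lemma:patch-iii}). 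The paper's route is more elementary precisely because it sidesteps what you correctly identify as the ``main obstacle'': you would need to show that pullbacks of patches form a precompact family in a topology under which the Poincar\'e/Friedrichs constant varies semicontinuously, and for patches assembled from curved $C^1$ elements glued along $(d-2)$-faces this is a nontrivial claim to make precise. By working element-by-element, the paper only ever uses a constant coming from a single fixed reference element (controlled by the Gramian bound of Definition~\ref{def:mesh}~(\ref{item:def:mesh-v})), and the boundedness of the number of elements per patch (Lemma~\ref{lemma:patch}~(\ref{item:lemma:patch-ii})) makes the chaining harmless. A further practical advantage of the facet-average choice is that the case distinction is uniform: whether $f_T$ lies on $\partial\Gamma$ (forcing $v_T=0$ since $v\in\H^1(\Gamma)$) or not, the same element-local estimates apply, whereas your approach requires a separate Friedrichs inequality with vanishing on a $(d-2)$-dimensional portion of the patch boundary, whose uniformity again needs the precompactness machinery. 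Your treatment of the third estimate (product rule with \eqref{eta:properties}) and the second estimate (interpolation $\|u\|_{H^{1/2}}^2 \lesssim \|u\|_{L^2}\|u\|_{H^1}$) matches the paper's exactly.
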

\begin{proof}

It is clear that $(v -v_T) (1 - \eta_T) \in H^1(\partial\Omega)$, since $v \in \H^1(\Gamma)$ and $\eta_T$ is smooth. 
The remaining statements require more care. 
\newline 
{\em 1.~step:} For $v \in \H^1(\Gamma)$ and a facet $f\in\FF_\index$ of the triangulation $\TT_\index$ 
(recall that facets are images of $(d-2)$-faces of $\Tref$ under the element map) denote by 
$\ell_{f}(v)$ the average of $v$ on $f$. As $\ell_f(1) = 1$, we can use the Deny-Lions lemma on
the reference element, and the assumptions on the element maps then imply
\begin{align}
\label{eq:poincare-100}
\|v - \ell_f(v)\|_{L^2(T)} &\lesssim h(T) \|\nabla_\Gamma v\|_{L^2(T)} \qquad \mbox{ if $f$ is a facet of $T$,} \\
\label{eq:poincare-200}
|\ell_{f_1}(v) - \ell_{f_2}(v)| &\lesssim h(T)^{1-(d-1)/2} \|\nabla_\Gamma v\|_{L^2(T)} 
\qquad \mbox{ if $f_1$, $f_2$ are two facets of $T$}. 
\end{align}
{\em 2.~step:} Fix an element $T\in\mesh_\index$. 
\begin{itemize}
\item 
If $\eta_T|_{\partial\Gamma}  \equiv 0$, then select an 
arbitrary facet $f_T$ of the element patch $\omega_\index(T)$. 
\item 
If $\eta_T|_{\partial\Gamma} \not\equiv 0$, 
then we claim that there exists a facet $f$ of $\omega_\index(T)$ with $f \subseteq \partial\Gamma$. 
To see this, let $x_0 \in \partial\Gamma$ with $\eta_T(x_0)\ne 0$. By continuity of $\eta_T$ and since
$\partial\Gamma$ is covered by facets of the triangulation, we may assume that $x_0$ is 
in the interior of a boundary facet $f_T$. This facet belongs to a unique element $T_f$ of the triangulation; 
by continuity of $\eta_T$, we may assume $\operatorname*{supp} \eta_T \cap T_f \neq\emptyset$. Since 
$(\operatorname*{supp} \eta_T) \cap \overline{\Gamma} \subseteq \overline{\omega_h(T)}$, we conclude
$T_f \subseteq \omega_\index(T)$ and thus the boundary facet $f_T$ is a facet of $\omega_h(T)$. 
\end{itemize}
Set $v_T:= \ell_{f_T}(v)$. An immediate consequence of $v \in \H^1(\Gamma)$ is that 
$v_T = 0$ if $\eta_T$ does not vanish on $\partial\Gamma$. Since $\eta_T$ is smooth, we conclude 
$(v - v_T)\eta_T \in \H^1(\Gamma)$. In fact, viewed as a function on $\partial\Omega$, we have 
\begin{equation}
\label{eq:poincare-300} 
\operatorname*{supp} ((v - v_T) \eta_T) \subseteq \overline{\omega_\index(T)}. 
\end{equation}
\newline 
{\em 3.~step:}
The bounds \eqref{eq:poincare-100}, \eqref{eq:poincare-200} in conjunction
with Lemma~\ref{lemma:patch} imply 
\begin{align}
\label{eq:poincare-400}
\|v - v_T\|_{L^2(\omega_\index(T))} &\lesssim \|h \nabla_\Gamma v\|_{L^2(\omega_\index(T))}, \\
\label{eq:poincare-500}
\|\nabla_\Gamma(v - v_T)\|_{L^2(\omega_\index(T))} &= \|\nabla_\Gamma v\|_{L^2(\omega_\index(T))},
\end{align}
where~\eqref{eq:poincare-400} is already the claim~\eqref{est:poincare on patch - first est}. 
The product rule, \eqref{eta:properties}, \eqref{eq:poincare-300}, the estimate~\eqref{eq:poincare-400},
the trivial bound $\meshsize(T) \lesssim |T|^{1/(d-1)}\le |\Gamma|^{1/(d-1)} \lesssim 1$
yield
  \begin{align*}
   \norm{\nabla_\Gamma\big((v-v_T)\eta_T\big)}{L^2(\partial\Omega)}
   \le \norm{(v-v_T)\nabla_\Gamma\eta_T}{L^2(\omega_\index(T) )}
   + \norm{\eta_T\nabla_\Gamma(v-v_T)}{L^2(\omega_\index(T))}
   \lesssim\norm{\nabla_\Gamma v}{L^2(\omega_\index(\el))},
  \end{align*}
  which proves~\eqref{est:poincare on patch - third est}.
  It remains to verify~\eqref{est:poincare on patch - second est}. To that end, we recall the interpolation inequality
  $\norm{u}{H^{1/2}(\partial\Omega)}^2 \lesssim \norm{u}{L^2(\partial\Omega)}\norm{u}{H^1(\partial\Omega)}$ for all %
  $u\in H^1(\partial\Omega)$.
  Since $\norm{(v-v_T)\eta_T}{L^2(\partial\Omega)} \le \norm{v-v_T}{L^2(\omega_\index(T))}$, we get
  \begin{align*}
   \norm{(v-v_T)\eta_\el}{H^{1/2}(\partial\Omega)}
   &\lesssim \norm{(v-v_T)\eta_\el}{L^2(\partial\Omega)}^{1/2}
   \norm{(v-v_T)\eta_\el}{H^1(\partial\Omega)}^{1/2}
   \\&
   \lesssim \norm{\meshsize\nabla_\Gamma v}{L^2(\omega_\index(\el))}^{1/2}
   \norm{\nabla_\Gamma v}{L^2(\omega_\index(\el))}^{1/2}
   \\&
   \simeq \norm{\meshsize^{1/2}\nabla_\Gamma v}{L^2(\omega_\index(\el))},
  \end{align*}
  where the last estimate hinges on $\kappa$-shape regularity of $\mesh_\index$ 
  (cf.~Lemma~\ref{lemma:patch}, (\ref{item:lemma:patch-i})). 
\end{proof}

Let $v\in \H^1(\Gamma)$. For each $T\in\mesh_\index$, let $v_T$ be the constant from 
Lemma~\ref{lemma:poincare}. For each $T \in \mesh_\index$ we define the near-field and the far-field
part of the double-layer potential $u_\dlp = \widetilde\dlp v$ by 
\begin{align}\label{eq:v_T}
 u_{\dlp,T}^\near := \widetilde\dlp\big((v-v_T)\eta_T\big)
 \quad\text{and}\quad
 u_{\dlp,T}^\far := \widetilde\dlp\big((v-v_T)(1-\eta_T)\big).
\end{align}
Note that $(v-v_T)\eta_T\in\H^1(\Gamma)\subseteq H^1(\partial\Omega)$ and 
$(v-v_T)(1-\eta_T)\in H^1(\partial\Omega)$ so that 
$u_{\dlp,T}^\near,u_{\dlp,T}^\far\in H^1(U\backslash\partial\Omega)$
are well-defined.
Since $\widetilde \dlp 1 \equiv -1$ in $\Omega$ and 
$\widetilde \dlp 1 \equiv 0$ in $\Omega^\e$, we have, for every $\el \in \mesh_\index$,
the identities 
\begin{align}
\label{eq:addition of near and far-field contributions of Ktilde}
\begin{split}
 u_\dlp + v_T &= u_{\dlp,T}^\near + u_{\dlp,T}^\far
 \quad\text{in }\Omega
\quad\text{and}\quad
 u_\dlp = u_{\dlp,T}^\near + u_{\dlp,T}^\far
 \quad\text{in }\Omega^\e.
\end{split}
\end{align}

\subsection{Inverse estimates for the near-field part $u_{\dlp,T}^\near$}
The following proposition provides an estimate for the near-field part of the double-layer potential.

\begin{proposition}[Near-field bound for $\widetilde\dlp$]
\label{lemma:nearfield:K}
Let $w_h$ be a $\sigma$-admissible weight function. There exists a 
constant $\setc{nearfield}>0$ depending only on $\partial\Omega$, $\Gamma$, the $\kappa$-shape regularity of $\mesh_\index$,
and $\sigma$ such that the near-field part $u_{\dlp,T}^\near$ satisfies 
$\gamma_0^\interior u_{\dlp,T}^\near\in H^1(\Gamma)$, $u_{\dlp,T}^\near|_\Omega\in H^1(\Omega)$,
and $u_{\dlp,T}^\near|_{U\setminus \overline{\Omega}}\in H^1(U \setminus \overline{\Omega})$  with
\begin{align}\label{eq:nearfieldK}
&\sum_{T\in\mesh_\index}\norm{\wght/\meshsize^{1/2}}{L^\infty(T)}^2\Big(
\norm{\meshsize^{1/2}\nabla_\Gamma \gamma_0^\interior u_{\dlp,T}^\near}{L^2(T)}^2
+ \norm{\nabla u_{\dlp,T}^\near}{L^2(U_T\cap\Omega)}^2
+ \norm{\nabla u_{\dlp,T}^\near}{L^2(U_T\cap\Omega^\e)}^2
\Big)
\notag\\ & \quad
\le \c{nearfield}\,\norm{\wght\nabla_\Gamma v}{L^2(\Gamma)}^2.
\end{align}%
\end{proposition}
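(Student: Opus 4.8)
## Proof Plan for Proposition~\ref{lemma:nearfield:K}

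The plan is to mimic the structure of the proof of Proposition~\ref{prop:nearfield:V}, the near-field bound for $\widetilde\slp$, replacing the density $\psi\chi_{\Gamma\cap U_T}$ by the compactly supported, $H^1$-regular function $(v-v_T)\eta_T$. First I would treat the tangential-derivative term. Using the mapping property \eqref{def:dlp} of $\dlp:H^1(\partial\Omega)\to H^1(\partial\Omega)$ together with $\gamma_0^\interior u_{\dlp,T}^\near = \dlp\big((v-v_T)\eta_T\big) - \tfrac12(v-v_T)\eta_T$ (recall $\dlp = \tfrac12 + \trace\widetilde\dlp$), one gets
\begin{align*}
\norm{\nabla_\Gamma \gamma_0^\interior u_{\dlp,T}^\near}{L^2(T)}
\lesssim \norm{(v-v_T)\eta_T}{H^1(\partial\Omega)}
\lesssim \norm{\nabla_\Gamma v}{L^2(\omega_\index(T))},
\end{align*}
where the last step is exactly the Poincar\'e–Friedrichs estimate \eqref{est:poincare on patch - third est} of Lemma~\ref{lemma:poincare}. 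Multiplying by $\meshsize^{1/2}$ and then by $\norm{\wght/\meshsize^{1/2}}{L^\infty(T)}$ produces $\norm{\wght}{L^\infty(T)}\norm{\nabla_\Gamma v}{L^2(\omega_\index(T))}$; summation over $T$, using the finite-overlap property \eqref{UT:overlap} of the patches and the $\sigma$-admissibility of $\wght$, bounds this contribution by $\norm{\wght\nabla_\Gamma v}{L^2(\Gamma)}^2$.

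Next I would handle the two volume gradient terms $\norm{\nabla u_{\dlp,T}^\near}{L^2(U_T\cap\Omega)}$ and $\norm{\nabla u_{\dlp,T}^\near}{L^2(U_T\cap\Omega^\e)}$. The natural tool is the mapping property $\widetilde\dlp\in L\big(H^{1/2}(\partial\Omega);H^1(U\setminus\partial\Omega)\big)$ from \eqref{eq:mapping-properties-potentials}. This gives
\begin{align*}
\norm{\nabla u_{\dlp,T}^\near}{L^2(U\setminus\partial\Omega)}
\lesssim \norm{(v-v_T)\eta_T}{H^{1/2}(\partial\Omega)}
\lesssim \norm{\meshsize^{1/2}\nabla_\Gamma v}{L^2(\omega_\index(T))},
\end{align*}
invoking the $H^{1/2}$-Poincar\'e bound \eqref{est:poincare on patch - second est}. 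Since $U_T\cap\Omega$ and $U_T\cap\Omega^\e$ are both contained in $U\setminus\partial\Omega$, this controls both volume terms. Multiplying by $\norm{\wght/\meshsize^{1/2}}{L^\infty(T)}$ yields, after using admissibility of $\wght$ on $\omega_\index(T)$, a bound by $\norm{\wght\nabla_\Gamma v}{L^2(\omega_\index(T))}^2$; summing over $T$ with \eqref{UT:overlap} finishes this piece as well. The regularity assertions $\gamma_0^\interior u_{\dlp,T}^\near\in H^1(\Gamma)$, $u_{\dlp,T}^\near|_\Omega\in H^1(\Omega)$, $u_{\dlp,T}^\near|_{U\setminus\overline\Omega}\in H^1(U\setminus\overline\Omega)$ are immediate from these same mapping properties applied to the $H^1(\partial\Omega)$-function $(v-v_T)\eta_T$, whose $\Gamma$-support lies in $\overline{\omega_\index(T)}$.

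The one point requiring care, and the main obstacle, is that the crude application of the potential mapping property gives a $\meshsize$-independent constant but the correct $\meshsize$-scaling must come \emph{entirely} from the Poincar\'e–Friedrichs estimates of Lemma~\ref{lemma:poincare}, and from the admissibility weight $\norm{\wght/\meshsize^{1/2}}{L^\infty(T)}$ multiplying the volume terms; one must check that these powers match. Concretely, for the volume terms the factor $\norm{\wght/\meshsize^{1/2}}{L^\infty(T)}$ pairs with the $\meshsize^{1/2}$ in \eqref{est:poincare on patch - second est} to give $\norm{\wght}{L^\infty(\omega_\index(T))}$, and for the surface term the explicit $\meshsize^{1/2}$ in \eqref{eq:nearfieldK} pairs with $\norm{\wght/\meshsize^{1/2}}{L^\infty(T)}$ the same way, after which \eqref{est:poincare on patch - third est} (which has no $\meshsize$ weight) is exactly right. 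A secondary subtlety, already addressed in Lemma~\ref{lemma:poincare}, is the choice of the constant $v_T$: near $\partial\Gamma$ one must take $v_T=0$ so that $(v-v_T)\eta_T$ genuinely lies in $\widetilde H^1(\Gamma)$ and thus in $H^{1/2}(\partial\Omega)$ with equivalent norm; since Lemma~\ref{lemma:poincare} supplies exactly such a $v_T$, no new work is needed here. Finally, since $\meshsize(T)\lesssim|\Gamma|^{1/(d-1)}\lesssim1$, any leftover positive power of $\meshsize$ can be absorbed into the constant, so the estimate \eqref{eq:nearfieldK} follows by collecting the two contributions.
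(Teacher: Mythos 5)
Your proposal is correct and follows essentially the same route as the paper's proof: the surface-gradient term is controlled via the $H^1(\partial\Omega)\to H^1(\partial\Omega)$ stability of $\trace\widetilde\dlp=\dlp-\tfrac12$ combined with the $H^1$ Poincar\'e--Friedrichs bound \eqref{est:poincare on patch - third est}, and the two volume-gradient terms via $\widetilde\dlp\in L\big(H^{1/2}(\partial\Omega);H^1(U\setminus\partial\Omega)\big)$ together with the $H^{1/2}$ bound \eqref{est:poincare on patch - second est}, then summing over $T$ using finite overlap, patch-size comparability, and $\sigma$-admissibility. The bookkeeping of $\meshsize$ powers and the remark about $v_T=0$ near $\partial\Gamma$ match the paper's treatment.
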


\begin{proof}
Recall stability~\eqref{def:dlp} of $\trace\widetilde\dlp = \dlp-\frac{1}{2}: H^1(\partial\Omega)\rightarrow H^1(\partial\Omega)$. 
  Taking into account~\eqref{eta:properties}
  and the Poincar\'e-type estimate~\eqref{est:poincare on patch - third est}, we observe
\begin{align*}
\norm{\nabla_\Gamma \gamma_0^\interior u_{\dlp,\el}^\near}{L^2(\el)}
&
\leq \norm{\nabla_\Gamma \gamma_0^\interior u_{\dlp,\el}^\near}{L^2(\Gamma)}
\overset{\eqref{def:dlp}}{\lesssim} \norm{(v-v_T)\eta_\el}{H^1(\partial\Omega)}
\overset{\eqref{est:poincare on patch - third est}}{\lesssim} \norm{\nabla_\Gamma v}{L^2(\omega_\index(T))}.
\end{align*}
  Summation over all $\el\in\mesh_\index$ shows
\begin{align}\label{eq1:nearfield:K}
  \sum_{T\in\mesh_\index}\norm{\wght/\meshsize^{1/2}}{L^\infty(T)}^2\norm{\meshsize^{1/2}\nabla_\Gamma \gamma_0^\interior u_{\dlp,T}^\near}{L^2(T)}^2
  \lesssim \norm{\wght\nabla_\Gamma v}{L^2(\Gamma)}^2.
\end{align}%
Next, we use the continuity of $\widetilde\dlp: H^{1/2}(\partial\Omega) \to H^1(U\setminus\partial\Omega)$ from~\eqref{eq:mapping-properties-potentials} and get 
\begin{align*}
\norm{\nabla u_{\dlp,\el}^\near}{L^2(U_\el\cap\Omega)}^2 +
\norm{\nabla u_{\dlp,\el}^\near}{L^2(U_\el\cap\Omega^\e)}^2
\overset{\eqref{eq:mapping-properties-potentials}}{\lesssim} \norm{(v-v_T)\eta_\el}{H^{1/2}(\partial\Omega)}^2
\overset{\eqref{est:poincare on patch - second est}}{\lesssim} \norm{\meshsize^{1/2}\nabla_\Gamma v}{L^2(\omega_\index(\el))}^2.
  \end{align*}
  Summation over all $\el\in\mesh_\index$ gives
\begin{align}\label{eq2:nearfield:K}
  \sum_{T\in\mesh_\index}\norm{\wght/\meshsize^{1/2}}{L^\infty(T)}^2\Big(\norm{\nabla u_{\dlp,T}^\near}{L^2(U_T\cap\Omega)}^2
  + \norm{\nabla u_{\dlp,T}^\near&}{L^2(U_T\cap\Omega^\e)}^2
  \Big)
  \lesssim \norm{\wght\nabla_\Gamma v}{L^2(\Gamma)}^2.
  \end{align}
  Combining~\eqref{eq1:nearfield:K}--\eqref{eq2:nearfield:K}, we conclude the proof.
\end{proof}
%
\subsection{Estimates for the far-field part $u_{\dlp,\el}^\far$}
As for the simple-layer potential, we have a Caccioppoli inequality for the double-layer potential, 
which underlies the analysis of the far-field contribution. 

For the next result, recall $U^\prime_T$ from \eqref{eq:definition-eta_T}. 
\begin{lemma}[Caccioppoli inequality for $u_{\dlp,T}^\far$]
\label{lemma:caccioppoli:K}
For the constant $\c{caccioppoli}$ of Lemma~\ref{lemma:caccioppoli:V},
the functions $u_{\dlp,T}^\far$ of \eqref{eq:v_T} satisfy
  $u_{\dlp,T}^\far|_\Omega \in C^\infty(\Omega)$,
  $u_{\dlp,T}^\far|_{\Omega^\e} \in C^\infty(\Omega^\e)$, and 
  $u_{\dlp,T}^\far|_{U_T^\prime}\in C^\infty(U^\prime_T)$ together with 
  \begin{align}\label{eq:caccioppoli:K}
    \norm{D^2u_{\dlp,T}^\far}{L^2(B_{\delta\meshsize(T)/8}( x))}
     \le \c{caccioppoli}\,\frac{1}{\meshsize(T)}\,\norm{\nabla u_{\dlp,T}^\far}{L^2(B_{\delta \meshsize(T)/4}( x))}
     \quad\text{for all } x\in T\in\mesh_\index.
  \end{align}
\end{lemma}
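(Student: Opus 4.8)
The plan is to mimic the structure of the proof of Lemma~\ref{lemma:caccioppoli:V} for the simple-layer case, exploiting the fact that the Caccioppoli inequality \eqref{eq:caccioppoli:V} was already proved there for \emph{any} function that is harmonic on the relevant ball. Indeed, \eqref{est:inverse_est_cf_Morrey}, taken from \cite[Lemma~{5.7.1}]{morrey66}, applies to any $H^1$-function with $L^2$-Laplacian, so once we know $u_{\dlp,T}^\far$ is smooth and harmonic on $U_T^\prime$, the estimate \eqref{eq:caccioppoli:K} follows verbatim by applying \eqref{est:inverse_est_cf_Morrey} with $r = \eps = \delta h(T)/8$, $f = 0$, $u = u_{\dlp,T}^\far - c_T$ (subtracting the mean $c_T$ over $B_{\delta h(T)/4}(x)$), and a Poincar\'e inequality. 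The only new point, compared to the simple-layer case, is to justify interior smoothness and harmonicity of $u_{\dlp,T}^\far$ on $U_T^\prime$ --- the radii $\delta h(T)/8$ and $\delta h(T)/4$ in \eqref{eq:caccioppoli:K} are exactly chosen so that for $x \in T$ the larger ball $B_{\delta h(T)/4}(x)$ is contained in $U_T^\prime = \cup_{x \in T} B_{\delta h(T)/4}(x)$, so interior regularity there suffices.

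The steps, in order, are as follows. First I would record the smoothness statements $u_{\dlp,T}^\far|_\Omega \in C^\infty(\Omega)$ and $u_{\dlp,T}^\far|_{\Omega^\e} \in C^\infty(\Omega^\e)$, which again come from \cite[Thm.~{3.1.1}]{ss} since $u_{\dlp,T}^\far = \widetilde\dlp((v - v_T)(1-\eta_T))$ is a double-layer potential of an $H^1(\partial\Omega)$-density. Second, I would identify the transmission problem satisfied by $u_{\dlp,T}^\far$: by the jump relations of the double-layer potential (see \cite[Sec.~{3.3}]{ss}), $u_{\dlp,T}^\far$ is harmonic in $\Omega \cup \Omega^\e$, has conormal-derivative jump $[\gamma_1 u_{\dlp,T}^\far] = 0$, and trace jump $[u_{\dlp,T}^\far] = -(v - v_T)(1-\eta_T)$ on $\partial\Omega$. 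Since $\eta_T \equiv 1$ on $U_T^\prime$ by \eqref{eta:properties}, the density $(v-v_T)(1-\eta_T)$ vanishes on $U_T^\prime \cap \partial\Omega$, so both jumps vanish there. Third, I would invoke the same classical argument as in Lemma~\ref{lemma:caccioppoli:V}: a two-fold integration by parts against test functions $\varphi \in C_0^\infty(U_T^\prime)$ shows $u_{\dlp,T}^\far$ is distributionally harmonic on $U_T^\prime$, and Weyl's lemma \cite[Theorem~{2.3.1}]{morrey66} gives $u_{\dlp,T}^\far \in C^\infty(U_T^\prime)$ with $\Delta u_{\dlp,T}^\far = 0$ there. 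Fourth, I would apply \eqref{est:inverse_est_cf_Morrey} plus Poincar\'e exactly as in Lemma~\ref{lemma:caccioppoli:V} to conclude \eqref{eq:caccioppoli:K}, observing that the constant is the same $\c{caccioppoli}$ since the argument is identical.

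The main obstacle --- and really the only thing that needs care --- is the bookkeeping of which cut-off radii guarantee that $(v-v_T)(1-\eta_T)$ vanishes on a neighborhood of $x \in T$ large enough to contain the two balls $B_{\delta h(T)/8}(x) \subset B_{\delta h(T)/4}(x)$ appearing in \eqref{eq:caccioppoli:K}. This is precisely ensured by the definition \eqref{eq:definition-eta_T}: $\eta_T = \chi_{\widetilde U_T} \star \rho_{\delta h(T)/4}$ with $\widetilde U_T = \cup_{x \in T} B_{\delta h(T)/2}(x)$, hence $\eta_T \equiv 1$ on $U_T^\prime = \cup_{x \in T} B_{\delta h(T)/4}(x)$, and for $x \in T$ the ball $B_{\delta h(T)/4}(x)$ lies in $U_T^\prime$. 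So the density $(v-v_T)(1-\eta_T)$ indeed vanishes there, the jump conditions are homogeneous on $B_{\delta h(T)/4}(x) \cap \partial\Omega$, and the harmonicity-via-Weyl argument applies on $B_{\delta h(T)/4}(x)$. One further minor subtlety is that $v - v_T$ is only in $H^1(\partial\Omega)$ (not $\H^1$), but since we multiply by $(1-\eta_T)$ --- which is smooth and supported where there are no boundary-of-$\Gamma$ issues --- the product is a legitimate $H^1(\partial\Omega)$ density and the potential $u_{\dlp,T}^\far$ is well-defined in $H^1(U \setminus \partial\Omega)$ as already noted after \eqref{eq:v_T}; I would simply reference that remark.
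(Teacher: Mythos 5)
Your proposal follows the same route as the paper: identify the transmission problem for $u_{\dlp,T}^\far$, observe that both the trace jump and the conormal-derivative jump vanish on $U_T^\prime \cap \partial\Omega$ because $\eta_T \equiv 1$ there, and then run the same Weyl--lemma/Morrey-interior-regularity argument as in Lemma~\ref{lemma:caccioppoli:V} on the concentric balls $B_{\delta h(T)/8}(x) \subset B_{\delta h(T)/4}(x) \subset U_T^\prime$. The only (immaterial) discrepancy is a sign in the trace jump --- with the paper's convention $[u] = \trext u - \trace u$ one has $[\widetilde\dlp w] = +w$, so $[u_{\dlp,T}^\far] = +(v - v_T)(1-\eta_T)$, not $-(v-v_T)(1-\eta_T)$; since all that is used is that this jump vanishes near $T$, the argument is unaffected.
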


\begin{proof}
  The proof is very similar to that of Lemma~\ref{lemma:caccioppoli:V}. One observes 
  that the far-field $u_{\dlp,T}^\far$ solves the transmission problem
  \begin{align*}
  \begin{array}{rcll}
    -\Delta u_{\dlp,T}^\far &=& 0 \quad &\text{in } \Omega\cup\Omega^\e,\\{}
    [u_{\dlp,T}^\far] &=& (v-v_T)(1-\eta_T) &\text{in } H^{1/2}(\partial\Omega),\\{}
    [\gamma_1 u_{\dlp,T}^\far] &=& 0 &\text{in }H^{-1/2}(\partial\Omega).
  \end{array}
  \end{align*}
 We note that $(1-\eta_T)|_{\Gamma\cap U_T^\prime}=0$ by construction of $\eta_T$ 
  in (\ref{eta:properties}). 
Hence, the same reasoning as in the proof of 
  Lemma~\ref{lemma:caccioppoli:V} can be applied to reach the conclusion 
  \eqref{eq:caccioppoli:K}. 
\end{proof}

\begin{lemma}[Local far-field bound for $\widetilde\dlp$]\label{lemma:farfield:K}
  For all $T\in\mesh_\index$
\begin{align}
\label{eq:farfield:K}
\norm{\meshsize^{1/2}\nabla_\Gamma \gamma_0^\interior u_{\dlp,T}^\far}{L^2(T)}
\le \norm{\meshsize^{1/2}\nabla u_{\dlp,T}^\far}{L^2(T)}
\le \c{farfield}\,\norm{\nabla u_{\dlp,T}^\far}{L^2(U^\prime_T)}. 
  \end{align}
  The constant $\setc{farfield}>0$ depends only on $\partial\Omega$ and the $\kappa$-shape regularity constant of $\mesh_\index$.
\end{lemma}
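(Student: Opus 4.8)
The plan is to follow the proof of Lemma~\ref{lemma:farfield:V} essentially line by line, the one structural change being that the neighbourhood $U_T$ is everywhere replaced by the smaller collar $U_T'$ of \eqref{eq:definition-eta_T}. This replacement is forced because, unlike the simple-layer far-field, the double-layer far-field $u_{\dlp,T}^\far$ is only known to be smooth and harmonic on $U_T'$ (and not on all of $U_T$): its transmission jump $(v-v_T)(1-\eta_T)$ vanishes only where $\eta_T\equiv1$, i.e.\ on $\Gamma\cap U_T'$, which is precisely the content of Lemma~\ref{lemma:caccioppoli:K}.

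For the first (left) inequality in \eqref{eq:farfield:K} I would argue exactly as in Lemma~\ref{lemma:farfield:V}: by Lemma~\ref{lemma:caccioppoli:K} the function $u_{\dlp,T}^\far$ lies in $C^\infty(U_T')$ and is in particular continuous across $\partial\Omega\cap U_T'$, so that $\gamma_0^\interior u_{\dlp,T}^\far$ on $T$ is just the restriction of this smooth function; since for smooth functions the surface gradient is the tangential component of the full gradient, $\nabla_\Gamma\gamma_0^\interior u(x)=\nabla u(x)-\big(\nabla u(x)\cdot\normal(x)\big)\,\normal(x)$, one gets $\abs{\nabla_\Gamma\gamma_0^\interior u_{\dlp,T}^\far}\le\abs{\nabla u_{\dlp,T}^\far}$ pointwise on $T$, hence the first estimate.

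For the second (right) inequality I would reproduce the three steps of Lemma~\ref{lemma:farfield:V}, rescaling the radii so that every ball used stays inside $U_T'$. \emph{Step~1:} the multiplicative trace inequality \eqref{eq:foo-100} holds verbatim (it uses only a smooth compactly supported cut-off and the standard trace inequality on $\partial\Omega$); I apply it to $v=\nabla u_{\dlp,T}^\far$ on balls $B=B_\rho(x)$ with $x\in T$ and $\rho:=\delta\meshsize(T)/16$, chosen so small that the enlarged ball $B_{3\rho/2}(x)$ of \eqref{eq:foo-100} and the ball $B_{\delta\meshsize(T)/4}(x)$ of \eqref{eq:caccioppoli:K} both lie in $U_T'$. \emph{Step~2:} Besicovitch's covering theorem applied to $\{\overline{B_\rho(x)}:x\in T\}$ gives $N_d$ subfamilies of pairwise disjoint balls whose union covers $T$, with $N_d$ depending only on $d$; a standard enlargement argument shows that the balls $B_{\delta\meshsize(T)/4}(x)$ with $B_\rho(x)$ in a fixed subfamily have bounded overlap $\widehat N_d$, again depending only on $d$. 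Combining \eqref{eq:foo-100} with the Caccioppoli inequality \eqref{eq:caccioppoli:K} yields, for each such $B$,
\[
\norm{\nabla u_{\dlp,T}^\far}{L^2(B\cap T)}^2\lesssim \frac{1}{\meshsize(T)}\,\norm{\nabla u_{\dlp,T}^\far}{L^2(B_{\delta\meshsize(T)/4}(x))}^2.
\]
\emph{Step~3:} summing over the finitely many subfamilies and using the bounded overlap of the balls $B_{\delta\meshsize(T)/4}(x)\subseteq U_T'$ gives $\norm{\nabla u_{\dlp,T}^\far}{L^2(T)}^2\lesssim \meshsize(T)^{-1}\norm{\nabla u_{\dlp,T}^\far}{L^2(U_T')}^2$, which is \eqref{eq:farfield:K} after multiplying by $\meshsize(T)$.

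I do not expect any genuinely new analytic obstacle here beyond Lemma~\ref{lemma:farfield:V}; the step needing the most care is the bookkeeping with radii in Steps~1 and~2. Because the transmission jump of $u_{\dlp,T}^\far$ vanishes only on the small collar $\Gamma\cap U_T'$, every ball entering the trace inequality and the Caccioppoli estimate \eqref{eq:caccioppoli:K} must be kept inside $U_T'$, which forces the scaled radii $\delta\meshsize(T)/16$, $\delta\meshsize(T)/8$, $\delta\meshsize(T)/4$ in place of the $\delta\meshsize(T)/2$, $\delta\meshsize(T)$, $2\delta\meshsize(T)$ used for the simple layer. Since $\delta$, $N_d$, $\widehat N_d$ are fixed and independent of $\mesh_\index$, all hidden constants depend only on $\partial\Omega$ and the $\kappa$-shape regularity of $\mesh_\index$, exactly as claimed.
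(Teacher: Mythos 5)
Your proof takes essentially the same route as the paper, which simply says the lemma is shown exactly as Lemma~\ref{lemma:farfield:V} with the Caccioppoli inequality~\eqref{eq:caccioppoli:K} substituted for~\eqref{eq:caccioppoli:V}. You have in fact supplied the detail the paper leaves implicit, namely the rescaling of the Besicovitch balls (radii $\delta h(T)/16$, $3\delta h(T)/32 \subset \delta h(T)/8$, then $\delta h(T)/4$) so that the trace inequality~\eqref{eq:foo-100} and the Caccioppoli estimate~\eqref{eq:caccioppoli:K} remain inside $U_T'$, where $u_{\dlp,T}^\far$ is smooth; your bookkeeping is consistent with the radii appearing in~\eqref{eq:caccioppoli:K} and in the definition~\eqref{eq:definition-eta_T} of $U_T'$.
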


\begin{proof}
The lemma is shown in exactly the same way as the corresponding bound for 
the simple-layer potential $\slp$ in Lemma~\ref{lemma:farfield:V}, appealing
to the Caccioppoli inequality \eqref{eq:caccioppoli:K} instead of \eqref{eq:caccioppoli:V}. 
\end{proof}

\begin{proposition}[Far-field bound for $\widetilde\dlp$]\label{prop:farfield:K}
Let $w_h$ be a $\sigma$-admissible weight function. 
  There is a constant $\c{farfield}>0$ depending only on $\partial\Omega$, the $\kappa$-shape regularity constant
of $\mesh_\index$, and $\sigma$ such that 
\begin{align*}
\sum_{\el\in\mesh_\index}\norm{\wght/\meshsize^{1/2}}{L^\infty(T)}^2&\norm{\meshsize^{1/2}\nabla_\Gamma \gamma_0^\interior u_{\dlp,\el}^\far}{L^2(\el)}^2\\
&\leq \sum_{\el\in\mesh_\index}\norm{\wght\nabla u_{\dlp,\el}^\far}{L^2(\el)}^2\\
&\leq \c{farfield} \left(\norm{\wght\nabla_\Gamma v}{L^2(\Gamma)}^2
+ \norm{\wght/\meshsize^{1/2}}{L^\infty(\Gamma)}^2\norm{v}{\H^{1/2}(\Gamma)}^2\right).
  \end{align*}
\end{proposition}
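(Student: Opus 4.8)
The plan is to transcribe the proof of Proposition~\ref{prop:farfield:V} almost verbatim, substituting Lemma~\ref{lemma:farfield:K} for Lemma~\ref{lemma:farfield:V}, the near-field bound of Proposition~\ref{lemma:nearfield:K} for that of Proposition~\ref{prop:nearfield:V}, and the decomposition~\eqref{eq:addition of near and far-field contributions of Ktilde} of $u_\dlp$ for the trivial splitting $u_\slp = u_{\slp,T}^\near + u_{\slp,T}^\far$.

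First I would settle the first inequality. By Lemma~\ref{lemma:caccioppoli:K} the function $u_{\dlp,T}^\far$ is $C^\infty$ on the neighbourhood $U_T^\prime$ of $T\subseteq\partial\Omega$, so the surface gradient $\nabla_\Gamma\gamma_0^\interior u_{\dlp,T}^\far$ is the tangential projection of the ambient gradient and hence $|\nabla_\Gamma\gamma_0^\interior u_{\dlp,T}^\far|\le|\nabla u_{\dlp,T}^\far|$ pointwise on $T$. Since $\meshsize$ is constant on $T$, the $\sigma$-admissibility of $\wght$ gives $\norm{\wght/\meshsize^{1/2}}{L^\infty(T)}\,\meshsize^{1/2}\le\sigma\,\wght$ a.e.\ on $T$, so that $\norm{\wght/\meshsize^{1/2}}{L^\infty(T)}^2\norm{\meshsize^{1/2}\nabla_\Gamma\gamma_0^\interior u_{\dlp,T}^\far}{L^2(T)}^2\le\sigma^2\norm{\wght\nabla u_{\dlp,T}^\far}{L^2(T)}^2$; summing over $T\in\mesh_\index$ yields the first inequality (up to this harmless $\sigma$-factor, which is absorbed into $\c{farfield}$ in the final chain).

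For the second inequality I would start from the local far-field bound~\eqref{eq:farfield:K}. Using once more that $\meshsize|_T$ is constant, it reads $\norm{\wght\nabla u_{\dlp,T}^\far}{L^2(T)}\le\norm{\wght/\meshsize^{1/2}}{L^\infty(T)}\norm{\meshsize^{1/2}\nabla u_{\dlp,T}^\far}{L^2(T)}\le\c{farfield}\norm{\wght/\meshsize^{1/2}}{L^\infty(T)}\norm{\nabla u_{\dlp,T}^\far}{L^2(U_T^\prime)}$. I would then invoke~\eqref{eq:addition of near and far-field contributions of Ktilde}: since the additive constants ($v_T$ in $\Omega$ and $0$ in $\Omega^\e$) disappear under differentiation, $\nabla u_{\dlp,T}^\far=\nabla u_\dlp-\nabla u_{\dlp,T}^\near$ holds a.e.\ on $U_T^\prime\setminus\partial\Omega$, whence, using $U_T^\prime\subseteq U_T$, one gets $\norm{\nabla u_{\dlp,T}^\far}{L^2(U_T^\prime)}^2\lesssim\norm{\nabla u_\dlp}{L^2(U_T)}^2+\norm{\nabla u_{\dlp,T}^\near}{L^2(U_T\cap\Omega)}^2+\norm{\nabla u_{\dlp,T}^\near}{L^2(U_T\cap\Omega^\e)}^2$.

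It remains to sum these elementwise estimates against the weights $\norm{\wght/\meshsize^{1/2}}{L^\infty(T)}^2$. The $u_\dlp$-term is controlled by the finite-overlap property~\eqref{UT:overlap}, the inclusion $U_T\subseteq U$, the mapping property $\widetilde\dlp\in L(H^{1/2}(\partial\Omega);H^1(U\setminus\partial\Omega))$ from~\eqref{eq:mapping-properties-potentials}, and the identity $\norm{v}{\H^{1/2}(\Gamma)}=\norm{v}{H^{1/2}(\partial\Omega)}$, giving $\sum_{T\in\mesh_\index}\norm{\wght/\meshsize^{1/2}}{L^\infty(T)}^2\norm{\nabla u_\dlp}{L^2(U_T)}^2\lesssim\norm{\wght/\meshsize^{1/2}}{L^\infty(\Gamma)}^2\norm{v}{\H^{1/2}(\Gamma)}^2$; the two near-field terms are precisely the quantities estimated by~\eqref{eq:nearfieldK} of Proposition~\ref{lemma:nearfield:K}, contributing $\norm{\wght\nabla_\Gamma v}{L^2(\Gamma)}^2$. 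Adding the contributions proves the second inequality. I do not expect a genuine obstacle here: the argument is a faithful copy of the simple-layer case. The only points demanding attention are bookkeeping ones---that Lemma~\ref{lemma:farfield:K} is stated over the smaller neighbourhood $U_T^\prime$ rather than $U_T$ (so $U_T^\prime\subseteq U_T$ must be used before the finite-overlap bound), and that the far-field is now built from the shifted, smoothly truncated density $(v-v_T)(1-\eta_T)$, so that the decomposition of $u_\dlp$ carries the constant $v_T$ inside $\Omega$; this is immaterial since only $\nabla u_{\dlp,T}^\far$ away from $\partial\Omega$ enters the estimate.
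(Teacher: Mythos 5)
Your proof is correct and essentially identical to the paper's own: both establish the second inequality by chaining Lemma~\ref{lemma:farfield:K} with the decomposition~\eqref{eq:addition of near and far-field contributions of Ktilde} (the additive constants $v_T$ resp.~$0$ drop under the gradient), the near-field bound~\eqref{eq:nearfieldK}, the mapping property~\eqref{eq:mapping-properties-potentials} of $\widetilde\dlp$, and the finite-overlap property~\eqref{UT:overlap}, together with $\|v\|_{\H^{1/2}(\Gamma)}=\|v\|_{H^{1/2}(\partial\Omega)}$. If anything, your treatment of the first inequality is a touch more careful: the paper's displayed proof actually bounds $\sum_T\|w_h\nabla_\Gamma\gamma_0^\interior u_{\dlp,T}^\far\|_{L^2(T)}^2$ rather than the formally larger weighted quantity appearing on the left of the proposition, whereas your $\sigma$-admissibility step bridges this gap explicitly, at the cost of a $\sigma^2$ factor that is harmlessly absorbed into $\c{farfield}$.
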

\begin{proof}
Lemma~\ref{lemma:farfield:K} implies 
\begin{align}
\label{prop:invest:K:eq:2}
&\sum_{\el\in\mesh_\index}\norm{\wght\nabla_\Gamma \gamma_0^\interior u_{\dlp,\el}^\far}{L^2(\el)}^2
\leq \sum_{\el\in\mesh_\index}\norm{\wght\nabla u_{\dlp,\el}^\far}{L^2(\el)}^2
\lesssim \sum_{\el\in\mesh_\index}
\norm{\wght/\meshsize^{1/2}}{L^\infty(T)}^2\norm{\nabla u_{\dlp,\el}^\far}{L^2(U_\el^\prime)}^2
\nonumber\\ &
= \sum_{\el\in\mesh_\index}\norm{\wght/\meshsize^{1/2}}{L^\infty(T)}^2\norm{\nabla u_{\dlp,\el}^\far}{L^2(U_\el^\prime\cap\Omega)}^2
+ \sum_{\el\in\mesh_\index}\norm{\wght/\meshsize^{1/2}}{L^\infty(T)}^2\norm{\nabla u_{\dlp,\el}^\far}{L^2(U_\el^\prime\cap\Omega^\e)}^2.
\end{align}
With the identities~\eqref{eq:addition of near and far-field contributions of Ktilde}
 and a triangle inequality, we therefore obtain
\begin{align}
\begin{split}
\sum_{\el\in\mesh_\index}&\norm{\wght\nabla u_{\dlp,\el}^\far}{L^2(\el)}^2\\
&\overset{\eqref{prop:invest:K:eq:2}}{\lesssim} \sum_{\el\in\mesh_\index}
\norm{\wght/\meshsize^{1/2}}{L^\infty(T)}^2\Bigl( \norm{\nabla \widetilde\dlp(v-v_T)}{L^2(U_\el^\prime\cap\Omega)}^2
+ \norm{\nabla \widetilde\dlp v}{L^2(U_\el^\prime\cap\Omega^\e)}^2\Bigr)
\\ &\qquad
+ \sum_{\el\in\mesh_\index}\norm{\wght/\meshsize^{1/2}}{L^\infty(T)}^2\Bigl( \norm{\nabla u_{\dlp,\el}^\near}{L^2(U_\el^\prime\cap\Omega)}^2
+ \norm{\nabla u_{\dlp,\el}^\near}{L^2(U_\el^\prime\cap\Omega^\e)}^2 \Bigr).
\end{split}
\end{align}
The near-field contribution is bounded by Proposition~\ref{lemma:nearfield:K}.
Furthermore, noting $\nabla\widetilde\dlp v_T = \nabla(-v_T)=0$ in $\Omega$, we get 
\begin{align*}
\sum_{\el\in\mesh_\index}&\norm{\wght\nabla u_{\dlp,\el}^\far}{L^2(\el)}^2\\
&
\overset{\eqref{eq:nearfieldK}}{\lesssim} \norm{\wght/\meshsize^{1/2}}{L^\infty(\Gamma)}^2\sum_{\el\in\mesh_\index}
\Bigl( \norm{\nabla \widetilde\dlp v}{L^2(U_\el^\prime\cap\Omega)}^2
+ \norm{\nabla \widetilde\dlp v}{L^2(U_\el^\prime\cap\Omega^\e)}^2 \Bigr)
+ \norm{\wght\nabla_\Gamma v}{L^2(\Gamma)}^2
\\ &
\overset{\eqref{eq:mapping-properties-potentials}}{\lesssim} \norm{w_\index/\meshsize^{1/2}}{L^\infty(\Gamma)}^2\norm{v}{\H^{1/2}(\Gamma)}^2
+ \norm{w_\index\nabla_\Gamma v}{L^2(\Gamma)}^2,
\end{align*}
where we have used continuity \eqref{eq:mapping-properties-potentials} of $\widetilde\dlp$, 
the overlap property \eqref{UT:overlap}, 
and $\norm{v}{\H^{1/2}(\Gamma)}=\norm{v}{H^{1/2}(\partial\Omega)}$.
\end{proof}


\section{Proof of Theorem~\ref{thm:invest}}
\label{section:proof}

\noindent
Finally, we are in a position to prove the inverse estimates~\eqref{eq:invest:V}, \eqref{eq:invest:K} 
of Theorem~\ref{thm:invest}.

\begin{proof}[Proof of the inverse estimate~\eqref{eq:invest:V}] 
Let $\psi \in L^2(\Gamma)$, 
extend $\psi$ by zero to the entire boundary
$\partial\Omega$, and recall the notation from Section~\ref{sec:notation-slp}. 
First, we treat the simple-layer potential $\slp$.
With the bounds of 
Propositions~\ref{prop:nearfield:V} and \ref{prop:farfield:V} we get 
\begin{align}\label{eq1:invest}
\begin{split}
\norm{\wght\nabla_\Gamma \slp\psi}{L^2(\Gamma)}^2
&
= \sum_{T\in\mesh_\index} \norm{\wght\nabla_\Gamma \slp\psi}{L^2(T)}^2
\\ &
\lesssim \sum_{T\in\mesh_\index}\norm{\wght\nabla_\Gamma \gamma_0^\interior u_{\slp,\el}^\far}{L^2(T)}^2
+ \sum_{T\in\mesh_\index}\norm{\wght\nabla_\Gamma \gamma_0^\interior u_{\slp,\el}^\near}{L^2(T)}^2 \\
&
\lesssim \norm{\wght/\meshsize^{1/2}}{L^\infty(\Gamma)}^2\norm{\psi}{\H^{-1/2}(\Gamma)}^2
+ \norm{\wght\psi}{L^2(\Gamma)}^2.
\end{split}
\end{align}
The estimate for the adjoint double-layer potential $\dlp'$ follows by similar arguments.
We split the left-hand side into near-field and far-field contributions to obtain
\begin{align}
  \label{eq:invest:Kadj:1}
\hspace*{-2mm}
    \norm{\wght\dlp'\psi}{L^2(\Gamma)}^2
    \lesssim \sum_{\el\in\mesh_\index}&\norm{\wght}{L^\infty(T)}^2\norm{\dlp'(\psi\chi_{U_\el\cap\Gamma})}{L^2(\el)}^2
    +\sum_{\el\in\mesh_\index}\norm{\wght}{L^\infty(T)}^2\norm{\dlp'(\psi\chi_{\Gamma\setminus U_\el})}{L^2(\el)}^2.
\end{align}
The continuity $\dlp':L^2(\partial\Omega)\rightarrow L^2(\partial\Omega)$ stated in \eqref{def:adlp}  yields 
for the near-field contribution
\begin{align*}
\sum_{\el\in\mesh_\index}\norm{\wght}{L^\infty(T)}^2\norm{\dlp'(\psi\chi_{U_\el\cap\Gamma})}{L^2(\el)}^2
&\leq \sum_{\el\in\mesh_\index}\norm{\wght}{L^\infty(T)}^2\norm{\dlp'(\psi\chi_{U_\el\cap\Gamma})}{L^2(\partial\Omega)}^2\\
&
\overset{\eqref{def:adlp}}{\lesssim} \sum_{\el\in\mesh_\index}\norm{\wght}{L^\infty(T)}^2\norm{\psi}{L^2(U_\el\cap\Gamma)}^2
\lesssim \norm{\wght\psi}{L^2(\Gamma)}^2.
\end{align*}
For the far-field contribution, we write 
$u_{\slp,T}^\far  = \widetilde \slp (\psi \chi_{\Gamma \setminus U_T})$
and note that $\dlp' = -1/2 + \gint \widetilde\slp$
and clearly $(\psi\chi_{\Gamma\backslash U_T})|_T=0$. Therefore, 
on $T$ we have $\dlp' (\psi \chi_{\Gamma\setminus U_T}) = \gint u_{\slp,T}^\far$. 
Furthermore, by the smoothness of $u_{\slp,T}^\far$ near $T$ (see 
Lemma~\ref{lemma:caccioppoli:V}), we have 
$\gint u_{\slp,T}^\far = \partial_{\normal} u_{\slp,T}^\far$ on $T$
(cf. Remark~\ref{rem:gamma_1})
and get 
\begin{align*}
\norm{\dlp'(\psi\chi_{\Gamma\setminus U_\el})}{L^2(\el)}
= \norm{\gint u_{\slp,\el}^\far}{L^2(\el)}
= \norm{\partial_\normal u_{\slp,\el}^\far}{L^2(\el)}
\lesssim \norm{\nabla u_{\slp,\el}^\far}{L^2(\el)}.
\end{align*}
The far-field contribution in~\eqref{eq:invest:Kadj:1}
can therefore be bounded by Proposition~\ref{prop:farfield:V}
\begin{align*}
\sum_{\el\in\mesh_\index}\norm{\wght}{L^\infty(T)}^2\norm{\dlp'(\psi\chi_{\Gamma\setminus U_\el})}{L^2(\el)}^2
&\lesssim \sum_{\el\in\mesh_\index}\norm{\wght\nabla u_{\slp,\el}^\far}{L^2(\el)}^2\\
&\lesssim \norm{\wght\psi}{L^2(\Gamma)}^2 + \norm{\wght/\meshsize^{1/2}}{L^\infty(\Gamma)}^2\norm{\psi}{\H^{-1/2}(\Gamma)}^2.
\end{align*}
Altogether, this gives
\begin{align*}
\norm{\wght\dlp'\psi}{L^2(\Gamma)}
\lesssim \norm{\wght\psi}{L^2(\Gamma)} + \norm{\wght/\meshsize^{1/2}}{L^\infty(\Gamma)}\norm{\psi}{\H^{-1/2}(\Gamma)}.
\qedhere
\end{align*}
\end{proof}

\begin{proof}[Proof of inverse estimate~\eqref{eq:invest:K}] First, we treat the double-layer potential $\dlp$.
Let $v \in \H^1(\Gamma)$, extend $v$ by zero to $v\in H^1(\partial\Omega)$,
and recall the notation from Section~\ref{sec:notation-dlp}.
We recall the stability of $\dlp = \frac{1}{2}+\trace\widetilde\dlp: H^1(\partial\Omega)\rightarrow H^1(\partial\Omega)$, from
which we conclude $\trace\widetilde\dlp v\in H^1(\Gamma)$.
Therefore,
\begin{align}
  \label{prop:invest:K:eq:0}
    \norm{\wght\nabla_\Gamma \dlp v}{L^2(\Gamma)}
     = \norm{\wght\nabla_\Gamma \big(\mbox{$\frac{1}{2}$}+\trace\widetilde\dlp\big) v}{L^2(\Gamma)}
    \leq \frac{1}{2}\norm{\wght\nabla_\Gamma v}{L^2(\Gamma)}
    + \norm{\wght\nabla_\Gamma \gamma_0^\interior u_\dlp}{L^2(\Gamma)}
\end{align}
with $u_\dlp = \widetilde\dlp v$.
There holds $u_\dlp +v_T = u_{\dlp,\el}^\near + u_{\dlp,\el}^\far$ in $\Omega$,
cf.~\eqref{eq:addition of near and far-field contributions of Ktilde}.
For the second term on the right-hand side in~\eqref{prop:invest:K:eq:0}, we obtain 
\begin{align}\label{prop:invest:K:eq:1}
\begin{split}
\norm{\wght\nabla_\Gamma \gamma_0^\interior u_\dlp }{L^2(\Gamma)}^2
& \leq \sum_{\el\in\mesh_\index}\norm{\wght/\meshsize^{1/2}}{L^\infty(T)}^2\norm{\meshsize^{1/2}\nabla_\Gamma \gamma_0^\interior (u_\dlp +v_T)}{L^2(\el)}^2 \\
& \overset{\eqref{eq:addition of near and far-field contributions of Ktilde}}{\lesssim} \sum_{\el\in\mesh_\index}\norm{\wght/\meshsize^{1/2}}{L^\infty(T)}^2\norm{\meshsize^{1/2}\nabla_\Gamma \gamma_0^\interior u_{\dlp,\el}^\near}{L^2(\el)}^2\\
&\qquad + \sum_{\el\in\mesh_\index}\norm{\wght/\meshsize^{1/2}}{L^\infty(T)}^2\norm{\meshsize^{1/2}\nabla_\Gamma \gamma_0^\interior u_{\dlp,\el}^\far}{L^2(\el)}^2.
\end{split}
\end{align}
The first sum can be bounded by Proposition~\ref{lemma:nearfield:K},
whereas the second sum can be bounded by Proposition~\ref{prop:farfield:K}.
Altogether, this yields
\begin{align*}
\norm{\wght\nabla_\Gamma \dlp v}{L^2(\Gamma)}
\lesssim \norm{\wght/\meshsize^{1/2}}{L^\infty(\Gamma)}\norm{v}{\H^{1/2}(\Gamma)} + \norm{\wght\nabla_\Gamma v}{L^2(\Gamma)}
\end{align*}
and concludes the first part of the proof.

The result for the hypersingular integral operator $\hyp$ is shown with similar arguments.
Let again $v \in \H^1(\Gamma)$ and $v_T$ as in Lemma~\ref{lemma:poincare}.
Note that $\hyp v_T=0$.
Splitting now into near-field and far-field yields
\begin{align}
\label{eq:invest:W:1}
\norm{\wght\hyp v}{L^2(\Gamma)}^2
&
= \sum_{T\in\mesh_\index}\norm{\wght\hyp(v-v_T)}{L^2(T)}^2
\nonumber
\\&
\lesssim \sum_{\el\in\mesh_\index} \norm{\wght\hyp((v-v_T)\eta_\el)}{L^2(\el)}^2
+ \sum_{\el\in\mesh_\index}\norm{\wght\hyp((v-v_T)(1-\eta_\el))}{L^2(\el)}^2.
\end{align}
The near-field contribution is bounded by the stability of 
$\hyp:H^1(\partial\Omega)\rightarrow L^2(\partial\Omega)$
stated in \eqref{def:hyp} and 
the Poincar\'e-type estimate \eqref{est:poincare on patch - third est}
\begin{align*}
\norm{\hyp((v-v_T)\eta_\el)}{L^2(\el)}^2
\overset{\eqref{def:hyp}}{\lesssim} \norm{( v-v_T)\eta_\el}{H^1(\omega_\index(\el))}^2
\overset{\eqref{est:poincare on patch - third est}}{\lesssim} \norm{\nabla_\Gamma v}{L^2(\omega_\index(\el))}^2.
\end{align*}
The sum over all elements gives
\begin{align*}
\sum_{\el\in\mesh_\index} \norm{\wght\hyp((v-v_T)\eta_\el)}{L^2(\el)}^2
\lesssim \sum_{\el\in\mesh_\index} \norm{\wght}{L^\infty(T)}^2\norm{\nabla_\Gamma v}{L^2(\omega_\index(\el))}^2
\lesssim \norm{\wght\nabla_\Gamma v}{L^2(\Gamma)}^2.
  \end{align*}
It remains to bound the second term on the right-hand side in~\eqref{eq:invest:W:1}.
In view of the support properties of $\eta_T$, the potential 
$u_{\dlp,T}^\far = \widetilde \dlp ((v - v_T)(1-\eta_T))$
is smooth near $T$ (cf. Lemma~\ref{lemma:caccioppoli:K}) so that 
$\gint u_{\dlp,T}^\far = \partial_{\normal} u_{\dlp,T}^\far$ on $T$. 
Furthermore, since 
$\hyp=-\gamma_1^\interior\widetilde\dlp$ we see
\begin{align*}
\norm{\hyp((v-v_T)(1-\eta_\el))}{L^2(\el)}^2
= \norm{\gint u_{\dlp,\el}^\far}{L^2(\el)}^2
= \norm{\partial_\normal u_{\dlp,\el}^\far}{L^2(\el)}^2
\leq \norm{\nabla u_{\dlp,\el}^\far}{L^2(\el)}^2.
  \end{align*}
We use Proposition~\ref{prop:farfield:K} to conclude
\begin{align*}
\sum_{\el\in\mesh_\index}\norm{\wght\hyp((v-v_T)(1-\eta_\el))}{L^2(\el)}^2
&
\leq \sum_{\el\in\mesh_\index}\norm{\wght\nabla u_{\dlp,\el}^\far}{L^2(\el)}^2
\\ &
\lesssim \norm{\wght\nabla_\Gamma v}{L^2(\Gamma)}^2
+ \norm{\wght/\meshsize^{1/2}}{L^\infty(\Gamma)}^2\norm{v}{\H^{1/2}(\Gamma)}^2.
  \end{align*}
Altogether, we obtain
\begin{align*}
\norm{\wght\hyp v}{L^2(\Gamma)}
&\lesssim \norm{\wght\nabla_\Gamma v}{L^2(\Gamma)}
+ \norm{\wght/\meshsize^{1/2}}{L^\infty(\Gamma)}\norm{v}{\H^{1/2}(\Gamma)}. 
\qedhere
  \end{align*}
\end{proof}
%
%

\bigskip
\noindent
\textbf{Acknowledgement.}
The authors are supported by the Austrian Science Fund (FWF) under grant
P27005 (MF, DP) as well as through the FWF doctoral school W1245 (MF, JMM, DP).
The are also supported by CONICYT through FONDECYT projects 3150012 (TF)
and 3140614 (MK).

\appendix
\section{}

\begin{lemma}
\label{lemma:hpinvest-beweis}
Let $\mesh_\index$ be a regular, $\kappa$-shape regular triangulation of $\Gamma$. 
Suppose that $d\ge 2$ and
that $q_\index$ is a $\sigma$-admissible polynomial degree 
distribution with respect to $\TT_\index$. 
Then, there exists a constant $\setc{invtilde}>0$, which depends solely on $\partial\Omega$, 
the $\kappa$-shape regularity of $\mesh_\index$, and $\sigma$, such that 
  \begin{align}
    \label{eq:foo-1000}
    \norm{\meshsize^{1/2}(q_\index+1)^{-1}
    \,\Psi_\index}{L^2(\Gamma)}
    &\leq \setc{invtilde} \norm{\Psi_\index}{H^{-1/2}(\Gamma)}
    \quad\text{for all }\Psi_\index\in\PP^\q(\mesh_\index). 
  \end{align}
\end{lemma}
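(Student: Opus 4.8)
The plan is to prove \eqref{eq:foo-1000} by a duality argument whose test function is a \emph{degree-adapted polynomial bubble}, estimating the $\H^{1/2}(\Gamma)$-norm of that bubble directly via the Aronstein--Slobodeckii characterisation of Facts~\ref{facts:sobolev}\eqref{item:facts:sobolev-ii}. The ingredients on the reference simplex $\Tref$ (for polynomials $\hat P$ of degree $\le q$, with constants depending only on $d$) are: the polynomial inverse estimates $\norm{\nabla\hat P}{L^2(\Tref)}\lesssim(q+1)^2\norm{\hat P}{L^2(\Tref)}$ and its half-order form $\norm{\hat P}{H^{1/2}(\Tref)}\lesssim(q+1)\norm{\hat P}{L^2(\Tref)}$ (obtained by interpolation on the \emph{fixed} domain $\Tref$, which is unproblematic); a bubble $\hat b_q$ of degree $\lesssim q$ with $0\le\hat b_q\le1$, $\hat b_q|_{\partial\Tref}=0$, $\norm{\nabla\hat b_q}{L^\infty(\Tref)}\lesssim(q+1)^2$, and $\hat b_q\ge\tfrac12$ outside a boundary strip $S_q\subseteq\Tref$ of width $\lesssim(q+1)^{-2}$; and the classical polynomial strip estimate $\norm{\hat P}{L^2(S_q)}^2\le\tfrac12\norm{\hat P}{L^2(\Tref)}^2$ for a sufficiently narrow such strip.

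First I would put $c_T:=\meshsize(T)(q_\index(T)+1)^{-2}$ and define $v$ on $\Gamma$ by $v|_T:=c_T\,(\hat b_{q_\index(T)}\circ\gamma_T^{-1})\cdot\Psi_\index|_T$ for $T\in\mesh_\index$. Since $\Psi_\index|_T\circ\gamma_T$ is a polynomial and every bubble vanishes on all of $\partial\Tref$, $v$ is a continuous piecewise polynomial that vanishes on $\partial\Gamma$, hence $v\in\H^1(\Gamma)\subseteq\H^{1/2}(\Gamma)$; this vanishing at $\partial\Gamma$ is exactly what accommodates open $\Gamma$, since it makes $v$ an admissible test function for the sharp norm $\norm{\cdot}{H^{-1/2}(\Gamma)}=\norm{\cdot}{\H^{1/2}(\Gamma)'}$. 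Changing variables on each element (with $\kappa$-shape regularity controlling $\sqrt{\det G_T}\simeq\meshsize(T)^{d-1}$), using $\hat b_q\ge\tfrac12$ off $S_q$ and the strip estimate gives
\[
\langle\Psi_\index,v\rangle_\Gamma=\sum_{T\in\mesh_\index}c_T\int_T(\hat b_{q_\index(T)}\circ\gamma_T^{-1})\,|\Psi_\index|^2\;\gtrsim\;\sum_{T\in\mesh_\index}c_T\norm{\Psi_\index}{L^2(T)}^2=\norm{\meshsize^{1/2}(q_\index+1)^{-1}\Psi_\index}{L^2(\Gamma)}^2.
\]

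The heart of the argument is the matching bound $\norm{v}{\H^{1/2}(\Gamma)}\lesssim\norm{\meshsize^{1/2}(q_\index+1)^{-1}\Psi_\index}{L^2(\Gamma)}$. Passing (through the bi-Lipschitz charts of $\partial\Omega$) to the Slobodeckii double integral of $E_{0,\Gamma}v$, I would split $\Gamma\times\Gamma$ into: \emph{diagonal} pairs $T\times T$, where scaling to $\Tref$ and $\norm{\hat b_q\hat\Psi}{H^{1/2}(\Tref)}\lesssim(q+1)\norm{\hat\Psi}{L^2(\Tref)}$ (with $\hat\Psi:=\Psi_\index|_T\circ\gamma_T$) give $\seminorm{v}{H^{1/2}(T)}^2\lesssim c_T^2\meshsize(T)^{-1}(q_\index(T)+1)^2\norm{\Psi_\index}{L^2(T)}^2=\meshsize(T)(q_\index(T)+1)^{-2}\norm{\Psi_\index}{L^2(T)}^2$; \emph{neighbouring} pairs ($\overline T\cap\overline{T'}\ne\emptyset$), where, because all bubbles vanish on element boundaries, $v$ is genuinely $H^1$ on the two-element patch, so the interpolation inequality $\seminorm{\cdot}{H^{1/2}}^2\lesssim\norm{\cdot}{L^2}\norm{\cdot}{H^1}$, together with $\norm{\nabla v}{L^2(T)}\lesssim c_T\meshsize(T)^{-1}(q_\index(T)+1)^2\norm{\Psi_\index}{L^2(T)}=\norm{\Psi_\index}{L^2(T)}$, the comparabilities $\meshsize(T)\simeq\meshsize(T')$, $q_\index(T)\simeq q_\index(T')$, and the bounded patch size (Lemma~\ref{lemma:patch}), yields the same bound after a short Young-inequality computation; and \emph{far} pairs ($\overline T\cap\overline{T'}=\emptyset$), where $\kappa$-shape regularity and Lemma~\ref{lemma:patch} give $\operatorname{dist}(T,T')\gtrsim\meshsize(T)$, hence $\int_{y\text{ far from }T}|x-y|^{-d}\,dy\lesssim\meshsize(T)^{-1}$ for $x\in T$, so that after $|v_T(x)-v_{T'}(y)|^2\le2|v_T(x)|^2+2|v_{T'}(y)|^2$ this contribution is $\lesssim\sum_T\meshsize(T)^{-1}c_T^2\norm{\Psi_\index}{L^2(T)}^2=\sum_T\meshsize(T)(q_\index(T)+1)^{-4}\norm{\Psi_\index}{L^2(T)}^2$. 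Adding $\norm{v}{L^2(\Gamma)}^2\le\sum_Tc_T^2\norm{\Psi_\index}{L^2(T)}^2$ and using $c_T\lesssim1$, $q_\index(T)+1\ge1$, all pieces are dominated by $\norm{\meshsize^{1/2}(q_\index+1)^{-1}\Psi_\index}{L^2(\Gamma)}^2$. Combining the two bounds with $\langle\Psi_\index,v\rangle_\Gamma\le\norm{\Psi_\index}{H^{-1/2}(\Gamma)}\norm{v}{\H^{1/2}(\Gamma)}$ and dividing yields \eqref{eq:foo-1000}.

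The hardest part will be assembling the $\H^{1/2}(\Gamma)$-estimate: constructing a single bubble $\hat b_q$ with \emph{all} the listed properties at once, and transporting the element-local bumps through the bi-Lipschitz charts so that the chart-based Slobodeckii norm is provably equivalent to $\norm{\cdot}{\H^{1/2}(\Gamma)}$; the geometric input $\operatorname{dist}(T,T')\gtrsim\meshsize(T)$ for non-touching elements also has to be read off from $\kappa$-shape regularity and the patch structure of Lemma~\ref{lemma:patch} (every element meeting $\overline T$ has size $\simeq\meshsize(T)$, so a non-touching element lies at distance $\gtrsim\meshsize(T)$). For the pure $h$-version one may instead interpolate the trivial identity $\norm{\Psi_\index}{L^2(\Gamma)}=\norm{\Psi_\index}{L^2(\Gamma)}$ with the $H^{-1}(\Gamma)$-endpoint $\norm{\meshsize(q_\index+1)^{-2}\Psi_\index}{L^2(\Gamma)}\lesssim\norm{\Psi_\index}{H^{-1}(\Gamma)}$, which the same bubble construction delivers using only $\norm{\cdot}{\H^1(\Gamma)}^2\simeq\norm{\cdot}{L^2(\Gamma)}^2+\norm{\nabla_\Gamma\cdot}{L^2(\Gamma)}^2$ and no Slobodeckii norms; but that route then needs the usual care about operator domains in interpolation, which the direct argument above circumvents.
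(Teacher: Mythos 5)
Your argument is essentially correct and reaches the same conclusion by a genuinely different route in the key step. Both proofs proceed by duality with a degree-adapted test function $v=\sum_T v_T$ built from $\Psi_\index$ by multiplying with a boundary-vanishing cutoff scaled by $h(T)(q_\index(T)+1)^{-2}$, and both require the polynomial strip estimate to control $\langle\Psi_\index,v\rangle$ from below. Where you diverge is in the bound $\|v\|_{\H^{1/2}(\Gamma)}\lesssim\|\meshsize^{1/2}(q_\index+1)^{-1}\Psi_\index\|_{L^2(\Gamma)}$: the paper defines $v_T$ with a \emph{mollified} cutoff whose support lies a distance $\sim h(T)/(q+1)^2$ inside $T$, so the $v_T$ have pairwise disjoint supports, and then invokes the locality/superadditivity lemma \cite[Lemma~{4.1.49}]{ss} to get $\|\sum_T v_T\|^2_{\H^{1/2}}\lesssim\sum_T\|v_T\|^2_{\H^{1/2}}$, with each summand handled by $L^2$--$H^1$ interpolation on the fixed domain $\partial\Omega$. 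You instead use a polynomial bubble vanishing only \emph{on} $\partial\Tref$ (so neighbouring $v_T$ meet), which rules out the superadditivity citation, and estimate the Aronstein--Slobodeckii double integral directly by the diagonal/neighbouring/far decomposition. That route is more self-contained (no appeal to the Sauter--Schwab locality lemma) at the cost of the three-way case analysis, a scaled interpolation inequality on variable patches, and the geometric fact $\operatorname{dist}(T,\Gamma\setminus\omega_\index(T))\gtrsim h(T)$ — the last of which is the same property the paper assumes in \eqref{UT:patch}, so it is available. Two small things you flagged as the hard part deserve one more sentence each: the existence of a degree-$\lesssim q$ bubble with gradient $\lesssim(q+1)^2$ that is $\ge 1/2$ outside a strip of width $\sim(q+1)^{-2}$ is at the Bernstein/Markov limit but is achievable with Chebyshev-type profiles (or one can simply adopt the paper's mollified cutoff and replace $\|\hat b_q\hat\Psi\|_{H^{1/2}}\lesssim(q+1)\|\hat\Psi\|_{L^2}$ by the same bound via $L^2$--$H^1$ interpolation of the product, which needs no polynomial structure of the bubble); and the extra Slobodeckii contributions coming from $\Gamma\times(\partial\Omega\setminus\overline\Gamma)$ after zero-extension are handled exactly like neighbouring/far pairs because the bubbles kill $v$ on $\partial\Gamma$, as you noted.
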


\begin{proof} 
For each $T \in \mesh_\index$, let $\widehat \chi_{T,q_h(T)}$ be a smooth function on $\Tref$ 
with the following properties for some fixed $\delta > 0$
(see, e.g., the proofs of \cite[Lem.~{3.7}, Prop.~{3.8}]{georgoulis} or the arguments below):
\begin{align}
\label{eq:cor:invest-5}
&\operatorname*{supp} \widehat \chi_{T,q_h(T)} 
  \subseteq \{x \in \Tref\colon \operatorname*{dist}(x,\partial\Tref) > \delta /(q_h(T)+1)^2\}, \\
&0 \leq \widehat \chi_{T,q_h(T)} \leq 1 \quad \mbox{ in $\Tref$}, 
\qquad \|\nabla \widehat \chi_{T,q_h(T)}\|_{L^\infty(\Tref)} \lesssim (q_h(T)+1)^{-2},  \\
&\widehat \chi_{T,q_h(T)} \equiv 1 \quad \mbox{ in }
  \{x \in \Tref\colon \operatorname*{dist}(x,\partial\Tref) > 3\delta /(q_h(T)+1)^2\}, \\
\label{eq:cor:invest-10}
&\|\pi\|_{L^2(\Tref)} \leq C \|\pi \widehat \chi_{T,p(T)}\|_{L^2(\Tref)} 
\quad \mbox{$\forall$ polynomials $\pi$ of degree $q_h(T)$}, \\ 
\label{eq:cor:invest-20}
&\|\pi \widehat \chi_{T,q_h(T)}\|_{H^1(\Tref)} \leq C (1+q_h(T))^{2} \|\pi\|_{L^2(\Tref)}
\quad \mbox{$\forall$ polynomials $\pi$ of degree $q_h(T)$}.  
\end{align}
$\widehat \chi_{T,q_h(T)}$ is obtained from a mollification of the characteristic function of 
$\Tref \setminus S_{2\delta/(q_h(T)+1)^2}$, where 
$S_\varepsilon:= \{x \in \Tref \colon \operatorname*{dist}(x,\partial\Tref) < \varepsilon\}$. 
The parameter $\delta > 0$ is dictated by the requirement \eqref{eq:cor:invest-10}. 
For this, we use the shorthand $\varepsilon(\delta) = 3 \delta /(q_h(T)+1)^2$ and observe that 
we assume $\widehat \chi_{T,q_h(T)} \equiv 1$ on $\Tref\setminus S_{\varepsilon(\delta)}$
so that we are done once we have established 
$\|\pi\|_{L^2(S_{\varepsilon(\delta)})} \lesssim \|\pi\|_{L^2(\Tref\setminus S_{\varepsilon(\delta)})}$. 
\cite[Lemma~{2.1}]{li-melenk-wohlmuth-zou10} and the polynomial inverse 
estimate $\|\pi\|_{H^1(\Tref)} \lesssim (q_h(T)+1)^2 \|\pi\|_{L^2(\Tref)}$, yield 
\begin{align*}
\|\pi\|^2_{L^2(S_{\varepsilon(\delta)})} &\lesssim 
\varepsilon(\delta) \|\pi\|_{L^2(\Tref)} \|\pi\|_{H^1(\Tref)}
\lesssim \varepsilon(\delta) (q_h(T)+1)^2 \|\pi\|^2_{L^2(\Tref)}   \\
&= \varepsilon(\delta) (q_h(T)+1)^2 \left[ \|\pi\|^2_{L^2(\Tref\setminus S_{\varepsilon(\delta)})} 
+ \|\pi\|^2_{L^2(S_{\varepsilon(\delta)})}\right]. 
\end{align*}
Taking $\delta$ sufficiently small produces 
$\|\pi\|_{L^2(S_{\varepsilon(\delta)})} \lesssim \|\pi\|_{L^2(\Tref\setminus S_{\varepsilon(\delta)})}$ as desired.
\newline 
Define $\chi_{T,q_h(T)}$ with $\operatorname*{supp} \chi_{T,q_h(T)} \subseteq T$ by 
$\chi_{T,q_h(T)} \circ \gamma_T = \widehat \chi_{T,q_h(T)}$. 
Given $\Psi_\index\in\PP^\q(\mesh_\index)$, define $\H^1(\Gamma) \ni v 
:= \sum_{T \in \mesh_\index} v_T$ in an elementwise fashion by requiring $\operatorname*{supp} v_T \subseteq T$ and 
\begin{equation}
\label{eq:cor:invest-30}
v_T|_T := \frac{h(T)}{(1+q_h(T))^2}(\Psi_\index|_{T}) \chi_{T,q_h(T)}, 
\end{equation}
Note that $v_T \in \H^1(\Gamma)$ by the support properties
of $\chi_{T,q_h(T)}$. 
An interpolation inequality and the estimate \eqref{eq:cor:invest-20} on the reference element give 
\begin{align*}
\|v_T \|^2_{\H^{1/2}(\Gamma)} &\leq
\|v_T \|^2_{\H^{1/2}(\partial\Omega)} 
\lesssim \|v_T\|_{L^2(\partial\Omega)} \|v_T\|_{H^1(\partial\Omega)}
 = \|v_T\|_{L^2(T)} \|v_T\|_{H^1(T)}  \\
&\stackrel{\eqref{eq:cor:invest-20}, \eqref{eq:cor:invest-10}}{\lesssim} \frac{(1+q_h(T))^2}{h(T)}\|v_T\|^2_{L^2(T)}. 
\end{align*}
This implies 
\begin{equation}
\label{eq:cor:invest-50}
\sum_{T \in \mesh_\index} \|v_T\|^2_{\H^{1/2}(\Gamma)} \lesssim \left\| \frac{q_h+1}{h^{1/2}} v\right \|^2_{L^2(\Gamma)}.
\end{equation}
From \cite[Lemma~{4.1.49}]{ss} we get from \eqref{eq:cor:invest-50}
\begin{equation}
\label{eq:cor:invest-60} 
\|v\|^2_{\H^{1/2}(\Gamma)} = \left\|\sum_{T \in \mesh_\index} v_T\right\|^2_{\H^{1/2}(\Gamma)} 
\lesssim \sum_{T \in \mesh_\index} \|v_T\|^2_{\H^{1/2}(\Gamma)}  
\lesssim \left\| \frac{1+q_h}{h^{1/2}} v\right\|^2_{L^2(\Gamma)}. 
\end{equation}
Finally, we estimate 
\begin{align*}
\left\|\frac{h^{1/2}}{1+q_h} \Psi_h\right\|^2_{L^2(\Gamma)} &= 
\sum_{T \in \mesh_\index} 
\left\|\frac{h(T)^{1/2}}{1+q_h(T)} \Psi_h\right\|^2_{L^2(T)} 
\stackrel{\eqref{eq:cor:invest-10}}{\lesssim}
\sum_{T \in \mesh_\index} 
\left\|\frac{h(T)^{1/2}}{1+q_h(T)} \chi_{T,q_h(T)} \Psi_h\right\|^2_{L^2(T)} \\
&
\stackrel{\eqref{eq:cor:invest-5}}{\lesssim}
\sum_{T \in \mesh_\index} 
\|\frac{h(T)^{1/2}}{1+q_h(T)} \sqrt{\chi_{T,q_h(T)}} \Psi_h\|^2_{L^2(T)} 
 = 
\sum_{T \in \mesh_\index} (v_T,\Psi_h)_{L^2(T)}
 = (v,\Psi_h)_{L^2(\Gamma)} \\
& \leq \|\Psi_h\|_{H^{-1/2}(\Gamma)} \|v\|_{\H^{1/2}(\Gamma)}
\stackrel{\eqref{eq:cor:invest-60} }{\lesssim}
\|\Psi_h\|_{H^{-1/2}(\Gamma)} \left\|\frac{1+q_h}{h^{1/2}} v\right\|_{L^{2}(\Gamma)} \\
&
\stackrel {\eqref{eq:cor:invest-30}} {\lesssim}
\|\Psi_h\|_{H^{-1/2}(\Gamma)} \left\|\frac{h^{1/2}}{1+q_h} \Psi_h\right\|_{L^{2}(\Gamma)}. 
\qedhere
\end{align*}
\end{proof}
\bibliographystyle{alpha}
\bibliography{literature}

\end{document}